\newtheorem{theorem}{Theorem}
\theoremstyle{plain}
\newtheorem{corollary}{Corollary}
\newtheorem{definition}{Definition}
\newtheorem{example}{Example}
\newtheorem{lemma}{Lemma}
\newtheorem{proposition}{Proposition}
\numberwithin{equation}{section}
\begin{document}
\begin{center}
\vspace*{1.3cm}

\textbf{MINIMIZERS OF GERSTEWITZ FUNCTIONALS}

\bigskip

by

\bigskip

PETRA WEIDNER\footnote{HAWK Hochschule f\"ur angewandte Wissenschaft und Kunst Hildesheim/\-Holzminden/ G\"ottingen University of Applied Sciences and Arts, Faculty of Natural Sciences and Technology,
D-37085 G\"ottingen, Germany, {petra.weidner@hawk-hhg.de}.}

\bigskip
\bigskip
Research Report \\ 
April 25, 2017

\end{center}

\bigskip
\bigskip

\noindent{\small {\textbf{Abstract:}}
Scalarization in vector optimization is essentially based on the minimization of Gerstewitz functionals. In this paper, the minimizer sets of Gerstewitz functionals are investigated. Conditions are given under which such a set is nonempty and compact. Interdependencies between solutions of problems with different parameters or with different feasible point sets are shown. Consequences for the parameter control in scalarization methods are derived. It is pointed out that the minimization of Gerstewitz functionals is equivalent to an optimization problem which generalizes the scalarization by Pascoletti and Serafini.
}

\bigskip

\noindent{\small {\textbf{Keywords:}} 
nonlinear programming; vector optimization; scalarization; optimization in general spaces
}

\bigskip

\noindent{\small {\textbf{Mathematics Subject Classification (2010): }
90C30, 90C48, 90C29}}

\section{Introduction}

Gerstewitz functionals were introduced by Gerstewitz (later Gerth, now Tammer) in the context of vector optimization \cite{ger85}. They have been investigated in \cite{GerWei90} and \cite{Wei90}, later followed by
\cite{GopRiaTamZal:03}, \cite{TamZal10} and \cite{DT}. Necessary and sufficient conditions for its basic properties under more general assumptions are given in \cite{Wei17a} and \cite{Wei17b}. In \cite{Wei17b}, it is shown that Gerstewitz functionals can represent orders, preference relations and other binary relations and thus act as a general tool for scalarization. This tool is used in multicriteria optimization, decision theory, mathematical finance, production theory and operator theory. In many cases, it is not obvious or not known that the function which is minimized is a Gerstewitz functional. A complete characterization of solutions in vector optimization by minimizers of Gerstewitz functionals has been given in \cite{Wei16b} and \cite{Wei16c}. 

The original version of the results presented in this paper was developed by the author in \cite{Wei90}. It has been extended using the properties of Gerstewitz functionals proved in \cite{Wei17a} and \cite{Wei17b}. 

In Section \ref{sec2}, we will give the basic definitions and some preliminaries.
Section \ref{sec3} contains the formulation of the problem and connects it with a generalization of an optimization problem introduced by Pascoletti and Serafini \cite{pase84}.
Moreover, relationships to problems with an altered feasible point set are proved.
In Section \ref{sec4}, the existence of optimal solutions and properties of the solution set are studied.
Section \ref{sec5} deals with the question for which varying parameters the optimization problems have the same solution set. Finally, further interdependencies between solution sets for different parameters are shown in Section \ref{sec6}.  

\section{Preliminaries}\label{sec2}

Throughout this paper, $Y$ is assumed to be a real topological vector space.

From now on, $\mathbb{R}$ and $\mathbb{N}$ will denote the set of real numbers and of nonnegative integers, respectively.
We define 
$\mathbb{N}_{>}$ as the set of positive integers,
$\mathbb{R}_{+}:=\{x\in\mathbb{R} \mid x\geq 0\}$, $\mathbb{R}_{>}:=\{x\in\mathbb{R} \mid x > 0\}$,
$\mathbb{R}_{+}^n:=\{(x_1,\ldots ,x_n)^T\in\mathbb{R}^n\mid x_i\geq 0 \,\forall i\in\{1,\ldots, n\}\}$ for each $n\in\mathbb{N}_>$.  $\overline{\mathbb{R}}:=\mathbb{R}\cup \{-\infty ,+\infty\}$ denotes the extended real-valued set.
A set $C$ in $Y$ is said to be a cone iff $\lambda c\in C \mbox{ for all } \lambda\in\mathbb{R}_{+}, c\in C.$ The cone $C$ is called nontrivial iff $C\not=\emptyset$, $C\not=\{0\}$ and $C\not= Y$ hold. It is said to be pointed iff $C\cap (-C)=\{0\} .$ 
Let $A$ be a subset of $Y$.
$0^+A:=\{u\in Y   \mid  A+\mathbb{R}_{+}u\subseteq A\}$ denotes the recession cone of $A$.
$\operatorname*{{core}}A$ stands for the algebraic interior of $A$. 
$\operatorname*{cl}A$, $\operatorname*{int}A$, $\operatorname*{bd}A$ and $\operatorname*{conv}A$ denote the closure, the interior, the boundary and the convex hull, respectively, of $A$.
Given some set $B\subseteq\mathbb{R}$, $d\in Y$, and $D\subseteq Y$, we write $B d:=\{b \cdot d \mid b\in B\}$ and $B D:=\{b \cdot d \mid b\in B, d\in D\}$.
Consider a functional $\varphi :Y\to \overline{\mathbb{R}}$. We use its effective domain 
$\operatorname*{dom}\varphi :=\{y\in Y \mid \varphi(y)\in\mathbb{R}\cup \{-\infty\}\}$.
For some binary relation $\mathcal{R}$ given on $\overline{\mathbb{R}}$, the sets $\operatorname*{lev}_{\varphi,\mathcal{R}}(t):= \{y\in Y  \mid \varphi (y) \mathcal{R} t\}$ are defined for $t\in\mathbb{R}$. In this way, the sublevel sets of $\varphi $ are given as $\operatorname*{lev}_{\varphi,\leq}(t)$.
$\varphi$ is said to be finite-valued on $A$ iff it attains only real values on $A$. It is called finite-valued iff it is finite-valued on $Y$. $\varphi$ is said to be proper iff $\operatorname*{dom}\varphi\not=\emptyset$ and $\varphi$ is finite-valued on $\operatorname*{dom}\varphi .$ According to the rules of convex analysis, $\operatorname*{inf}\emptyset =+\infty .$

From now on, we suppose in this section\\
$(H1_{A,k})$: $A$ is a proper closed subset of $\,Y$ and $k\in -0^+A\setminus \{0\}$.
 
\begin{definition}
The function $\varphi _{A,k}:Y\rightarrow \overline{{\mathbb{R}}}$ defined by
\begin{equation*}
\varphi_{A,k} (y):= \inf \{t\in
{\mathbb{R}} \mid y\in A+tk\} 
\end{equation*}
is called a Gerstewitz functional.
\end{definition}

We will use the following statements from \cite{Wei17a} and \cite{Wei17b}.

\begin{lemma}
\label{t251}
$\varphi _{A,k}$ is lower semicontinuous on $\operatorname*{dom}\varphi _{A,k}$,
\begin{eqnarray}
\operatorname*{dom}\varphi _{A,k} & = & A+\mathbb{R}k,\nonumber\\
\operatorname*{lev}\nolimits_{\varphi_{A,k},\leq}(t)& = & A+t k \quad \forall\, t \in {\mathbb{R}},\label{f-r252n}\\
\operatorname*{lev}\nolimits_{\varphi_{A,k},=}(t) & \subseteq & \operatorname*{bd}A + t k \quad \forall\, t \in\mathbb{R}.\label{r0} 
\end{eqnarray} 
Moreover,
\begin{itemize}
\item[(a)] $\varphi _{A,k}(y)=-\infty \iff y+\mathbb{R}k\subseteq A$.
\item[(b)] $\varphi _{A,k}$ is finite-valued on $\operatorname*{dom}\varphi _{A,k}\setminus A$. 
\item[(c)] $\varphi _{A,k} $ is proper if and only if
$A$ does not contain lines parallel to $k$, i.e.,
\begin{equation*}
\forall\, a\in A:\; a+\mathbb{R}k\not\subseteq A.
\end{equation*}
\item[(d)]If $k\in -\operatorname*{core}0^+A$, then $\varphi _{A, k}$ is finite-valued.
\item[(e)] If $k\in (-0^+A)\cap 0^+A$, then $\varphi_{A,k}$ does not attain any real value.
\item[(f)] If $k\in (-0^+A)\setminus 0^+A$ and $A$ is convex, then $\varphi_{A,k}$ is proper.
\end{itemize}
\end{lemma}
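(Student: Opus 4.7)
The plan is to reduce every claim to the monotonicity of the set
$T(y):=\{t\in\mathbb{R}\mid y-tk\in A\}$. Since $-k\in 0^{+}A$, if $t\in T(y)$ and $s\ge t$, then $y-sk=(y-tk)-(s-t)k\in A+\mathbb{R}_{+}(-k)\subseteq A$, so $T(y)$ is upward-closed. A priori $T(y)$ is therefore one of $\emptyset$, $\mathbb{R}$, $[t_{0},+\infty)$, or $(t_{0},+\infty)$, and $\varphi_{A,k}(y)=\inf T(y)$ lies in $\{+\infty\}\cup\mathbb{R}\cup\{-\infty\}$ accordingly. From this, $y\in\operatorname{dom}\varphi_{A,k}$ iff $T(y)\neq\emptyset$ iff $y\in A+\mathbb{R}k$, which gives the first displayed identity.

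For the sublevel formula, the inclusion $A+tk\subseteq\operatorname{lev}_{\varphi_{A,k},\le}(t)$ is immediate, and conversely $\varphi_{A,k}(y)\le t$ furnishes, for every $\varepsilon>0$, some $s\in T(y)$ with $s<t+\varepsilon$, hence $t+\varepsilon\in T(y)$ by monotonicity; since $A$ is closed, letting $\varepsilon\to 0^{+}$ yields $y-tk\in A$. This simultaneously proves \eqref{f-r252n} and lower semicontinuity (closed sublevel sets). It also shows that the half-open case $(t_{0},+\infty)$ cannot occur, so the infimum is attained whenever it is finite. For \eqref{r0}, if $\varphi_{A,k}(y)=t$ and $y-tk\in\operatorname{int}A$, then $(y-tk)+\varepsilon k\in A$ for small $\varepsilon>0$, contradicting $\inf T(y)=t$.

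For (a), $\varphi_{A,k}(y)=-\infty$ iff $T(y)=\mathbb{R}$ iff $y+\mathbb{R}k\subseteq A$. For (b), if $y\in\operatorname{dom}\varphi_{A,k}\setminus A$ then $0\notin T(y)$, so $\varphi_{A,k}(y)>0>-\infty$, and $\varphi_{A,k}(y)<+\infty$ by hypothesis. Part (c) then follows from (a), (b), and $\emptyset\neq A\subseteq\operatorname{dom}\varphi_{A,k}$. For (e), $\pm k\in 0^{+}A$ gives $A+\mathbb{R}k=A$, so $T(y)\in\{\emptyset,\mathbb{R}\}$ and only $\pm\infty$ is attained. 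For (f), the closedness and convexity of $A$ provide the representation $0^{+}A=\{u\in Y:a+\mathbb{R}_{+}u\subseteq A\}$ for any fixed $a\in A$; hence if some $a\in A$ satisfied $a+\mathbb{R}k\subseteq A$, then in particular $a+\mathbb{R}_{+}k\subseteq A$ would force $k\in 0^{+}A$, contradicting the hypothesis, and (c) delivers properness.

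The main obstacle is (d). Assuming $-k\in\operatorname{core}0^{+}A$ and fixing $a_{0}\in A$, the core property applied to the direction $y-a_{0}$ supplies $\lambda>0$ with $-k+\lambda(y-a_{0})\in 0^{+}A$; then $a_{0}+\tfrac{1}{\lambda}(-k+\lambda(y-a_{0}))=y-\tfrac{k}{\lambda}\in A$, yielding $\varphi_{A,k}(y)\le 1/\lambda<+\infty$. To rule out $\varphi_{A,k}(y)=-\infty$, I would suppose $y+\mathbb{R}k\subseteq A$ and argue that for arbitrary $z\in Y$ the core property applied at $y$ in direction $z-y$ gives $\lambda>0$ such that, for every $t\in\mathbb{R}$,
\[
(y+tk)+\tfrac{1}{\lambda}\bigl(-k+\lambda(z-y)\bigr)=z+\bigl(t-\tfrac{1}{\lambda}\bigr)k\in A,
\]
whence $z+\mathbb{R}k\subseteq A$; by (a) this forces $\varphi_{A,k}\equiv-\infty$, hence $A=Y$, contradicting that $A$ is a proper subset. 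The delicate point of the lemma as a whole is precisely this combined use of the algebraic-interior property, the closedness of $A$, and the properness of $A$ to eliminate the value $-\infty$ in (d).
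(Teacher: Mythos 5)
Your proposal is correct, and it is worth noting that the paper itself gives no proof of this lemma at all: it is imported verbatim from \cite{Wei17a} and \cite{Wei17b}, so there is no in-paper argument to compare against. Your self-contained route --- reducing everything to the upward-closedness of $T(y)=\{t\in\mathbb{R}\mid y-tk\in A\}$, which follows from $-k\in 0^{+}A$ --- is efficient and sound: it yields $\operatorname*{dom}\varphi_{A,k}=A+\mathbb{R}k$, the sublevel identity \eqref{f-r252n} (and with it closedness of the sublevel sets, hence lower semicontinuity, and attainment of finite infima), the boundary inclusion \eqref{r0}, and parts (a), (b), (c), (e) essentially by inspection of which of the four possible shapes $T(y)$ can take. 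The two places where you lean on facts you should at least name explicitly are: in (f), the characterization $0^{+}A=\{u\mid a+\mathbb{R}_{+}u\subseteq A\}$ for a fixed $a\in A$, which does hold for closed convex sets in an arbitrary topological vector space (by the usual limit $(1-\lambda)b+\lambda a+tu\to b+tu$ as $\lambda\to 0^{+}$ together with closedness), and in (d), the core (algebraic interior) property in the form ``for every direction $v$ there is $\lambda>0$ with $-k+\lambda v\in 0^{+}A$,'' which is exactly the definition. Your treatment of (d) is the genuinely nontrivial part and is handled correctly: the upper bound $\varphi_{A,k}(y)\le 1/\lambda$ comes from aiming the core perturbation at $y-a_{0}$, and the exclusion of $-\infty$ comes from showing that a single full line $y+\mathbb{R}k\subseteq A$ would propagate to $z+\mathbb{R}k\subseteq A$ for every $z$, forcing $A=Y$ and contradicting properness of $A$ as a subset. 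One cosmetic point: in (b) the inequality $\varphi_{A,k}(y)>0$ requires your attainment observation (otherwise only $\ge 0$ is immediate), but either version suffices for finiteness.
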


\begin{lemma}\label{l-min-bd}
We have
\begin{equation*}
\varphi  _{A,k}(y)= \min \{t\in {\mathbb{R}} \mid y\in A+tk \}= \min \{t\in {\mathbb{R}} \mid y\in \operatorname*{bd}A+tk \} 
\end{equation*}
for each $y\in Y$ with $\varphi  _{A,k}(y)\in\mathbb{R}$.
\end{lemma}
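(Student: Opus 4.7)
The plan is to read off the two equalities directly from the level-set identities already recorded in Lemma \ref{t251}. Fix $y \in Y$ with $\varphi_{A,k}(y) = t_0 \in \mathbb{R}$. The set whose infimum defines $\varphi_{A,k}(y)$ is $S(y) := \{t \in \mathbb{R} \mid y \in A + tk\}$, which by \eqref{f-r252n} is exactly $\{t \in \mathbb{R} \mid y \in \operatorname*{lev}_{\varphi_{A,k},\leq}(t)\} = \{t \in \mathbb{R} \mid \varphi_{A,k}(y)\leq t\} = [t_0,\infty)$. In particular $t_0 \in S(y)$, so $t_0$ attains the infimum and $\varphi_{A,k}(y) = \min S(y)$, giving the first equality.

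For the second equality, note that $\varphi_{A,k}(y)=t_0$ places $y$ in $\operatorname*{lev}_{\varphi_{A,k},=}(t_0)$, so the inclusion \eqref{r0} yields $y \in \operatorname*{bd}A + t_0 k$. Thus $t_0$ belongs to $T(y) := \{t \in \mathbb{R} \mid y \in \operatorname*{bd}A + tk\}$. Conversely, since $A$ is closed by $(H1_{A,k})$, we have $\operatorname*{bd}A \subseteq A$, whence $T(y) \subseteq S(y) = [t_0,\infty)$. Combining these two observations, $t_0 = \min T(y)$, which is the second equality.

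I do not expect a genuine obstacle here: everything reduces to the level-set descriptions \eqref{f-r252n} and \eqref{r0} together with the closedness of $A$. The only step that might look suspicious at first glance is the assertion $\operatorname*{bd}A \subseteq A$, so I would make the appeal to $(H1_{A,k})$ explicit to justify it. No further topological or algebraic argument is required, and the proof can be kept to a few lines.
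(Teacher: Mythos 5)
Your proof is correct and follows essentially the same route as the paper: the first equality is read off from the sublevel-set identity \eqref{f-r252n}, and the second from the inclusion \eqref{r0} combined with $\operatorname*{bd}A\subseteq A$. You merely spell out the second step (which the paper compresses into ``results from \eqref{r0}'') in more detail, and your explicit appeal to the closedness of $A$ there is a reasonable addition.
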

\begin{proof}
$\varphi  _{A,k}(y)\in\mathbb{R}$ implies
$\varphi  _{A,k}(y)= \min \{t\in {\mathbb{R}} \mid \varphi  _{A,k}(y)\leq t \}= \min \{t\in {\mathbb{R}} \mid y\in A+tk \}$ by (\ref{f-r252n}). Thus, we get the first equation. The second one results from (\ref{r0}).
\end{proof}

\cite[Theorem 3.1]{Wei17a} contains the following results.

\begin{lemma}\label{prop-funcII}
Assume $A-\mathbb{R}_{>} k\subseteq \operatorname*{int} \;A$. 
Then 
\begin{eqnarray}
\varphi _{A,k}  & \mbox{ is } & \mbox{ continuous on } \operatorname*{dom}\varphi _{A,k},\nonumber\\
\operatorname*{dom}\varphi _{A,k} & = &{\mathbb{R}}k+\operatorname*{int}A\not=  \emptyset \mbox{ is an open set},\nonumber\\
\operatorname*{lev}\nolimits _{\varphi_{A,k},<}(t) & = & \operatorname*{int}A + t k\quad\forall\, t \in {\mathbb{R}.} \label{int_less} 
\end{eqnarray} 
Moreover,
\begin{itemize}
\item[(a)] $\varphi _{A,k}(y)=-\infty \iff y+\mathbb{R}k\subseteq \operatorname*{int} \;A$.
\item[(b)] $\mathop{\rm bd}A+\mathbb{R}k$ is the subset of $\,Y$ on which $\varphi _{A,k}$ is finite-valued, and\\
$\operatorname*{dom}\varphi _{A,k}\setminus \operatorname*{int}A\subseteq \mathop{\rm bd}A+\mathbb{R}_{+}k$. 
\item[(c)] The following conditions are equivalent:
\begin{eqnarray*}
\varphi _{A,k} & \mbox{ is } & \mbox{ proper},\\
\operatorname*{int}A  & =  & \operatorname*{bd}A-\mathbb{R}_{>} k.
\end{eqnarray*}
\item[(d)] $\varphi _{A,k} $ is finite-valued if and only if
$Y=\operatorname*{bd}A+\mathbb{R}k$. 
\end{itemize}
\end{lemma}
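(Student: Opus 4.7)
The plan is to leverage the hypothesis $A-\mathbb{R}_{>} k\subseteq \operatorname*{int}A$, which says translating $A$ strictly in direction $-k$ produces interior points, together with the facts already available from Lemma \ref{t251} and Lemma \ref{l-min-bd}. The natural order is to first pin down the domain and the open sublevel sets, then derive continuity, and finally deduce (a)--(d) as corollaries.

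For the domain identity, I would start from $\operatorname*{dom}\varphi_{A,k}=A+\mathbb{R}k$ (Lemma \ref{t251}). The inclusion $\operatorname*{int}A+\mathbb{R}k\subseteq A+\mathbb{R}k$ is trivial; for the reverse I rewrite any $y=a+sk$ as $y=(a-k)+(s+1)k$, noting $a-k\in A-\mathbb{R}_{>}k\subseteq\operatorname*{int}A$. Openness is then immediate because $\mathbb{R}k+\operatorname*{int}A$ is a union of open translates, and nonemptiness follows from the same shift applied to any $a\in A$ (using that $A$ is proper, hence nonempty).

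Next, I would prove the key identity (\ref{int_less}), since it drives everything else. For $\subseteq$, if $\varphi_{A,k}(y)<t$ pick $s\in(\varphi_{A,k}(y),t)$ with $y-sk\in A$ and write $y-tk=(y-sk)-(t-s)k\in A-\mathbb{R}_{>}k\subseteq\operatorname*{int}A$. For $\supseteq$, if $y-tk\in\operatorname*{int}A$, the continuity of the ray $s\mapsto y-sk$ together with the openness of $\operatorname*{int}A$ yields some $s<t$ with $y-sk\in\operatorname*{int}A\subseteq A$, hence $\varphi_{A,k}(y)\leq s<t$. Continuity of $\varphi_{A,k}$ on $\operatorname*{dom}\varphi_{A,k}$ now follows at once: lower semicontinuity was already in Lemma \ref{t251} (since $A+tk$ is closed), and upper semicontinuity is exactly the openness of $\{\varphi_{A,k}<t\}=\operatorname*{int}A+tk$.

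For (a), Lemma \ref{t251}(a) reduces matters to showing $y+\mathbb{R}k\subseteq A$ implies $y+\mathbb{R}k\subseteq\operatorname*{int}A$, which comes from $y+sk=(y+(s+1)k)-k\in A-\mathbb{R}_{>}k\subseteq\operatorname*{int}A$. For (b), Lemma \ref{l-min-bd} gives the $\subseteq$ half of the finite-valued description; for the $\supseteq$ half, writing $y=b+sk$ with $b\in\operatorname*{bd}A\subseteq A$ yields $\varphi_{A,k}(y)\leq s$, and $\varphi_{A,k}(y)=-\infty$ would force $b\in\operatorname*{int}A$ by (a), contradicting $b\in\operatorname*{bd}A$. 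The second assertion of (b) follows by placing $y$ as $b+\varphi_{A,k}(y)k$ via Lemma \ref{l-min-bd} and observing that $\varphi_{A,k}(y)<0$ would give $y\in\operatorname*{bd}A-\mathbb{R}_{>}k\subseteq\operatorname*{int}A$.

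The only subtlety I anticipate is (c). Properness amounts to $\operatorname*{dom}\varphi_{A,k}$ coinciding with the finite-valued set, i.e.\ to $\operatorname*{int}A+\mathbb{R}k=\operatorname*{bd}A+\mathbb{R}k$ by what is already proved. The direction "$\operatorname*{int}A=\operatorname*{bd}A-\mathbb{R}_{>}k$ implies properness" is the routine set identity $\operatorname*{bd}A-\mathbb{R}_{>}k+\mathbb{R}k=\operatorname*{bd}A+\mathbb{R}k$. The converse is the real content: pick $y\in\operatorname*{int}A$, apply (\ref{int_less}) with $t=0$ to get $\varphi_{A,k}(y)<0$, and then use properness together with Lemma \ref{l-min-bd} to write $y=b+\varphi_{A,k}(y)k\in\operatorname*{bd}A-\mathbb{R}_{>}k$; the reverse inclusion $\operatorname*{bd}A-\mathbb{R}_{>}k\subseteq A-\mathbb{R}_{>}k\subseteq\operatorname*{int}A$ is automatic. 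Part (d) is then a direct reading of (b): $\varphi_{A,k}$ is finite-valued iff the finite-valued set $\operatorname*{bd}A+\mathbb{R}k$ equals $Y$. The hardest single step is the upper-bound direction of (\ref{int_less}); beyond that, everything is careful bookkeeping with the standing hypothesis.
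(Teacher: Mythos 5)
Your proof is correct, but note that the paper itself does not prove this lemma: it is imported verbatim as \cite[Theorem 3.1]{Wei17a} (``\cite[Theorem 3.1]{Wei17a} contains the following results''), so there is no in-paper argument to compare against. Your derivation is a sound self-contained reconstruction from the material the paper does quote: the domain identity and (a) both come from the shift $a=(a-k)+k$ with $a-k\in A-\mathbb{R}_{>}k\subseteq\operatorname*{int}A$; the two inclusions of (\ref{int_less}) are handled correctly (the hypothesis gives $\subseteq$, and continuity of $s\mapsto y-sk$ in a topological vector space gives $\supseteq$); continuity then follows from the already-quoted lower semicontinuity plus openness of the strict sublevel sets; and (b)--(d) reduce to Lemma \ref{l-min-bd} and the identification of the finite-valued set with $\operatorname*{bd}A+\mathbb{R}k$. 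The one step you should make explicit rather than leave implicit is in the second assertion of (b): for $y\in\operatorname*{dom}\varphi_{A,k}\setminus\operatorname*{int}A$ you invoke Lemma \ref{l-min-bd}, which requires $\varphi_{A,k}(y)\in\mathbb{R}$; this holds because $\varphi_{A,k}(y)=-\infty$ would place $y\in\operatorname*{int}A$ by part (a), a contradiction. With that sentence added, the argument is complete and is essentially the standard route one would expect the cited source to take.
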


We will use two other lemmata from \cite{Wei17b}.

\begin{lemma}\label{t-scale}
Consider some arbitrary $\lambda\in\mathbb{R}_{>}$.
Then $(H1_{A,\lambda k})$ holds, $\operatorname*{dom}\varphi _{A,\lambda k}=\operatorname*{dom}\varphi _{A,k}$ and
$\varphi_{A,\lambda k}(y)=\frac{1}{\lambda} \varphi_{A,k}(y)$ for all $y\in Y$.
\end{lemma}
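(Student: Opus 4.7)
The plan is a short, direct verification in three steps; none of them is really hard, and the only ``idea'' is the change of variables $s=\lambda t$ in the infimum defining $\varphi_{A,\lambda k}$.

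First I would verify that the hypothesis $(H1_{A,\lambda k})$ is satisfied, so that $\varphi_{A,\lambda k}$ is actually defined. Since $(H1_{A,k})$ is assumed and the conditions ``$A$ is a proper closed subset of $Y$'' do not involve $k$, the only thing to check is $\lambda k\in -0^+A\setminus\{0\}$. Non-nullity is clear from $\lambda>0$ and $k\neq 0$. For the recession cone membership, I start from $-k\in 0^+A$, i.e.\ $A+\mathbb{R}_+(-k)\subseteq A$, and note that $\mathbb{R}_+(-\lambda k)=\lambda\,\mathbb{R}_+(-k)=\mathbb{R}_+(-k)$ because $\lambda>0$; hence $A+\mathbb{R}_+(-\lambda k)\subseteq A$, which is exactly $\lambda k\in -0^+A$.

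Second, I would establish the pointwise formula $\varphi_{A,\lambda k}(y)=\tfrac{1}{\lambda}\varphi_{A,k}(y)$ for every $y\in Y$. By the definition,
\[
\varphi_{A,\lambda k}(y)=\inf\{t\in\mathbb{R}\mid y\in A+t(\lambda k)\}=\inf\{t\in\mathbb{R}\mid y\in A+(\lambda t)k\}.
\]
Substituting $s:=\lambda t$, which is a bijection of $\mathbb{R}$ onto itself since $\lambda>0$, turns this into
\[
\varphi_{A,\lambda k}(y)=\inf\bigl\{\tfrac{s}{\lambda}\bigm|s\in\mathbb{R},\;y\in A+sk\bigr\}=\tfrac{1}{\lambda}\inf\{s\in\mathbb{R}\mid y\in A+sk\}=\tfrac{1}{\lambda}\varphi_{A,k}(y),
\]
where the factor $\tfrac{1}{\lambda}>0$ is pulled out of the infimum. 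The case where the set is empty is taken care of by the convention $\inf\emptyset=+\infty$, under which the identity $\frac{1}{\lambda}\cdot(+\infty)=+\infty$ continues to hold.

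Third, the claim $\operatorname*{dom}\varphi_{A,\lambda k}=\operatorname*{dom}\varphi_{A,k}$ is then immediate: by definition, $y$ lies in the effective domain iff the corresponding functional value is in $\mathbb{R}\cup\{-\infty\}$, and multiplication by the positive real $\tfrac{1}{\lambda}$ preserves this property (sending $+\infty$ to $+\infty$, $-\infty$ to $-\infty$, and real values to real values). The main obstacle, if any, is just to be careful with the extended-real arithmetic when the infimum is taken over an empty set or equals $-\infty$, but both cases are handled uniformly by the convention $\inf\emptyset=+\infty$ and the fact that $\lambda>0$.
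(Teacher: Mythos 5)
Your proof is correct. The paper itself states this lemma without proof (it is imported from reference [8], ``Gerstewitz functionals on linear spaces\ldots''), so there is no in-paper argument to compare against; your direct verification --- checking $(H1_{A,\lambda k})$ via $\mathbb{R}_+(-\lambda k)=\mathbb{R}_+(-k)$ (equivalently, just the cone property of $0^+A$) and then the substitution $s=\lambda t$ in the infimum, with the extended-real cases handled by $\inf\emptyset=+\infty$ and positivity of $\lambda$ --- is the natural and complete argument.
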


\begin{lemma}\label{0-shift}
Consider some arbitrary $c\in \mathbb{R}$.
Then $(H1_{A+ck,k})$ holds, $\operatorname*{dom}\varphi _{A+ck,k}=\operatorname*{dom}\varphi _{A,k}$ and
$\varphi_{A+ck,k}(y)=\varphi_{A,k}(y)-c$ for all $y\in Y$.
\end{lemma}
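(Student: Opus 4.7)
The plan is to verify the three assertions in order, with the entire argument reduced to an affine change of the infimum parameter. There is essentially no obstacle here; the lemma is a translation-invariance statement, and the main task is to be careful about the hypothesis $(H1_{A+ck,k})$ and about extended-real values when the defining set is empty.

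First I would check that $(H1_{A+ck,k})$ is satisfied. Since translation by $ck$ is a homeomorphism of $Y$, the set $A+ck$ is closed and proper in $Y$ whenever $A$ is. For the recession cone condition, I would observe that the recession cone is invariant under translation: directly from the definition, $u\in 0^+(A+ck)$ iff $(A+ck)+\mathbb{R}_+u\subseteq A+ck$ iff $A+\mathbb{R}_+u\subseteq A$ iff $u\in 0^+A$. Hence $-0^+(A+ck)\setminus\{0\}=-0^+A\setminus\{0\}$, and the assumption $k\in -0^+A\setminus\{0\}$ transfers without change.

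Second, the functional identity is obtained by substituting into the definition. For every $y\in Y$,
\begin{equation*}
\varphi_{A+ck,k}(y)=\inf\{t\in\mathbb{R}\mid y\in (A+ck)+tk\}=\inf\{t\in\mathbb{R}\mid y\in A+(t+c)k\}.
\end{equation*}
Setting $s:=t+c$, the index set becomes $\{s-c\mid y\in A+sk\}$, whose infimum is $\varphi_{A,k}(y)-c$. The convention $\inf\emptyset=+\infty$ is preserved under the shift by $-c$, so the identity $\varphi_{A+ck,k}(y)=\varphi_{A,k}(y)-c$ holds on all of $Y$ with values in $\overline{\mathbb{R}}$.

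Finally, the domain equality is immediate from this formula: since subtracting a real constant $c$ leaves $\mathbb{R}\cup\{-\infty\}$ invariant, $\varphi_{A+ck,k}(y)\in\mathbb{R}\cup\{-\infty\}$ iff $\varphi_{A,k}(y)\in\mathbb{R}\cup\{-\infty\}$, giving $\operatorname*{dom}\varphi_{A+ck,k}=\operatorname*{dom}\varphi_{A,k}$. The only subtle point in the whole proof is the translation invariance of the recession cone, which is a one-line verification from the definition.
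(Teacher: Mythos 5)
Your proof is correct: the translation invariance of the recession cone, the reparametrization $s=t+c$ of the defining infimum (with $\inf\emptyset=+\infty$ handled consistently), and the resulting domain equality are all sound. Note that the paper itself gives no proof of this lemma — it is imported from the cited reference [Wei17b] — but your substitution argument is the standard one and there is nothing to add.
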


In \cite{Wei16b} and \cite{Wei16c}, Gerstewitz functionals are used for the scalarization of vector optimization problems. There, a vector optimization problem is given by a function $f:S\rightarrow Y$ defined on a nonempty set $S$ and by a set $D\subset Y$ which defines the solution concept. $D$ is called the domination set of the problem. A solution of the vector optimization problem is each $s\in S$ for which $f(s)$ is an efficient element of $F:=f(S)$ with respect to $D$. 
An element $y^{0}\in F$ is called an efficient element of $F$
w.r.t. $D$ iff 
\[
F\cap(y^0-D)\subseteq \{y^0\}.
\]
We denote the set of efficient elements of $F$ with respect to $D$ by $\operatorname*{Eff}(F,D).$

In our paper, we need the following statement \cite[Lemma 11]{Wei16c}.

\begin{lemma}\label{l-Eff-prop}
Assume $F, D\subseteq Y$ and $F\subseteq \tilde{F}\subseteq F+(D\cup\{0\})$.
Then $\operatorname*{Eff}(\tilde{F},D)\subseteq \operatorname*{Eff}(F,D)$.
If, additionally, $D\cap (-D)\subseteq \{0\}$ and $D+D\subseteq D$, then $\operatorname*{Eff}(\tilde{F},D)=\operatorname*{Eff}(F,D)$.
\end{lemma}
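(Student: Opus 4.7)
The plan is to argue each inclusion directly from the definition of efficient element, exploiting the sandwich $F\subseteq\tilde F\subseteq F+(D\cup\{0\})$; no topology is needed, and the two halves differ only in which algebraic hypotheses on $D$ they call upon.

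For the first inclusion, I would pick $y^0\in\operatorname*{Eff}(\tilde F,D)$ and first establish $y^0\in F$. By the upper bound on $\tilde F$ one can write $y^0=f+d$ with $f\in F$ and $d\in D\cup\{0\}$: the branch $d=0$ gives $y^0=f\in F$ at once, while in the branch $d\in D$ one has $f=y^0-d\in\tilde F\cap(y^0-D)\subseteq\{y^0\}$, forcing $f=y^0$ and $d=0$, so again $y^0\in F$. Once $y^0\in F$ is known, the inclusion $F\subseteq\tilde F$ immediately yields $F\cap(y^0-D)\subseteq\tilde F\cap(y^0-D)\subseteq\{y^0\}$, i.e., $y^0\in\operatorname*{Eff}(F,D)$.

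For the reverse inclusion, assuming additionally $D\cap(-D)\subseteq\{0\}$ and $D+D\subseteq D$, I would take $y^0\in\operatorname*{Eff}(F,D)$ (so $y^0\in F\subseteq\tilde F$) and an arbitrary $\tilde y\in\tilde F\cap(y^0-D)$. The key is to apply the sandwich twice: write $\tilde y=y^0-d_1$ with $d_1\in D$, and $\tilde y=f+d_2$ with $f\in F$ and $d_2\in D\cup\{0\}$. Then $f=y^0-(d_1+d_2)$, and in both sub-cases ($d_2=0$ or $d_2\in D$) one has $d_1+d_2\in D$ by the semigroup hypothesis. Consequently $f\in F\cap(y^0-D)\subseteq\{y^0\}$, so $f=y^0$, which forces $\tilde y=y^0+d_2=y^0-d_1$ and hence $d_2=-d_1$. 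If $d_2\in D$, the pointedness hypothesis $D\cap(-D)\subseteq\{0\}$ gives $d_2=0$; if instead $d_2=0$, then already $d_1=0$ from $d_2=-d_1$. Either way $\tilde y=y^0$, so $y^0\in\operatorname*{Eff}(\tilde F,D)$.

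The main obstacle is simply keeping the case analysis clean and tracking where each algebraic hypothesis is used: the pointedness condition $D\cap(-D)\subseteq\{0\}$ is invoked only to collapse the branch $d_2\in D$ in the second half, and the semigroup condition $D+D\subseteq D$ is invoked only to keep $d_1+d_2$ inside $D$ in that same branch. All other steps are pure element-chasing in the abelian group $(Y,+)$, and no property of the Gerstewitz functionals developed earlier enters the argument.
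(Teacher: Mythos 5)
Your argument is correct, and every step checks out against the paper's definition of efficiency (which, importantly, includes the membership requirement $y^0\in F$ that you verify explicitly in the first half). Note, however, that this paper does not prove the lemma at all: it is imported verbatim as \cite[Lemma 11]{Wei16c}, so there is no in-paper proof to compare against. Your element-chasing route is the natural one for this statement, and the bookkeeping is right: in the forward inclusion the decomposition $y^0=f+d$ forces $f=y^0$ via efficiency in $\tilde F$, and in the reverse inclusion the double decomposition $\tilde y=y^0-d_1=f+d_2$ combined with $D+D\subseteq D$ puts $f$ into $F\cap(y^0-D)$, after which $D\cap(-D)\subseteq\{0\}$ collapses $d_2=-d_1$ to $0$. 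You also correctly isolate where each of the two extra hypotheses is used, and correctly observe that none of the Gerstewitz machinery is needed. The only cosmetic remark: in the first half, the case $d\in D$ yields $f=y^0$ and hence $d=0$, which is consistent (it just means that decomposition degenerates), so the two branches could be merged, but as written nothing is wrong.
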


Furthermore, the following lemmata will be used in the proofs of the next sections.
We get from \cite[Theorem 2.43]{AliBor06}:

\begin{lemma}\label{p-lsc-min}
A real-valued lower semicontinuous function on a nonempty compact subset of a topological space attains a minimum value, and the nonempty set of minimizers is compact. 
\end{lemma}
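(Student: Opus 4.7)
The plan is to mimic the classical proof of the extreme value theorem, with lower semicontinuity replacing continuity and the finite intersection property replacing the Bolzano--Weierstrass argument. Let $K$ denote the nonempty compact subset and $f:K\to\mathbb{R}$ the lower semicontinuous function. Recall that lower semicontinuity is equivalent to saying that every superlevel set $\{x\in K\mid f(x)>t\}$ is open in $K$ (equivalently, every sublevel set is closed in $K$).

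First I would show that $f$ is bounded below on $K$. The family of sets $U_n:=\{x\in K\mid f(x)>-n\}$, $n\in\mathbb{N}_{>}$, is open in $K$ by lower semicontinuity, and covers $K$ because $f$ is real-valued. By compactness a finite subcover exists, and since the $U_n$ are nested, a single $U_N$ already equals $K$; in particular $m:=\inf_{x\in K}f(x)\in\mathbb{R}$.

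Next I would show that $m$ is attained. For $n\in\mathbb{N}_{>}$ set
\[
C_n:=\{x\in K\mid f(x)\leq m+1/n\}.
\]
Each $C_n$ is closed in $K$ by lower semicontinuity, and nonempty by the definition of the infimum. The family $(C_n)_{n\in\mathbb{N}_{>}}$ is nested and therefore has the finite intersection property, so compactness of $K$ yields $\bigcap_{n}C_n\neq\emptyset$. Any point $x^{\ast}$ in this intersection satisfies $f(x^{\ast})\leq m+1/n$ for every $n$, hence $f(x^{\ast})=m$.

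Finally, the minimizer set equals $M:=\{x\in K\mid f(x)\leq m\}$, which is a sublevel set of $f$ and therefore closed in $K$ by lower semicontinuity; as a closed subset of a compact space it is itself compact. The main obstacle, such as it is, lies only in the first step: one must rule out $m=-\infty$ before the sublevel-set argument can be applied, and this is precisely what the open-cover argument accomplishes. All remaining steps are routine consequences of the definitions.
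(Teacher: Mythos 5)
Your proof is correct. The paper does not prove this lemma itself --- it is quoted directly from the literature (Theorem 2.43 of Aliprantis and Border) --- so there is no in-paper argument to compare against; your write-up is the standard Weierstrass-type argument (open superlevel sets plus a finite subcover for boundedness below, the finite intersection property of the nested closed sublevel sets $\{x\in K\mid f(x)\leq m+1/n\}$ for attainment, and closedness of $\{x\in K\mid f(x)\leq m\}$ in the compact set $K$ for compactness of the minimizer set), and every step is sound.
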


This implies the next statement by the use of the topology which is induced on a subset of the space.

\begin{lemma}\label{p-minex}
Assume that the following conditions hold:
\begin{itemize}
\item[(a)] $\varphi : Y\to\overline{\mathbb{R}}$ is finite-valued on $M\subseteq \operatorname*{dom}\varphi$ and lower semicontinuous,
\item[(b)] $M$ is a nonempty compact set. 
\end{itemize}
Then the set of minimizers of $\varphi $ on $M$ is nonempty and compact.
\end{lemma}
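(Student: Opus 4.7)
The plan is to reduce directly to Lemma \ref{p-lsc-min} by passing to the subspace topology that $M$ inherits from $Y$. Under this induced topology $M$ becomes a nonempty compact topological space in its own right, so the framework of Lemma \ref{p-lsc-min} applies once we find a real-valued lower semicontinuous function on it.

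The natural candidate is the restriction $\psi := \varphi|_M : M \to \mathbb{R}$, which is indeed real-valued because hypothesis (a) asserts that $\varphi$ is finite-valued on $M$. The main step is to verify lower semicontinuity of $\psi$ with respect to the subspace topology. For each $t\in\mathbb{R}$, the set $\{y\in Y \mid \varphi(y) > t\}$ is open in $Y$ by the lower semicontinuity of $\varphi$, hence
\begin{equation*}
\{y\in M \mid \psi(y) > t\} = M\cap \{y\in Y \mid \varphi(y) > t\}
\end{equation*}
is open in $M$. Equivalently, the sublevel sets of $\psi$ are closed in $M$, so $\psi$ is lower semicontinuous on $M$.

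With $\psi$ real-valued and lower semicontinuous on the nonempty compact space $M$, Lemma \ref{p-lsc-min} produces a nonempty compact set of minimizers of $\psi$ on $M$. This set coincides with the set of minimizers of $\varphi$ on $M$, and its compactness in the subspace topology is the same as its compactness as a subset of $Y$ (since the inclusion $M\hookrightarrow Y$ is continuous, or directly because compactness is intrinsic). This gives the desired conclusion.

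No genuine obstacle arises; the only point that calls for care is the transfer of lower semicontinuity from $Y$ to the subspace $M$, which is exactly the passage alluded to by the phrase "by the use of the topology which is induced on a subset of the space" preceding the statement.
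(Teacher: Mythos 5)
Your proposal is correct and follows exactly the route the paper intends: the paper gives no separate proof but only the remark that Lemma \ref{p-minex} follows from Lemma \ref{p-lsc-min} ``by the use of the topology which is induced on a subset of the space,'' which is precisely the reduction you carry out. Your explicit verification that the strict superlevel sets of the restriction are relatively open, and that compactness of the minimizer set is intrinsic, fills in the same details the paper leaves implicit.
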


The next lemma is due to \cite{Wei90}.

\begin{lemma}\label{l-cone-genY}
Assume that $D\subseteq Y$ is a cone with $\operatorname*{int}D\not=\emptyset$ and $d\in\operatorname*{int}D$. Then 
\begin{equation*}
Y=\operatorname*{int}D-\mathbb{R}_>d.
\end{equation*} 
\end{lemma}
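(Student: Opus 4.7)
The inclusion $\operatorname*{int}D-\mathbb{R}_{>}d\subseteq Y$ is trivial, so the content of the lemma is the reverse inclusion. The plan is to fix an arbitrary $y\in Y$ and produce $\lambda\in\mathbb{R}_{>}$ with $y+\lambda d\in\operatorname*{int}D$; then $y=(y+\lambda d)-\lambda d\in\operatorname*{int}D-\mathbb{R}_{>}d$.

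To produce such a $\lambda$, I would use continuity of the scalar multiplication in $Y$. The map $g:\mathbb{R}\to Y$, $g(t):=d+ty$, is continuous because $Y$ is a topological vector space; moreover $g(0)=d\in\operatorname*{int}D$. Since $\operatorname*{int}D$ is open, $g^{-1}(\operatorname*{int}D)$ is an open neighborhood of $0$ in $\mathbb{R}$, so there exists $t_0\in\mathbb{R}_{>}$ with $d+t_0 y\in\operatorname*{int}D$.

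Next I would invoke the observation that $\operatorname*{int}D$ is stable under multiplication by positive scalars: if $x\in\operatorname*{int}D$, pick an open set $U$ with $x\in U\subseteq D$; for $\mu>0$ the set $\mu U$ is open (multiplication by $\mu$ is a homeomorphism) and satisfies $\mu x\in\mu U\subseteq\mu D\subseteq D$ because $D$ is a cone, hence $\mu x\in\operatorname*{int}D$. Applying this with $\mu=1/t_0$ gives $\tfrac{1}{t_0}d+y=\tfrac{1}{t_0}(d+t_0 y)\in\operatorname*{int}D$. Setting $\lambda:=1/t_0>0$ yields $y+\lambda d\in\operatorname*{int}D$, which completes the argument.

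No step is really an obstacle; the only delicate point is justifying that $\operatorname*{int}D$ is closed under multiplication by positive scalars, which rests on the fact that $D$ is a cone and that in a topological vector space multiplication by a fixed nonzero scalar is a homeomorphism. Note that pointedness of $D$ and the other hypotheses used elsewhere in the paper are not required here.
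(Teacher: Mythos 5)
Your proof is correct and is essentially the paper's argument in pointwise form: the paper notes that $V:=\operatorname*{int}D-d$ is a neighbourhood of $0$, hence absorbing, and that $tV\subseteq\operatorname*{int}D-td$ for $t>0$ since $D$ is a cone, which is exactly what your continuity-of-$t\mapsto d+ty$ step plus the positive-scalar invariance of $\operatorname*{int}D$ establish. Both routes rest on the same two facts, so there is nothing substantively different to compare.
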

\begin{proof}
$V:=\operatorname*{int}D-d$ is a neighbourhood of $0$. $\Rightarrow tV=t(\operatorname*{int}D-d)\subseteq
\operatorname*{int}D-td \;\forall t\in\mathbb{R}_>$ since $D$ is a cone. $\Rightarrow \mathbb{R}_> V\subseteq \operatorname*{int}D-\mathbb{R}_> d$. $\Rightarrow Y=\mathbb{R}_+ V\subseteq\operatorname*{int}D-\mathbb{R}_> d$ since $d\in \operatorname*{int}D$.
\end{proof}

We will also need the following lemmata, which were proved in \cite{weid93}.
\begin{lemma}\label{hab-s21_16}
Assume that $M$ and $D$ are closed convex sets in $\mathbb{R}^{\ell}$. If there exists some $a\in\mathbb{R}^{\ell}$ such that $M\cap (a-D)$ is not bounded, then $0^+(M\cap (b-D))\not=\{0\}$ for each 
$b\in\mathbb{R}^{\ell}$ with $M\cap (b-D)\not=\emptyset$.
\end{lemma}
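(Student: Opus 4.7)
The plan is to reduce everything to the basic finite-dimensional recession-cone calculus: for a nonempty closed convex set $C\subseteq\mathbb{R}^{\ell}$ one has $C$ bounded iff $0^+C=\{0\}$, and for a finite (nonempty) intersection $C=C_1\cap C_2$ of closed convex sets one has $0^+C=0^+C_1\cap 0^+C_2$. The crucial observation is that, since translations do not affect recession cones, $0^+(b-D)=-0^+D$ for every $b\in\mathbb{R}^{\ell}$; hence the recession cone of $M\cap(b-D)$ depends on $b$ only through whether it is empty or not.

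Concretely, I would proceed in three short steps. First, I would note that $M\cap(a-D)$ is closed and convex (as an intersection of two closed convex sets) and, by hypothesis, unbounded, hence in particular nonempty. The finite-dimensional recession-cone characterization of boundedness then gives $0^+\bigl(M\cap(a-D)\bigr)\neq\{0\}$. Second, I would apply the intersection formula together with $0^+(a-D)=-0^+D$ to conclude
\begin{equation*}
0^+M\,\cap\,(-0^+D)\;=\;0^+\bigl(M\cap(a-D)\bigr)\;\neq\;\{0\}.
\end{equation*}
Third, for any $b\in\mathbb{R}^{\ell}$ with $M\cap(b-D)\neq\emptyset$, the same intersection formula yields
\begin{equation*}
0^+\bigl(M\cap(b-D)\bigr)\;=\;0^+M\,\cap\,0^+(b-D)\;=\;0^+M\,\cap\,(-0^+D),
\end{equation*}
and the right-hand side has already been shown to be nontrivial. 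This finishes the proof.

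I do not expect any real obstacle here; the argument is essentially a bookkeeping of standard facts from Rockafellar's convex analysis. The only point that requires a moment of care is the invocation of the intersection formula $0^+(C_1\cap C_2)=0^+C_1\cap 0^+C_2$, which needs $C_1\cap C_2$ to be nonempty and the sets to be closed and convex in a finite-dimensional space — all hypotheses that are satisfied in both applications above. Note also that finite dimensionality is essential, both for the equivalence of boundedness with triviality of the recession cone and for the intersection formula without any further constraint qualification.
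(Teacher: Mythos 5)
Your argument is correct. Note that the paper itself does not prove this lemma at all --- it is quoted as a known result from \cite{weid93} --- so there is no internal proof to compare against; your write-up in fact supplies a self-contained justification that the paper omits. The three ingredients you use (boundedness of a nonempty closed convex set in $\mathbb{R}^{\ell}$ is equivalent to triviality of its recession cone; $0^+(C_1\cap C_2)=0^+C_1\cap 0^+C_2$ for closed convex sets with nonempty intersection; translation invariance giving $0^+(b-D)=-0^+D$) are exactly the standard Rockafellar facts, and you apply the nontrivial inclusion $0^+(M\cap(a-D))\subseteq 0^+M\cap(-0^+D)$ only where its hypotheses (nonemptiness, closedness, convexity, finite dimension) are available, while the reverse inclusion used for $b$ is the trivial one and needs nothing. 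One tiny point worth making explicit if this were to be inserted as a proof: unboundedness of $M\cap(a-D)$ in particular forces that set to be nonempty, which is what licenses the first application of the intersection formula; you do say this, so the argument is complete.
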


\begin{lemma}\label{hab-s21_17}
Assume that $D\subseteq \mathbb{R}^{\ell}$ is a closed convex set with $0\in D$ and that $M\subset \mathbb{R}^{\ell}$ is a set for which there exist a polyhedral cone $C\subset \mathbb{R}^{\ell}$ and some $u\in
\mathbb{R}^{\ell}$ such that $-D\setminus\{0\}\subset \operatorname*{int}C$ and $(M-u)\cap
\operatorname*{int}C=\emptyset$.
Then each set $M\cap (b-D)$ with $b\in\mathbb{R}^{\ell}$ is bounded.
\end{lemma}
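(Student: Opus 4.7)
My plan is a proof by contradiction via recession directions. Suppose that for some $b \in \mathbb{R}^\ell$ the set $M \cap (b-D)$ is unbounded, and choose a sequence $(x_n)$ in this intersection with $\|x_n\| \to \infty$. Writing $x_n = b - d_n$ with $d_n \in D$ forces $\|d_n\| \to \infty$ (since $b$ is fixed), so by compactness of the unit sphere in $\mathbb{R}^\ell$ I may extract a subsequence along which $d_n/\|d_n\|$ converges to some unit vector $d^*$.

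The first main step is to show that $d^* \in D \setminus \{0\}$. Non-vanishing is automatic from $\|d^*\|=1$. For membership in $D$, the standard recession-direction argument for a closed convex set applies: convex combinations of a fixed $z \in D$ with the $d_n$, weighted so as to stay bounded, converge to $z + t d^*$ and lie in $D$; closedness then gives $d^* \in 0^+ D$, and because $0 \in D$ we have $0^+ D \subseteq D$, hence $d^* \in D$. Consequently $-d^* \in -D \setminus \{0\} \subseteq \operatorname*{int} C$ by hypothesis.

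The second step produces the contradiction from this via the normalization
\[
\frac{x_n - u}{\|d_n\|} \;=\; \frac{b-u}{\|d_n\|} - \frac{d_n}{\|d_n\|} \;\longrightarrow\; -d^* \;\in\; \operatorname*{int} C.
\]
Because $\operatorname*{int} C$ is open, these normalized vectors lie in $\operatorname*{int} C$ for all sufficiently large $n$; because $C$ is a cone with nonempty interior, $\operatorname*{int} C$ is invariant under multiplication by positive reals, so rescaling back by $\|d_n\|$ yields $x_n - u \in \operatorname*{int} C$. Since $x_n \in M$, this contradicts $(M-u)\cap \operatorname*{int} C = \emptyset$.

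The main subtlety I anticipate is the recession step — ensuring that the extracted limit direction $d^*$ actually lies in $D$ — which is standard but relies crucially on closedness, convexity, and the presence of $0 \in D$. Notably, the polyhedrality of $C$ does not enter my argument; only openness of $\operatorname*{int} C$ together with its invariance under positive scaling is used. Perhaps the author's original proof in \cite{weid93} exploits the polyhedral structure of $C$ (finitely many defining linear inequalities) to obtain a more elementary non-sequential argument, but the compactness-based sketch above should suffice under the stated hypotheses.
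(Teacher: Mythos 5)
The paper itself does not prove Lemma \ref{hab-s21_17}: it is imported from \cite{weid93} without an in-paper argument, so there is nothing to compare your proof against line by line. On its own merits your proof is correct. The extraction of a unit limit direction $d^*$ of the unbounded sequence $d_n=b-x_n\in D$, the standard recession argument showing $d^*\in 0^+D$ (convex combinations $(1-t/\|d_n\|)z+(t/\|d_n\|)d_n\to z+td^*$, then closedness) together with $0\in D\Rightarrow 0^+D\subseteq D$, and the rescaling step $\frac{x_n-u}{\|d_n\|}\to -d^*\in\operatorname*{int}C$ followed by positive homogeneity of $\operatorname*{int}C$ all go through; note also that $\|x_n\|\to\infty$ forces $\|d_n\|\to\infty$, so $D$ is unbounded, $-D\setminus\{0\}\not=\emptyset$, and hence $\operatorname*{int}C\not=\emptyset$, which keeps the argument non-vacuous. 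Your observation that polyhedrality of $C$ is never used is accurate: only the facts that $C$ is a cone and that $\operatorname*{int}C$ is open and invariant under multiplication by positive reals enter, so the lemma holds for an arbitrary cone $C$ with these separation properties. The polyhedral hypothesis is best understood as reflecting how the lemma is used in this paper (in Propositions \ref{hab-s613d1}--\ref{hab-s613d3} the cone $C$ is explicitly assumed polyhedral) and how it was presumably established in \cite{weid93}, rather than as a genuine requirement of the statement.
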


\section{Problem formulation and related problems}\label{sec3}

From now on, we assume
\begin{eqnarray*}
\mbox{(H1--OP}_{F,a,H,k}\mbox{):} & & F\subseteq Y, a\in Y\mbox{, and }\\
& & H \mbox{ is a proper closed subset of } Y \mbox{ with } k\in 0^+H\setminus\{0\},
\end{eqnarray*}
unless not stated otherwise.

In this paper, we study the optimization problem
\begin{equation}\label{OP_varphi}
\operatorname*{min}_{y\in F}\varphi _{a-H,k}(y).
\end{equation}

Since $\varphi _{a-H,k}$ has been defined as an extended real-valued functional
and $F$ is not necessarily contained in the effective domain of $\varphi _{a-H,k}$, the feasible range $F\cap\operatorname*{dom}\varphi _{a-H,k}$ of the optimization problem does not always coincide with $F$.

\begin{theorem}\label{p-basis-PaHk}
\begin{itemize}
\item[]
\item[(a)] The optimization problem {\rm (\ref{OP_varphi})}
is equivalent to the optimization problem
\begin{eqnarray*}
(P_{F,a,H,k}):& &t\to \min\\
			& &y\in a-H+tk \\
			& &y\in F,\, t\in \mathbb{R},
\end{eqnarray*}
i.e., both problems have the same feasible point set $F\cap\operatorname*{dom}\varphi _{a-H,k}$, each problem has an optimal solution if and only if the other one has an optimal solution, and the optimal solutions $y$ as well as the optimal value of both problems coincide.
\item[(b)] The optimization problem {\rm (\ref{OP_varphi})} has an optimal solution if and only if the optimization problem
\begin{eqnarray*}
(P_{F,a,\operatorname*{bd}H,k}):& &t\to \min\\
			& &y\in a-\operatorname*{bd}H+tk \\
			& &y\in F,\, t\in \mathbb{R},
\end{eqnarray*}
has an optimal solution. In this case, the optimal solutions $y$ as well as the optimal value of both problems coincide. 
\item[(c)] If $(P_{F,a,H,k})$ has a feasible solution $(y^0,t_0)\in F\times \mathbb{R}$, then the set of minimizers $y$ of $(P_{F,a,H,k})$ and its optimal value coincide with those of the problem
\begin{equation}\label{OP-t0}
\operatorname*{min}_{y\in F\cap (a-H+t_0k)}\varphi _{a-H,k}(y).
\end{equation}
\end{itemize}
\end{theorem}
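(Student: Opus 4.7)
The plan is to deduce all three parts from the level-set identity (\ref{f-r252n}) of Lemma~\ref{t251}, applied to $A:=a-H$. First I would verify that the standing hypothesis forces $(H1_{a-H,k})$, by noting $0^+(a-H)=-0^+H$; then $\varphi_{a-H,k}$ is a well-defined Gerstewitz functional and Lemmas~\ref{t251} and~\ref{l-min-bd} apply. The master identity is
\[\{t\in\mathbb{R}\mid y\in a-H+tk\}\;=\;\{t\in\mathbb{R}\mid \varphi_{a-H,k}(y)\le t\},\]
read off directly from~(\ref{f-r252n}).

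For~(a), this identity shows that a point $y\in F$ admits some $t\in\mathbb{R}$ with $(y,t)$ feasible for $(P_{F,a,H,k})$ iff $y\in\operatorname*{dom}\varphi_{a-H,k}$, so the feasible point sets coincide. For such a $y$ with $\varphi_{a-H,k}(y)\in\mathbb{R}$, Lemma~\ref{l-min-bd} tells me the smallest admissible $t$ equals $\varphi_{a-H,k}(y)$, and this yields the optimum correspondence: $(y^*,t^*)$ is optimal for $(P_{F,a,H,k})$ iff $y^*$ minimizes $\varphi_{a-H,k}$ on $F$ and $t^*=\varphi_{a-H,k}(y^*)\in\mathbb{R}$.

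For~(b), I would repeat the argument of~(a) with $H$ replaced by $\operatorname*{bd}H$, now using the \emph{second} equality of Lemma~\ref{l-min-bd} together with $\operatorname*{bd}(a-H)=a-\operatorname*{bd}H$. The subtlety to keep in mind is that feasibility of $(P_{F,a,\operatorname*{bd}H,k})$ forces the stronger condition $\varphi_{a-H,k}(y)\in\mathbb{R}$, not merely $y\in\operatorname*{dom}\varphi_{a-H,k}$, which is why part~(b) is stated only for existence of optimal solutions (with coinciding $y$-minimizers and optimal values) rather than as equality of feasible sets.

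For~(c), the same master identity rewrites the restricted feasible set as
\[F\cap(a-H+t_0k)\;=\;\{y\in F\mid \varphi_{a-H,k}(y)\le t_0\}.\]
Feasibility of $(y^0,t_0)$ gives $\varphi_{a-H,k}(y^0)\le t_0$, so this set contains $y^0$. Any minimizer $y^*$ of~(\ref{OP_varphi}) then satisfies $\varphi_{a-H,k}(y^*)\le\varphi_{a-H,k}(y^0)\le t_0$ and therefore lies in the restricted set and minimizes $\varphi_{a-H,k}$ there; conversely, a minimizer $y^*$ of~(\ref{OP-t0}) beats every $y\in F$ with $\varphi_{a-H,k}(y)>t_0$, so it also minimizes $\varphi_{a-H,k}$ on~$F$. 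Transferring through~(a) gives the claim for $(P_{F,a,H,k})$. The only delicate point in the entire argument is the possibility $\varphi_{a-H,k}(y)=-\infty$ on $F$, in which case the infimum of $t$ in $(P_{F,a,H,k})$ is $-\infty$ and unattained in $\mathbb{R}$; this is accommodated by reading ``optimal solution'' as requiring an attained real optimal value, so that neither problem possesses one in this degenerate case.
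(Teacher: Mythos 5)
Your parts (a) and (c) are correct and take essentially the paper's route: (a) rests on $\operatorname*{dom}\varphi_{a-H,k}=(a-H)+\mathbb{R}k$ together with Lemma~\ref{l-min-bd}, and your identity $F\cap(a-H+t_0k)=\{y\in F\mid\varphi_{a-H,k}(y)\le t_0\}$ is just another way of phrasing the paper's observation that $a-H+tk\subseteq a-H+t_0k$ for all $t\le t_0$. The problem lies in part (b). Your key claim that feasibility of $(P_{F,a,\operatorname*{bd}H,k})$ forces $\varphi_{a-H,k}(y)\in\mathbb{R}$ is false: take $Y=\mathbb{R}^2$, $H=\{(h_1,h_2)^T\mid h_2\ge 0\}$, $k=(1,0)^T\in 0^+H\setminus\{0\}$, $a=0$ and $y=(0,0)^T$; then $y\in a-\operatorname*{bd}H+tk$ for every $t\in\mathbb{R}$, yet $\varphi_{a-H,k}(y)=-\infty$. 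Feasibility for the boundary problem only excludes the value $+\infty$, since $\operatorname*{bd}(a-H)\subseteq a-H$.

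The substantive gap this leaves is the implication ``$(P_{F,a,\operatorname*{bd}H,k})$ has an optimal solution $\Rightarrow$ (\ref{OP_varphi}) has one.'' If some $y\in F$ satisfies $y+\mathbb{R}k\subseteq\operatorname*{int}(a-H)$, then $(y,t)$ is feasible for $(P_{F,a,H,k})$ for every $t\in\mathbb{R}$, so neither (\ref{OP_varphi}) nor $(P_{F,a,H,k})$ has an optimal solution; but such a $y$ is invisible to the boundary problem, which can still possess an optimal solution coming from other points of $F$. For instance, with $a=0$, $a-H=\{(y_1,y_2)^T\mid y_2\le-e^{y_1}\}\cup\{(y_1,y_2)^T\mid y_2\ge 10\}$ (so $H$ is proper and closed with $k=(1,0)^T\in 0^+H\setminus\{0\}$) and $F=\{(0,-1)^T,(0,11)^T\}$, the boundary problem has the single feasible pair $((0,-1)^T,0)$ and hence an optimal solution, while $(0,11)^T$ makes $(P_{F,a,H,k})$ unbounded below. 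So your concluding remark that ``neither problem possesses one in this degenerate case'' is wrong for the boundary problem, and the equivalence in (b) genuinely needs the additional information that $\varphi_{a-H,k}>-\infty$ on $F$ (e.g.\ via Lemma~\ref{t251}(c) or (f)), or a separate argument disposing of the points where it equals $-\infty$. In fairness, the paper's own one-line proof of (b) is silent on exactly the same case; but since you explicitly isolated the issue, the justification you supply for it does not hold up and must be repaired.
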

\begin{proof}
By Lemma \ref{l-min-bd}, $\varphi _{a-H,k}(y)\in\mathbb{R}$ implies $\varphi  _{a-H,k}(y)= \min \{t\in {\mathbb{R}} \mid y\in a-H+tk \}= \min \{t\in {\mathbb{R}} \mid y\in a-\operatorname*{bd}H+tk \}$. Thus, we get (a) and (b).\\
Assume now that $(P_{F,a,H,k})$ has a feasible solution $(y^0,t_0)\in F\times \mathbb{R}$.
Since $k\in0^+H$,  $a-H+tk\subseteq a-H+t_0 k\quad\forall t\leq t_0$.
Hence, $(P_{F,a,H,k})$ has the same optimal solutions and the same optimal value as the problem (\ref{OP-t0}).
\end{proof}

Geometrical interpretation of $(P_{F,a,H,k})$:

Stick the set $-H$ to the point $a$ and shift the set $a-H$ along the line $\{ a+tk\mid t\in \mathbb{R}\}$ until the smallest
$t$ is reached for which the intersection of the set with $F$ is not empty,
i.e., for which $F\cap (a-H+tk)\neq \emptyset $.
Then $t$ is the optimal value of $(P_{F,a,H,k})$, and the set of points in which $F$ and $a-H+tk$ intersect is the set of optimal solutions $y$ of $(P_{F,a,H,k})$.

\medskip
We illustrate this by an example.
\begin{example}\label{hab-ex611}
Choose $Y=\mathbb{R}^2$, $F=\{ (y_1,y_2)^T\in \mathbb{R}^2\mid 0\leq y_2\leq y_1\leq 1\}$,
$H=\mathbb{R}^2_+$, $a=(-1,0)^T$, $k=(1,1)^T$.
$(P_{F,a,H,k})$ attains an optimal value $t^{opt}$. $y=(0,0)^T$ is the unique optimal solution of 
$(P_{F,a,H,k})$.
\end{example}

In applications, we will mainly work with $(P_{F,a,H,k})$, where the equivalence to $(P_{F,a,\operatorname*{bd}H,k})$ can be used for restricting the attention in certain steps to the boundary of $H$. The equivalence to problem (\ref{OP_varphi}) and problem (\ref{OP-t0}) is useful for proving properties of $(P_{F,a,H,k})$. Scalarization results for solution sets of vector optimization problems have been formulated in \cite{Wei16b} and \cite{Wei16c} using problem (\ref{OP_varphi}).

From now on, let $M_{F,a,H,k}$ denote the set of optimal solutions $y$ of problem $(P_{F,a,H,k})$. As we have just shown, this set coincides with $\operatorname*{argmin}\limits_{y\in F}\varphi _{a-H,k}(y)$ under assumption (H1--OP$_{F,a,H,k}$).

Sometimes, $F+H$ may be closed or convex though $F$ does not have this property.
In these cases, the following proposition can be useful. Here, we also consider the efficient elements of the minimizer set. As pointed out in \cite{Wei16b} and \cite{Wei16c}, only these minimizers are efficient elements of a vector optimization problem with respect to the same domination set.

\begin{proposition}\label{hab-l615}
Assume, additionally, $0\in H$ and $H+H\subseteq H$.
We get:
\begin{itemize}
\item[(a)] $(P_{F,a,H,k})$ has a solution if and only if $(P_{F+H,a,H,k})$ has a solution. In this case, both optimal values coincide. Moreover, 
\[M_{F,a,H,k}\subseteq M_{F+H,a,H,k}\subseteq M_{F,a,H,k}+H.\]
\item[(b)] If $F+H$ is closed, then 
\begin{eqnarray*}
\operatorname*{cl}M_{F,a,H,k} & \subseteq & M_{F,a,H,k}+H \mbox{ and }\\
\operatorname*{Eff}(\operatorname*{cl}M_{F,a,H,k},H) & \subseteq & \operatorname*{Eff}(M_{F,a,H,k},H).
\end{eqnarray*} 
If, moreover, $H\cap (-H)=\{0\}$ holds, then \[\operatorname*{Eff}(\operatorname*{cl}M_{F,a,H,k},H)=\operatorname*{Eff}(M_{F,a,H,k},H).\]
\end{itemize}
\end{proposition}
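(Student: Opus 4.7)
The plan hinges on a single monotonicity observation: for every $y\in Y$ and $h\in H$ one has $\varphi_{a-H,k}(y)\leq \varphi_{a-H,k}(y+h)$. I would read this straight off Lemma \ref{t251}, whose sublevel set identity is $\operatorname*{lev}_{\varphi_{a-H,k},\leq}(t)=a-H+tk$. If $y+h=a-h_1+tk$ with $h_1\in H$, then $y=a-(h_1+h)+tk$ and $h_1+h\in H$ by $H+H\subseteq H$, so $\varphi_{a-H,k}(y)\leq t$ whenever $\varphi_{a-H,k}(y+h)\leq t$.

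For part (a), since $0\in H$ gives $F\subseteq F+H$, I get $\inf_{F+H}\varphi_{a-H,k}\leq \inf_{F}\varphi_{a-H,k}$ for free. Monotonicity shows that each element $y+h$ of $F+H$ satisfies $\varphi_{a-H,k}(y+h)\geq \varphi_{a-H,k}(y)$, yielding the reverse inequality; hence the two optimal values coincide, call the common value $t^{*}$, and existence of a solution in either problem implies existence in the other. Any $y^{*}\in M_{F,a,H,k}$ lies in $F+H$ (take $h=0$) and still attains $t^{*}$, so $y^{*}\in M_{F+H,a,H,k}$. Conversely, a minimizer $z=y+h$ of $(P_{F+H,a,H,k})$ forces $t^{*}=\varphi_{a-H,k}(z)\geq \varphi_{a-H,k}(y)\geq t^{*}$, so $y\in M_{F,a,H,k}$ and $z\in M_{F,a,H,k}+H$.

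For part (b), pick $z\in \operatorname*{cl}M_{F,a,H,k}$ and a net $y_\alpha\in M_{F,a,H,k}$ with $y_\alpha\to z$. The $y_\alpha$ lie in $F+H$ (again use $0\in H$), which is closed, so $z\in F+H$ and I can write $z=y+h$ with $y\in F$, $h\in H$. Lower semicontinuity of $\varphi_{a-H,k}$ from Lemma \ref{t251}, equivalently closedness of the sublevel set $a-H+t^{*}k$, gives $\varphi_{a-H,k}(z)\leq t^{*}$, while monotonicity yields $\varphi_{a-H,k}(z)\geq \varphi_{a-H,k}(y)\geq t^{*}$. Thus $\varphi_{a-H,k}(y)=t^{*}$, i.e., $y\in M_{F,a,H,k}$, and the inclusion $\operatorname*{cl}M_{F,a,H,k}\subseteq M_{F,a,H,k}+H$ is established.

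The efficiency statements are then a direct application of Lemma \ref{l-Eff-prop} with $F\mapsto M_{F,a,H,k}$, $\tilde F\mapsto \operatorname*{cl}M_{F,a,H,k}$, $D\mapsto H$: the hypothesis $M_{F,a,H,k}\subseteq \operatorname*{cl}M_{F,a,H,k}\subseteq M_{F,a,H,k}+(H\cup\{0\})$ is precisely what the previous paragraph provides (using $0\in H$), delivering $\operatorname*{Eff}(\operatorname*{cl}M_{F,a,H,k},H)\subseteq \operatorname*{Eff}(M_{F,a,H,k},H)$; adding $H\cap(-H)\subseteq\{0\}$ (together with the standing $H+H\subseteq H$) triggers the equality clause of Lemma \ref{l-Eff-prop}. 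I expect the monotonicity step to be the only substantive point; after that, the argument is bookkeeping with sublevel sets, closedness of $F+H$, lower semicontinuity, and Lemma \ref{l-Eff-prop}.
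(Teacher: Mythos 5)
Your proof is correct and follows essentially the same route as the paper's: the monotonicity observation you isolate, $\varphi_{a-H,k}(y)\leq\varphi_{a-H,k}(y+h)$ for $h\in H$, is exactly the sublevel-set computation ($y+h\in a-H+tk$ together with $H+H\subseteq H$ gives $y\in a-H+tk$ via (\ref{f-r252n})) that the paper carries out inline in both parts. The remaining steps --- $F\subseteq F+H$ from $0\in H$, closedness of $F+H$ and of the sublevel set $a-H+t^{*}k$ for part (b), and the application of Lemma \ref{l-Eff-prop} with $D=H$ --- coincide with the paper's argument, so there is nothing to add.
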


\begin{proof}
\begin{itemize}
\item[]
\item[(a)] First, assume that $\varphi _{a-H,k}$ attains on $F$ the minimal value $t$. Then $\varphi _{a-H,k}$ attains the function value $t$ also on $F+H$.\\
Supposition: $\exists\, \bar{t} <t,\, y\in F,\, h\in H:\; \varphi _{a-H,k}(y+h)=\bar{t}$.\\
$\Rightarrow y+h\in a-H+\bar{t} k$ because of (\ref{f-r252n}). $\Rightarrow y\in a-h-H+\bar{t} k
\subseteq a-H+\bar{t} k$.
Then (\ref{f-r252n}) implies $\varphi _{a-H,k}(y)\leq \bar{t}$, a contradiction to the definition
of $t$.
Hence, $t$ is also the minimal value of $\varphi _{a-H,k}$ on $F+H$.
This implies the first inclusion.\\
Assume now that $t$ is the minimum of $\varphi _{a-H,k}$ on $F+H$.
Then $\varphi _{a-H,k}$ does not attain any smaller value than $t$ on $F$.
Take any $y\in F$ and $h\in H$ with $\varphi _{a-H,k}(y+h)=t$.
$\Rightarrow y+h\in a-H+tk$.
$\Rightarrow y\in a-H+tk$.
$\Rightarrow \varphi _{a-H,k}(y)=t$.
$y$ is a minimal solution of $\varphi _{a-H,k}$ on $F$. This yields the second inclusion.
\item[(b)] For $M_{F,a,H,k}=\emptyset$, the statement is obvious.
Assume now $M_{F,a,H,k}\neq \emptyset $ and
$t$ to be the minimum of $\varphi _{a-H,k}$ on $F$.
$\Rightarrow M_{F,a,H,k}=F\cap (a-H+tk)$.
Consider an arbitrary $\bar{y}\in \operatorname*{cl}M_{F,a,H,k}$.
$\operatorname*{cl}M_{F,a,H,k}\subseteq \operatorname*{cl}F\subseteq \operatorname*{cl}(F+H)=F+H$.
$\Rightarrow \exists y\in F,\, h\in H:\; \bar{y}=y+h$.
$\operatorname*{cl}M_{F,a,H,k}\subseteq a-H+tk.\Rightarrow y+h\in a-H+tk$.
$\Rightarrow y\in a-h-H+tk\subseteq a-H+tk.\Rightarrow
y\in M_{F,a,H,k}$.
Thus, $\bar{y}\in M_{F,a,H,k}+H$.
Hence, $\operatorname*{cl}M_{F,a,H,k}\subseteq M_{F,a,H,k}+H$, and Lemma \ref{l-Eff-prop} yields the assertion.
\end{itemize}
\end{proof}

\section{Existence of optimal solutions and properties of the solution set}\label{sec4}

\begin{proposition}\label{hab-s614}
\begin{itemize}
\item[]
\item[(a)] If $F$ is closed, then $M_{F,a,H,k}$ is closed.
\item[(b)] If $F$ and $H$ are convex, then $M_{F,a,H,k}$ as well as the feasible range of $(P_{F,a,H,k})$
are convex.
\end{itemize}
\end{proposition}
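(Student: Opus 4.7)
The plan is to reduce both parts to the representation
\[
M_{F,a,H,k} \;=\; F\cap\bigl(a-H+t^{*}k\bigr),
\]
valid whenever $M_{F,a,H,k}\neq\emptyset$, where $t^{*}$ denotes the optimal value of $(P_{F,a,H,k})$. This identity is forced on us: by (\ref{f-r252n}) of Lemma \ref{t251} applied to the set $a-H$, we have $\operatorname*{lev}\nolimits_{\varphi_{a-H,k},\leq}(t^{*})=(a-H)+t^{*}k$, and because $t^{*}$ is the minimum on $F$ the sublevel set intersected with $F$ coincides with the set of minimizers. So once I record this identity, the remainder is essentially soft topology and convex-set arithmetic.

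For part (a), I would first dispense with the case $M_{F,a,H,k}=\emptyset$ (trivially closed). Otherwise, using the identity, I only need to note that $H$ is closed by hypothesis $(H1\text{--}\mathrm{OP}_{F,a,H,k})$, hence $a-H+t^{*}k$ is closed (translation and reflection preserve closedness), and so $M_{F,a,H,k}$ is an intersection of two closed sets.

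For part (b) I need to treat two sets. The feasible range of $(P_{F,a,H,k})$ equals $F\cap\operatorname*{dom}\varphi_{a-H,k}$ by Theorem \ref{p-basis-PaHk}(a); applying Lemma \ref{t251} gives $\operatorname*{dom}\varphi_{a-H,k}=(a-H)+\mathbb{R}k$, which is a sum of two convex sets (since $H$ is convex and $\mathbb{R}k$ is a line), hence convex, and intersecting with the convex set $F$ preserves convexity. For $M_{F,a,H,k}$ I again invoke the identity: $a-H+t^{*}k$ is a translate of the convex set $-H$, hence convex, so $F\cap(a-H+t^{*}k)$ is convex as the intersection of two convex sets; and the empty case is trivial.

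The main (minor) obstacle is simply justifying the identity $M_{F,a,H,k}=F\cap(a-H+t^{*}k)$ cleanly, since the definition of $M_{F,a,H,k}$ is phrased through the $(P_{F,a,H,k})$ formulation while the level-set machinery is phrased through $\varphi_{a-H,k}$; the bridge is provided by Theorem \ref{p-basis-PaHk}(a), which equates the two formulations. Beyond that bridging step, everything reduces to the closedness of $H$ and to the fact that reflection, translation, and addition of $\mathbb{R}k$ preserve convexity.
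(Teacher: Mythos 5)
Your proposal is correct and follows essentially the same route as the paper: the paper's proof likewise dispenses with the empty case and then writes $M_{F,a,H,k}=F\cap(a-H+t_0k)$ with $t_0$ the optimal value, from which both closedness and convexity (and the convexity of the feasible range $F\cap((a-H)+\mathbb{R}k)$) follow immediately. You have merely spelled out the details that the paper leaves implicit.
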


\begin{proof}
If $M_{F,a,H,k}=\emptyset$, then the statement of the proposition is fulfilled.
Otherwise, $M_{F,a,H,k}=F\cap(a-H+t_0k)$, where $t_0$ denotes the optimal value of $(P_{F,a,H,k})$. This yields the assertion.
\end{proof}

Let us now investigate the existence of optimal solutions for the considered optimization problems and the compactness of the solution set.

Immediately from $(P_{F,a,H,k})$, we get necessary conditions for the existence of optimal solutions.
\begin{proposition}\label{optlsgn-notw}
If $M_{F,a,H,k}\not=\emptyset$, the following conditions are fulfilled:
\begin{eqnarray}
F\cap (a-H+\mathbb{R}k) & \not= & \emptyset,\label{notw1}\\
\exists\,t\in\mathbb{R}:\, F\cap (a-H+tk) & = & \emptyset\label{notw2}.
\end{eqnarray}
\end{proposition}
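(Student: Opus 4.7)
The strategy is to work directly from the equivalent formulation $(P_{F,a,H,k})$ in Theorem~\ref{p-basis-PaHk}(a) rather than from $\varphi_{a-H,k}$ itself, because the two conditions in the proposition literally describe feasibility and lower-boundedness of that parametric formulation. Since $M_{F,a,H,k}$ is the set of optimal $y$-components of $(P_{F,a,H,k})$, the hypothesis $M_{F,a,H,k}\neq\emptyset$ furnishes an optimal pair $(y^0,t_0)\in F\times\mathbb{R}$, and crucially $t_0$ is a genuine real number (not $-\infty$) because $(P_{F,a,H,k})$ minimizes over $t\in\mathbb{R}$. This observation is the only subtle point—one must note that an optimal solution automatically comes with a finite optimal value, which rules out the pathology $\varphi_{a-H,k}(y^0)=-\infty$.

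For (\ref{notw1}), I would simply observe that feasibility of $(y^0,t_0)$ gives $y^0\in F$ and $y^0\in a-H+t_0k\subseteq a-H+\mathbb{R}k$, so $y^0\in F\cap(a-H+\mathbb{R}k)$, which establishes the inclusion. For (\ref{notw2}), I would argue by contradiction: pick any $t<t_0$ and suppose there exists $y\in F\cap(a-H+tk)$. Then $(y,t)$ is feasible for $(P_{F,a,H,k})$ with objective value strictly less than $t_0$, contradicting the optimality of $t_0$. Hence $F\cap(a-H+tk)=\emptyset$ for every $t<t_0$, and in particular such a $t$ exists.

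The main obstacle—if one may call it that—is making sure the chain of equivalences in Theorem~\ref{p-basis-PaHk} is invoked correctly so that $M_{F,a,H,k}\neq\emptyset$ really does supply a feasible pair with finite optimal value, and that one can in turn read off $\varphi_{a-H,k}(y^0)=t_0$ via (\ref{f-r252n}) if desired. Beyond that, the proposition is a direct consequence of the very definition of $(P_{F,a,H,k})$ and requires no additional structure on $F$, $H$, or $k$ beyond (H1--OP$_{F,a,H,k}$).
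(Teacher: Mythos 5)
Your proof is correct and matches the paper's intent: the paper offers no explicit proof, stating only that the conditions follow ``immediately from $(P_{F,a,H,k})$,'' and your argument---reading (\ref{notw1}) off the feasibility of an optimal pair $(y^0,t_0)$ and (\ref{notw2}) off the minimality of the finite value $t_0$ (e.g.\ with $t=t_0-1$)---is exactly that immediate argument spelled out. No gaps.
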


Condition (\ref{notw1}) expresses the existence of a feasible solution and is equivalent to $F\cap\operatorname*{dom}\varphi _{a-H,k}\not=\emptyset$. It holds, if $F$ is nonempty and $\varphi _{a-H,k}$ is finite-valued on $F$.

Condition (\ref{notw2}) is fulfilled if and only if $\varphi _{a-H,k}$ is bounded below on $F$. In this case, $\varphi _{a-H,k}$ is finite-valued on $F\cap\operatorname*{dom}\varphi _{a-H,k}$.

\begin{lemma}
The following conditions are equivalent to each other:
\begin{itemize}
\item[(a)] $\varphi _{a-H,k}$ is finite-valued on $F\cap\operatorname*{dom}\varphi _{a-H,k}$.
\item[(b)] $\varphi _{a-H,k}$ is finite-valued on $F\cap (a-H+tk)$ for some $t\in\mathbb{R}$.
\end{itemize}
\end{lemma}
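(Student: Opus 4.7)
The equivalence is almost a bookkeeping exercise once one reads off the right consequence of Lemma~\ref{t251}(a). My plan is to treat the two implications separately, with the bulk of the content in (b)$\Rightarrow$(a).

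For (a)$\Rightarrow$(b) I would simply observe that for every $t\in\mathbb{R}$ one has
$a-H+tk\subseteq(a-H)+\mathbb{R}k=\operatorname*{dom}\varphi_{a-H,k}$
by Lemma~\ref{t251}. Hence $F\cap(a-H+tk)\subseteq F\cap\operatorname*{dom}\varphi_{a-H,k}$, and finite-valuedness on the larger set passes to the smaller one. So (b) holds for \emph{any} real $t$, in particular for some $t$.

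For (b)$\Rightarrow$(a) I would argue by contradiction. Suppose (b) holds with witness $t_0$, and suppose there is some $y\in F\cap\operatorname*{dom}\varphi_{a-H,k}$ with $\varphi_{a-H,k}(y)=-\infty$. Applying Lemma~\ref{t251}(a) to the set $A:=a-H$ gives $y+\mathbb{R}k\subseteq a-H$, which rearranges to $y\in a-H+tk$ for every $t\in\mathbb{R}$; in particular $y\in F\cap(a-H+t_0k)$. But then $\varphi_{a-H,k}$ takes the value $-\infty$ on this set, contradicting the finite-valuedness asserted in (b). Hence $\varphi_{a-H,k}$ is finite-valued on all of $F\cap\operatorname*{dom}\varphi_{a-H,k}$.

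The one point I would highlight so that no reader is tempted to look for a ``harder'' argument is the pigeon-hole nature of the key step: an element $y$ with $\varphi_{a-H,k}(y)=-\infty$ sits in \emph{every} translate $a-H+tk$ simultaneously, so any single witness $t_0$ in (b) automatically sees every offending point. Apart from that, both directions are immediate from Lemma~\ref{t251}, and the empty-intersection cases are vacuously covered.
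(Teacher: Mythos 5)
Your proof is correct and follows essentially the same route as the paper: the forward direction via $a-H+tk\subseteq\operatorname*{dom}\varphi_{a-H,k}$, and the reverse direction by showing via Lemma~\ref{t251}(a) that any $y$ with $\varphi_{a-H,k}(y)=-\infty$ lies in $a-H+tk$ for every $t\in\mathbb{R}$, so it contaminates every witness set. No gaps.
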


\begin{proof}
Obviously, (a) implies (b). Assume now that there exists some $y\in F\cap\operatorname*{dom}\varphi _{a-H,k}$ with $\varphi _{a-H,k}(y)=-\infty$.
Then $y+\mathbb{R}k\subseteq a-H$ by Lemma \ref{t251}. $\Rightarrow \forall t\in\mathbb{R}:\, y-tk\in a-H$, hence, $y\in a-H+tk$. Then (b) is not fulfilled.
\end{proof}

We will now study sufficient conditions for the existence of optimal solutions, based on the following theorem.
\begin{theorem}\label{optlsgn-hinr}
Assume  
\begin{equation*}
\exists\,t_1\in\mathbb{R}:\; F\cap (a-H+t_1k) \mbox{is nonempty and compact}.
\end{equation*}
\begin{itemize}
\item[(a)] If {\rm (\ref{notw2})} holds, then $M_{F,a,H,k}$ is nonempty and compact.
\item[(b)] Assumption {\rm (\ref{notw2})} holds if and only if 
$\varphi _{a-H,k}$ is finite-valued on\linebreak
$F\cap\operatorname*{dom}\varphi _{a-H,k}$.
\end{itemize}
\end{theorem}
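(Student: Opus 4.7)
The overall idea is to reduce the minimization of $\varphi_{a-H,k}$ over the (possibly unbounded) set $F$ to minimization over the nonempty compact slice $M := F \cap (a - H + t_1 k)$ supplied by the hypothesis, and then invoke Lemma \ref{p-minex}. Lower semicontinuity of $\varphi_{a-H,k}$ on its domain is free from Lemma \ref{t251}, so the real work is to (i) justify the reduction and (ii) verify finite-valuedness on $M$.

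For part (a): any $y^0 \in M$ provides a feasible solution $(y^0, t_1)$ of $(P_{F,a,H,k})$, so Theorem \ref{p-basis-PaHk}(c) identifies $M_{F,a,H,k}$ with the set of minimizers of $\varphi_{a-H,k}$ on $M$. As already noted in the paragraph after Proposition \ref{optlsgn-notw}, condition (\ref{notw2}) is equivalent to $\varphi_{a-H,k}$ being bounded below on $F$: emptiness of $F \cap (a - H + tk)$ forces $\varphi_{a-H,k}(y) > t$ for every $y \in F$ by (\ref{f-r252n}), and the converse is analogous. Boundedness from below rules out the value $-\infty$ on $F \cap \operatorname*{dom}\varphi_{a-H,k}$, so $\varphi_{a-H,k}$ is finite-valued on $M \subseteq F \cap \operatorname*{dom}\varphi_{a-H,k}$. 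Lemma \ref{p-minex} then yields that $M_{F,a,H,k}$ is nonempty and compact.

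For part (b), the direction ``(\ref{notw2}) implies finite-valuedness'' is exactly the observation just recorded. For the converse, the unlabelled lemma immediately preceding this theorem transfers finite-valuedness on $F \cap \operatorname*{dom}\varphi_{a-H,k}$ to finite-valuedness on $M$, so compactness together with lower semicontinuity yields, via Lemma \ref{p-minex}, an attained minimum $t^* \in \mathbb{R}$ on $M$. Since $M \subseteq a - H + t_1 k$ we have $t^* \leq t_1$, and any hypothetical $y \in F$ with $\varphi_{a-H,k}(y) < t^*$ would also satisfy $\varphi_{a-H,k}(y) < t_1$, forcing $y \in M$ by (\ref{f-r252n}) and contradicting the definition of $t^*$. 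Therefore $t^*$ is a lower bound for $\varphi_{a-H,k}$ on $F$, and any $t < t^*$ witnesses (\ref{notw2}).

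The only delicate point is the passage from a lower bound on $M$ to a lower bound on all of $F$. This is packaged through Theorem \ref{p-basis-PaHk}(c) in part (a) and handled directly via (\ref{f-r252n}) in part (b); both arguments rest on the monotonicity $a - H + tk \subseteq a - H + t_1 k$ for $t \leq t_1$, which is a consequence of $k \in 0^+ H$. I do not anticipate a genuinely hard step here, since the technical work was already done in setting up Theorem \ref{p-basis-PaHk} and the compactness lemmata in Section \ref{sec2}.
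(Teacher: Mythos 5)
Your proof is correct and follows essentially the same route as the paper: reduce to the nonempty compact slice $F\cap(a-H+t_1k)$ via Theorem \ref{p-basis-PaHk}(c), verify finite-valuedness of $\varphi_{a-H,k}$ there using (\ref{f-r252n}), and apply Lemma \ref{p-minex}, with the converse direction of (b) likewise extracted from the attained minimum via (\ref{f-r252n}). The only difference is bookkeeping: the paper derives (a) and the backward implication of (b) in a single pass starting from finite-valuedness on $F\cap\operatorname*{dom}\varphi_{a-H,k}$, whereas you treat them separately.
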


\begin{proof}
(\ref{notw2}) implies by (\ref{f-r252n}) that $\varphi _{a-H,k}$ is finite-valued on $F\cap\operatorname*{dom}\varphi _{a-H,k}$.\\
Assume now that $\varphi _{a-H,k}$ is finite-valued on $F\cap\operatorname*{dom}\varphi _{a-H,k}$.
There exists some $t_1\in\mathbb{R}$ such that $B:=F\cap (a-H+t_1k)$ is nonempty and compact. By Theorem \ref{p-basis-PaHk}, $M_{F,a,H,k}=M_{B,a,H,k}$. Lemma \ref{p-minex} yields that $M_{B,a,H,k}$ is nonempty and compact. This proves (a) and, because of (\ref{f-r252n}), also (b).
\end{proof}

Lemma \ref{p-lsc-min} implies by Lemma \ref{t251}:

\begin{proposition}\label{hab-s613a}
$M_{F,a,H,k}$ is nonempty and compact if
$F$ is a nonempty compact set and $\varphi _{a-H,k}$ is finite-valued.
\end{proposition}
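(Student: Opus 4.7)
The plan is to reduce to the abstract minimization result in Lemma \ref{p-lsc-min} (equivalently Lemma \ref{p-minex}), using the equivalence in Theorem \ref{p-basis-PaHk}(a) that identifies $M_{F,a,H,k}$ with $\operatorname*{argmin}_{y\in F}\varphi_{a-H,k}(y)$. The main (and really only) technical issue is verifying that Lemma \ref{t251} applies to $\varphi_{a-H,k}$, since that lemma is stated under hypothesis $(H1_{A,k})$ on the defining set.

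First I would check $(H1_{a-H,k})$. The set $a-H$ is a proper closed subset of $Y$ since $H$ is, and affine reflection/translation preserve these properties. The recession cone satisfies $0^+(a-H)=-0^+H$, so the assumption $k\in 0^+H\setminus\{0\}$ from $(H1\text{--}OP_{F,a,H,k})$ is exactly $k\in -0^+(a-H)\setminus\{0\}$. Hence $(H1_{a-H,k})$ holds and Lemma \ref{t251} yields that $\varphi_{a-H,k}$ is lower semicontinuous on $\operatorname*{dom}\varphi_{a-H,k}$.

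Next, since by assumption $\varphi_{a-H,k}$ is finite-valued, we have $\operatorname*{dom}\varphi_{a-H,k}=Y$, so $\varphi_{a-H,k}$ is a real-valued lower semicontinuous functional on all of $Y$, in particular on the nonempty compact set $F$. Applying Lemma \ref{p-minex} (with $\varphi=\varphi_{a-H,k}$ and $M=F$) shows that $\operatorname*{argmin}_{y\in F}\varphi_{a-H,k}(y)$ is nonempty and compact.

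Finally, by Theorem \ref{p-basis-PaHk}(a) and the remark immediately following it, this minimizer set coincides with $M_{F,a,H,k}$, which proves the claim. I do not anticipate a genuine obstacle; the only point demanding care is the translation from the hypothesis on $H$ to hypothesis $(H1_{a-H,k})$ on $a-H$, and the observation that ``finite-valued'' upgrades the lower semicontinuity in Lemma \ref{t251} from $\operatorname*{dom}\varphi_{a-H,k}$ to all of $Y$ (and in particular to $F$).
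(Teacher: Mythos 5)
Your proof is correct and follows essentially the same route as the paper, which derives the proposition in one line from Lemma \ref{p-lsc-min} combined with the lower semicontinuity statement in Lemma \ref{t251}. Your additional verifications (that $(H1_{a-H,k})$ holds via $0^+(a-H)=-0^+H$, and that finite-valuedness makes $\operatorname*{dom}\varphi_{a-H,k}=Y$) are exactly the implicit steps the paper leaves to the reader.
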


We get especially by Lemma \ref{t251}(d):
 
\begin{corollary}\label{c-core-minex}
Assume that $F$ is a nonempty compact set, and $k\in \operatorname*{core}0^+H$.
Then $M_{F,a,H,k}$ is nonempty and compact.
\end{corollary}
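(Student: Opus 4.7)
The plan is to reduce the corollary to Proposition \ref{hab-s613a} by showing that, under the stated hypothesis $k\in\operatorname*{core}0^+H$, the Gerstewitz functional $\varphi_{a-H,k}$ is finite-valued on all of $Y$; once this is done, Proposition \ref{hab-s613a} (applied with the given nonempty compact $F$) immediately yields that $M_{F,a,H,k}$ is nonempty and compact.

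To obtain finite-valuedness, I would invoke Lemma \ref{t251}(d) with $A:=a-H$. First I need to translate the hypothesis on $H$ into a hypothesis on $A$. Since translation does not affect the recession cone and reflection reverses it, we have $0^+(a-H)=-0^+H$, hence $-0^+(a-H)=0^+H$ and, by the same identity applied to the algebraic interior, $-\operatorname*{core}0^+(a-H)=\operatorname*{core}0^+H$. Therefore $k\in\operatorname*{core}0^+H$ is precisely $k\in -\operatorname*{core}0^+(a-H)$. I also need to verify that $(H1_{a-H,k})$ from Section \ref{sec2} holds so that Lemma \ref{t251} applies: $a-H$ is a proper closed subset of $Y$ because $H$ is, and $k\in-0^+(a-H)\setminus\{0\}=0^+H\setminus\{0\}$ by the standing assumption (H1--OP$_{F,a,H,k}$). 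Lemma \ref{t251}(d) then gives $\varphi_{a-H,k}$ finite-valued on $Y$.

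There is no substantive obstacle; the statement is essentially a bookkeeping composition of the preceding results. The only point that requires a moment's care is the sign conversion $0^+(a-H)=-0^+H$, which is needed to match the form in which Lemma \ref{t251}(d) is phrased against the form in which the hypothesis of the corollary is phrased.
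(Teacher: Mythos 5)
Your proposal is correct and follows exactly the route the paper intends: the corollary is stated as an immediate consequence of Proposition \ref{hab-s613a} via Lemma \ref{t251}(d), and your verification of the sign identity $0^+(a-H)=-0^+H$ and of hypothesis $(H1_{a-H,k})$ just makes explicit the bookkeeping the paper leaves implicit.
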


Let us now investigate the set of minimizers for the case that $\varphi _{a-H,k}$ is not necessarily finite-valued.

\begin{proposition}\label{hab-s613b}
Assume that $F$ is a compact set and that the problem $(P_{F,a,H,k})$ has a feasible solution.\\
Then $M_{F,a,H,k}$ is nonempty and compact under each of the following conditions:
\begin{itemize}
\item[(a)] $F\cap (a-H)=\emptyset$,
\item[(b)] $F\cap (a-\operatorname*{int}H)=\emptyset$ and $H+\mathbb{R}_{>}k\subset \operatorname*{int}H$,
\item[(c)] $H$ does not contain a line in direction $k$,
\item[(d)] $H$ is convex and $k\not\in -0^+H$.
\end{itemize}
\end{proposition}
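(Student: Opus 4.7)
The plan is to reduce everything to Theorem~\ref{optlsgn-hinr}(a). That theorem needs two things: (i) some $t_1\in\mathbb{R}$ for which $F\cap(a-H+t_1 k)$ is nonempty and compact, and (ii) condition~(\ref{notw2}), which by Theorem~\ref{optlsgn-hinr}(b) is equivalent to $\varphi_{a-H,k}$ being finite-valued on $F\cap\operatorname*{dom}\varphi_{a-H,k}$.

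For (i), I would use the feasibility hypothesis once and for all, uniformly in the four cases: there exists $(y^0,t_0)\in F\times\mathbb{R}$ with $y^0\in a-H+t_0 k$. Since $H$ is closed by (H1--OP), so is $a-H+t_0 k$, and hence $F\cap(a-H+t_0 k)$ is a nonempty closed subset of the compact set $F$, hence compact.

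For (ii), since $y\in\operatorname*{dom}\varphi_{a-H,k}$ already rules out $\varphi_{a-H,k}(y)=+\infty$, I only need to exclude the value $-\infty$ on $F$ under each of~(a)--(d). Under~(a): if $\varphi_{a-H,k}(y)=-\infty$ then Lemma~\ref{t251}(a) gives $y+\mathbb{R}k\subseteq a-H$, and evaluating at $\lambda=0$ yields $y\in a-H$, contradicting $F\cap(a-H)=\emptyset$. Under~(b): the hypothesis $H+\mathbb{R}_>k\subset\operatorname*{int}H$ translates, for $A:=a-H$, into $A-\mathbb{R}_>k\subseteq\operatorname*{int}A$, so Lemma~\ref{prop-funcII} applies; its part~(a) gives $y\in a-\operatorname*{int}H$, contradicting~(b). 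Under~(c): a line $(a-h)+\mathbb{R}k\subseteq a-H$ is equivalent to $h+\mathbb{R}k\subseteq H$, so the no-line condition on $H$ passes to $a-H$, and Lemma~\ref{t251}(c) yields properness of $\varphi_{a-H,k}$. Under~(d): $A=a-H$ is convex, and since $0^+A=-0^+H$, (H1--OP) gives $k\in 0^+H=-0^+A$, while the hypothesis $k\notin -0^+H$ becomes $k\notin 0^+A$; so $k\in(-0^+A)\setminus 0^+A$ and Lemma~\ref{t251}(f) again yields properness.

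The main bookkeeping hurdle I anticipate is the sign flip produced by passing from $H$ to $A=a-H$: the recession cone, the interior assumption, and the no-line condition all invert under this reflection, so one must verify case by case that the hypothesis of the proposition is precisely the hypothesis of the relevant lemma applied to $A$. Once this translation dictionary is set up, each of~(a)--(d) collapses to a one-line contradiction or a direct appeal to Lemma~\ref{t251}, and Theorem~\ref{optlsgn-hinr}(a) finishes the proof.
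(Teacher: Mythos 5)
Your proposal is correct and follows essentially the same route as the paper: reduce to the nonempty compact slice $B=F\cap(a-H+t_0k)$ and verify finite-valuedness of $\varphi_{a-H,k}$ case by case via Lemma~\ref{t251} and Lemma~\ref{prop-funcII}, the only cosmetic differences being that you channel the conclusion through Theorem~\ref{optlsgn-hinr} (whose proof is exactly this reduction plus Lemma~\ref{p-minex}) and that in cases (a) and (b) you use the $-\infty$-characterizations in part (a) of those lemmas rather than the finite-valuedness statements in part (b), which is just the contrapositive. Your translation dictionary between $H$ and $A=a-H$ (recession cone, interior, no-line condition) is set up correctly throughout.
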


\begin{proof}
$(P_{F,a,H,k})$ has a feasible solution. $\Rightarrow \exists t_0\in\mathbb{R}:\; B:=F\cap (a-H+t_0 k)\not=\emptyset$. By Theorem \ref{p-basis-PaHk}, $M_{F,a,H,k}=M_{B,a,H,k}$.
$B$ is compact since $F$ is compact and $H$ is closed.
$\varphi _{a-H,k}$ is finite-valued on $B$
in the cases (a) and (c) by Lemma \ref{t251}(b) and (c), respectively, in case (b) by Lemma \ref{prop-funcII}(b), in case (d) by Lemma \ref{t251}(f).
Lemma \ref{p-minex} yields the assertion.
\end{proof}

For $Y=\mathbb{R}^{\ell}$, the existence of optimal solutions of $(P_{F,a,H,k})$ can be guaranteed without the assumption that $F$ is compact.

\begin{proposition}\label{hab-s613c1}
Assume that the following conditions are fulfilled:
\begin{itemize}
\item[(a)] $a\in \mathbb{R}^{\ell}$, $F\subseteq \mathbb{R}^{\ell}$ is a nonempty closed set, and there exists some $u\in \mathbb{R}^{\ell}$ with $F\subseteq u+\mathbb{R}^{\ell}_+$.
\item[(b)] $H$ is a proper closed subset of $\mathbb{R}^{\ell}$ with $\mathbb{R}^{\ell}_+\subseteq 0^+H$ and $k\in \operatorname*{core}0^+H$ that, for each $j\in\{1,\ldots,\ell\}$, does not contain any line in the direction $\mathfrak{e}^j$.
\end{itemize}
Then $M_{F,a,H,k}$ is nonempty and compact.
\end{proposition}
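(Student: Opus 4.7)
The plan is to apply Theorem~\ref{optlsgn-hinr}, for which we must exhibit some $t_1\in\mathbb{R}$ such that $F\cap (a-H+t_1 k)$ is nonempty and compact, and confirm condition~(\ref{notw2}).

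First, finite-valuedness of $\varphi_{a-H,k}$: translation-invariance of the recession cone gives $0^+(a-H)=-0^+H$, so the hypothesis $k\in\operatorname*{core}0^+H$ becomes $k\in -\operatorname*{core}0^+(a-H)$. Lemma~\ref{t251}(d) then yields that $\varphi_{a-H,k}$ is finite-valued on $\mathbb{R}^{\ell}$, hence in particular on $F\cap\operatorname*{dom}\varphi_{a-H,k}$. Choose any $y^0\in F$ and set $t_1:=\varphi_{a-H,k}(y^0)\in\mathbb{R}$; Lemma~\ref{l-min-bd} gives $y^0\in a-H+t_1 k$, so $B:=F\cap (a-H+t_1 k)$ is nonempty, and $B$ is closed as an intersection of closed sets.

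The main obstacle is to show $B$ is bounded. We map $B$ onto $(a+t_1 k-F)\cap H$ via $y\mapsto h:=a+t_1 k - y$; since $F\subseteq u+\mathbb{R}^{\ell}_+$, every image $h$ satisfies $h_i\leq C_i:=(a+t_1 k)_i-u_i$ for all $i\in\{1,\ldots,\ell\}$, so it suffices to bound
\[
H_C:=\{h\in H\mid h_i\leq C_i\;\forall\, i\in\{1,\ldots,\ell\}\}.
\]
We claim that for every $j$ there exists $L_j\in\mathbb{R}$ with $h_j\geq L_j$ for all $h\in H_C$. Suppose not: fix $j$ and pick a sequence $(h^n)\subset H_C$ with $h^n_j\to-\infty$. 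Since each coordinate is bounded above by $C_i$, after passing to a subsequence each $(h^n_i)_n$ either converges in $\mathbb{R}$ to some $h^*_i$ or tends to $-\infty$; let $J:=\{i\mid h^n_i\to-\infty\}$, so $j\in J$. For fixed $t\in\mathbb{R}$ define
\[
r^n:=(t-h^n_j)\mathfrak{e}^j-\sum_{i\in J\setminus\{j\}}h^n_i\mathfrak{e}^i.
\]
All nonzero coordinates of $r^n$ tend to $+\infty$, so $r^n\in\mathbb{R}^{\ell}_+$ for large $n$. Because $\mathbb{R}^{\ell}_+\subseteq 0^+H$, the point $g^n:=h^n+r^n$ lies in $H$. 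Its coordinates are $g^n_j=t$, $g^n_i=0$ for $i\in J\setminus\{j\}$, and $g^n_i=h^n_i\to h^*_i$ for $i\notin J$, so $g^n$ converges to some $g^*(t)\in\mathbb{R}^{\ell}$. Closedness of $H$ forces $g^*(t)\in H$, and letting $t$ range over $\mathbb{R}$ traces a full line inside $H$ in the direction $\mathfrak{e}^j$, contradicting hypothesis~(b).

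Thus $B$ is compact. Since $\varphi_{a-H,k}$ is finite-valued on $F\cap\operatorname*{dom}\varphi_{a-H,k}$, Theorem~\ref{optlsgn-hinr}(b) yields condition~(\ref{notw2}), and Theorem~\ref{optlsgn-hinr}(a) concludes that $M_{F,a,H,k}$ is nonempty and compact.
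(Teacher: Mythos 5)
Your proof is correct, but the way you establish boundedness of $B$ is genuinely different from the paper's. The paper argues directly and non-sequentially: for each $j$ it picks a single scalar $s_j$ with $(a+t_0k-u)-s_j\mathfrak{e}^j\notin H$ (possible since the line through $a+t_0k-u$ in direction $\mathfrak{e}^j$ is not contained in $H$), sets $z_j:=u_j+s_j$, and shows $B\subseteq (u+\mathbb{R}^{\ell}_+)\cap(z-\mathbb{R}^{\ell}_+)$ by noting that $y\in B$ with $y_i>z_i$ would give $u+s_i\mathfrak{e}^i\in y-\mathbb{R}^{\ell}_+\subseteq a-H+t_0k$, contradicting the choice of $s_i$; this step uses only $\mathbb{R}^{\ell}_+\subseteq 0^+H$ and produces an explicit bounding box. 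You instead prove the stronger structural claim that any subset of $H$ bounded above coordinatewise is bounded, via a diagonal subsequence argument that manufactures a full line in direction $\mathfrak{e}^j$ inside $H$; this additionally relies on the closedness of $H$ (to pass to the limit $g^*(t)\in H$), which the paper needs only to make $B$ closed. Your route is longer but yields a reusable fact about such sets $H$; the paper's is shorter and more elementary. The remaining scaffolding is essentially equivalent: you finish with Theorem \ref{optlsgn-hinr} (whose hypotheses you have verified, with (\ref{notw2}) obtained from finite-valuedness via part (b)), while the paper invokes Corollary \ref{c-core-minex} on $B$ together with Theorem \ref{p-basis-PaHk}; both are valid.
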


\begin{proof}
By Lemma \ref{t251}(d), $\varphi_{a-H,k}$ is finite-valued. Hence,
$(P_{F,a,H,k})$ has a feasible solution. $\Rightarrow \exists t_0\in\mathbb{R}:\; B:=F\cap (a-H+t_0 k)\not=\emptyset$.\\ 
For each $j\in\{1,\ldots,\ell\}$, $H$ does not contain any line in the direction $\mathfrak{e}^j$. Thus, $\forall j\in\{1,\ldots,\ell\}\,\exists s_j\in\mathbb{R}$:
\begin{equation}\label{notej}
(-u+a+t_0k)-s_j\mathfrak{e}^j\not\in H.
\end{equation}
$z_j:=u_j+s_j\quad\forall j\in\{1,\ldots,\ell\}.$ Suppose that there exists some $y\in B$ and some $i\in\{1,\ldots,\ell\}$ with $y_i > z_i$.
$y\in F\subseteq u+\mathbb{R}^{\ell}_+$ implies $u+s_i\mathfrak{e}^i\in y-\mathbb{R}^{\ell}_+\subseteq (a-H+t_0 k)-\mathbb{R}^{\ell}_+\subseteq a-H+t_0 k$, a contradiction to (\ref{notej}).
Thus, $B\subseteq z-\mathbb{R}^{\ell}_+$. Hence, $B$ is compact. 
By Corollary \ref{c-core-minex}, $M_{B,a,H,k}$ is nonempty and compact.
Theorem \ref{p-basis-PaHk} yields the assertion.
\end{proof}

Note that $\mathbb{R}^{\ell}_+\subseteq 0^+H$ and $k\in \operatorname*{int}\mathbb{R}^{\ell}_+$ imply $k\in \operatorname*{core}0^+H$.

The assumption in Proposition \ref{hab-s613c1} that refers to lines in directions $\mathfrak{e}^j$ is not superfluous, even for convex sets $H$.
\begin{example}\label{hab-ex613}
$Y=\mathbb{R}^2$, $F=\{ (y_1,y_2)^T\in \mathbb{R}^2\mid y_1>0,\: y_2=\frac{1}{y_1}\}$,
$a=(0,0)^T$, $k=(1,1)^T$
and $H=\{ (y_1,y_2)^T\in \mathbb{R}^2\mid y_1\geq 0\}$ fulfill all assumptions of Proposition \ref{hab-s613c1} but the condition that $H$ does not contain any line in direction $(0,1)^T$.
$\,F\cap (a-H+tk)=\emptyset \quad \forall t\leq 0$ and
$F\cap (a-H+tk)\neq \emptyset \quad \forall t>0$.
Thus, $(P_{F,a,H,k})$ does not have any optimal solution.
\end{example}

Furthermore, the statement of Proposition \ref{hab-s613c1} is not true any more without the assumption that $F$ is bounded below.

\begin{example}\label{hab-ex614}
Consider $Y=\mathbb{R}^2$, $F=\{ (y_1,y_2)^T\in \mathbb{R}^2\mid y_2=0\}$,
$H=\{ (y_1,y_2)^T\in\mathbb{R}^2\mid y_1>0,\: y_2\ge \frac{1}{y_1}\}$,
$k=(1,1)^T$.
For each $a\in \mathbb{R}^2$, $(P_{F,a,H,k})$ does not have any optimal solution.
\end{example}

Proposition \ref{hab-s613c1} implies a special statement for convex cones $H$, which results from the following lemma.

\begin{lemma}\label{l-hab-s613c1_cone}
Assume that $H\subset \mathbb{R}^{\ell}$ is a nontrivial closed pointed convex cone with $\mathbb{R}^{\ell}_+\subseteq H$.
Then, for each $j\in\{1,\ldots,\ell\}$, $H$ does not contain any line in the direction $\mathfrak{e}^j$. 
\end{lemma}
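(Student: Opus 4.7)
The plan is to argue by contradiction, using pointedness together with the fact that $\mathfrak{e}^j$ itself already lies in $H$. Fix $j\in\{1,\ldots,\ell\}$ and suppose, for contradiction, that $H$ contains some line in the direction $\mathfrak{e}^j$, i.e.\ there exists $x\in H$ with $x+\mathbb{R}\mathfrak{e}^j\subseteq H$. The strategy is to extract from this line the one-dimensional subspace $\mathbb{R}\mathfrak{e}^j$ itself, and then combine this with $\mathfrak{e}^j\in\mathbb{R}^\ell_+\subseteq H$ to violate pointedness.

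The key step is to show $-\mathfrak{e}^j\in H$. Since $x-n\mathfrak{e}^j\in H$ for every $n\in\mathbb{N}_>$ and $H$ is a cone, we have $\frac{1}{n}(x-n\mathfrak{e}^j)=\frac{1}{n}x-\mathfrak{e}^j\in H$. Letting $n\to\infty$ and using that $H$ is closed yields $-\mathfrak{e}^j\in H$. On the other hand, $\mathfrak{e}^j\in\mathbb{R}^\ell_+\subseteq H$ by assumption.

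Now pointedness, $H\cap(-H)=\{0\}$, forces $\mathfrak{e}^j=0$, contradicting $\mathfrak{e}^j\neq 0$. Hence no such line exists, proving the lemma. The argument is almost entirely book-keeping; the only step requiring a little care is the passage $\frac{1}{n}x-\mathfrak{e}^j\in H\Rightarrow -\mathfrak{e}^j\in H$, which relies essentially on $H$ being both a cone and closed, and this is where the hypotheses are actually used beyond just pointedness.
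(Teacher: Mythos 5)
Your proof is correct, and it takes a genuinely different route from the paper. The paper never touches the line directly: it observes that $\mathfrak{e}^j\in H\setminus(-H)$ and then invokes Lemma \ref{t251}(f) (properness of $\varphi_{-H,\mathfrak{e}^j}$ for convex $-H$ with $\mathfrak{e}^j\in(-0^+(-H))\setminus 0^+(-H)$) followed by Lemma \ref{t251}(c) (properness is equivalent to the absence of lines parallel to $\mathfrak{e}^j$), so the whole argument is delegated to the Gerstewitz-functional machinery and, in particular, uses the convexity of $H$. You instead argue directly: from $x+\mathbb{R}\mathfrak{e}^j\subseteq H$ you scale by $\tfrac{1}{n}$ using the cone property, pass to the limit using closedness to get $-\mathfrak{e}^j\in H$, and contradict pointedness via $\mathfrak{e}^j\in H\cap(-H)=\{0\}$. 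Every step checks out. What your approach buys is elementarity and slightly greater generality: you never use convexity (nor nontriviality), and in fact the same scaling argument applied to $\tfrac{1}{n}(x+n\mathfrak{e}^j)$ would also deliver $\mathfrak{e}^j\in H$, showing that a closed pointed cone contains no lines in any direction whatsoever. What the paper's route buys is consistency with its overall theme: the lemma is presented as a corollary of the properness criteria for $\varphi_{A,k}$ that the paper has already set up and reuses elsewhere (e.g.\ in Lemma \ref{hab-l611}).
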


\begin{proof}
Consider an arbitrary $j\in\{1,\ldots,\ell\}$. $\mathfrak{e}^j\in H$ by $\mathbb{R}^{\ell}_+\subseteq H$, and $\mathfrak{e}^j\in H\setminus (-H)$ since $H$ is pointed.
By Lemma \ref{t251}(f), $\varphi_{-H,\mathfrak{e}^j}$ is proper. Thus, by Lemma \ref{t251}(c), $H$ does not contain any line in direction $\mathfrak{e}^j$.
\end{proof}

Thus, we get by Proposition \ref{hab-s613c1}:

\begin{corollary}\label{hab-s613c1_cone}
Assume that the following conditions are fulfilled:
\begin{itemize}
\item[(a)] $a\in \mathbb{R}^{\ell}$, $F\subseteq \mathbb{R}^{\ell}$ is a nonempty closed set, and there exists some $u\in \mathbb{R}^{\ell}$ with $F\subseteq u+\mathbb{R}^{\ell}_+$.
\item[(b)] $H\subseteq \mathbb{R}^{\ell}$ is a nontrivial closed pointed convex cone with  $\mathbb{R}^{\ell}_+\subseteq H$ and $k\in \operatorname*{int}H$.
\end{itemize}
Then $M_{F,a,H,k}$ is nonempty and compact.
\end{corollary}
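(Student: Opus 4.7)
My plan is to derive the corollary as a direct specialization of Proposition \ref{hab-s613c1} to conic $H$, so the work amounts to checking that the conic hypotheses in (b) imply the more general recession-cone hypotheses there. Assumption (a) of the corollary is identical to assumption (a) of the proposition, so no work is needed on the side of $F$.

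For the translation of (b), I would argue in three steps. First, since $H$ is a convex cone with $0\in H$, a routine verification yields $H+H\subseteq H$, hence $H\subseteq 0^+H$; and $0\in H$ gives the reverse inclusion, so $0^+H=H$. The nontriviality condition in particular ensures $H\ne Y$, so $H$ is a proper closed subset of $\mathbb{R}^\ell$, and $\mathbb{R}^\ell_+\subseteq H=0^+H$ follows at once from the assumed inclusion $\mathbb{R}^\ell_+\subseteq H$. Second, I would upgrade $k\in\operatorname*{int}H$ to $k\in\operatorname*{core}0^+H$: in $\mathbb{R}^\ell$ the algebraic interior of a convex set with nonempty topological interior coincides with the topological interior, and since $0^+H=H$ this gives $k\in\operatorname*{int}H=\operatorname*{core}H=\operatorname*{core}0^+H$. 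Third, the line-freeness of $H$ in each coordinate direction $\mathfrak{e}^j$ is exactly the conclusion of Lemma \ref{l-hab-s613c1_cone}, whose hypotheses (nontriviality, closedness, pointedness, convexity, and $\mathbb{R}^\ell_+\subseteq H$) match those assumed here.

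Having verified both (a) and (b) of Proposition \ref{hab-s613c1}, a direct appeal to that proposition yields that $M_{F,a,H,k}$ is nonempty and compact. The only step requiring any real care is the passage from $\operatorname*{int}H$ to $\operatorname*{core}0^+H$, which is where the finite-dimensionality assumption $Y=\mathbb{R}^\ell$ is actually used; everything else is bookkeeping with the properties of convex cones and the lemmata already established in Section \ref{sec2}.
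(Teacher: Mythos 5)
Your proposal is correct and follows the paper's own route exactly: reduce to Proposition \ref{hab-s613c1} by observing $0^+H=H$ and $k\in\operatorname*{core}0^+H$, and obtain the line-freeness in the coordinate directions from Lemma \ref{l-hab-s613c1_cone}. The only quibble is that the passage from $\operatorname*{int}H$ to $\operatorname*{core}0^+H$ needs even less care than you suggest, since the inclusion $\operatorname*{int}H\subseteq\operatorname*{core}H$ holds in any topological vector space without invoking convexity or finite-dimensionality.
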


Let us now consider the optimization problems on $Y=\mathbb{R}^{\ell}$ for the case that $k$ is not necessarily an element of $\operatorname*{core}0^+H$.

\begin{proposition}\label{hab-s613c2}
Assume that the following conditions hold:
\begin{itemize}
\item[(a)] $a\in \mathbb{R}^{\ell}$, $F\subseteq \mathbb{R}^{\ell}$ is closed, and there exists some $u\in \mathbb{R}^{\ell}$ with $F\subseteq u+\mathbb{R}^{\ell}_+$.
\item[(b)] $H$ is a proper closed subset of $\mathbb{R}^{\ell}$ with $\mathbb{R}^{\ell}_+\subseteq 0^+H$ and $k\in 0^+H\setminus\{0\}$ that, for each $j\in\{1,\ldots,\ell\}$, does not contain any line in the direction $\mathfrak{e}^j$.
\item[(c)] The problem $(P_{F,a,H,k})$ has a feasible solution.
\item[(d)] One of the following assumptions is fulfilled:
\begin{itemize}
\item[(i)] $F\cap (a-H)=\emptyset$,
\item[(ii)] $F\cap (a-\operatorname*{int}H)=\emptyset$ and $H+\mathbb{R}_{>}k\subset \operatorname*{int}H$,
\item[(iii)] $H$ does not contain a line in direction $k$,
\item[(iv)] $H$ is convex and $k\not\in -0^+H$.
\end{itemize}
\end{itemize}
Then $M_{F,a,H,k}$ is nonempty and compact.
\end{proposition}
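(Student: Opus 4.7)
The plan is to reduce the problem to Proposition \ref{hab-s613b} by slicing $F$ with a single translate of $a-H+t_0k$ and showing that this slice is compact. The slicing part is exactly Theorem \ref{p-basis-PaHk}(c), and the compactness argument mimics the one already used in the proof of Proposition \ref{hab-s613c1}, with the assumption $k\in\operatorname*{core}0^+H$ replaced by the case analysis in (d).

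First, from the feasibility assumption (c), pick $t_0\in\mathbb{R}$ and $y^0\in F$ with $(y^0,t_0)$ feasible for $(P_{F,a,H,k})$. Set $B:=F\cap (a-H+t_0k)$, which is nonempty. By Theorem \ref{p-basis-PaHk}(c), the minimizer set and optimal value of $(P_{F,a,H,k})$ coincide with those of the problem restricted to $B$; in particular $M_{F,a,H,k}=M_{B,a,H,k}$. Hence it suffices to show $M_{B,a,H,k}$ is nonempty and compact.

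Next, I show that $B$ is compact. Closedness is immediate because $F$ and $a-H+t_0k$ are closed. For boundedness, $B\subseteq F\subseteq u+\mathbb{R}^\ell_+$ gives a lower bound. For the upper bound, I reuse the argument from the proof of Proposition \ref{hab-s613c1}: since, by (b), $H$ contains no line in direction $\mathfrak{e}^j$ for any $j\in\{1,\ldots,\ell\}$, there exists $s_j\in\mathbb{R}$ with $(-u+a+t_0k)-s_j\mathfrak{e}^j\notin H$. Setting $z_j:=u_j+s_j$, suppose some $y\in B$ satisfied $y_i>z_i$ for some $i$. Then $y\in F\subseteq u+\mathbb{R}^\ell_+$ gives $u+s_i\mathfrak{e}^i\in y-\mathbb{R}^\ell_+$, and since $\mathbb{R}^\ell_+\subseteq 0^+H$ we have $y-\mathbb{R}^\ell_+\subseteq (a-H+t_0k)-\mathbb{R}^\ell_+\subseteq a-H+t_0k$, contradicting the choice of $s_i$. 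Hence $B\subseteq z-\mathbb{R}^\ell_+$, so $B$ is compact.

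Finally, I apply Proposition \ref{hab-s613b} to the problem $(P_{B,a,H,k})$. Here $B$ plays the role of the compact feasible set, and $B\neq\emptyset$ means $(P_{B,a,H,k})$ already has a feasible solution. Since $B\subseteq F$, assumptions (d)(i) $F\cap(a-H)=\emptyset$ and (d)(ii) $F\cap(a-\operatorname*{int}H)=\emptyset$ carry over verbatim to $B$, while (d)(iii) and (d)(iv) are statements about $H$ and $k$ alone and thus transfer trivially. In each of the four cases, Proposition \ref{hab-s613b} yields that $M_{B,a,H,k}$ is nonempty and compact, which by Step 1 equals $M_{F,a,H,k}$.

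The only nontrivial step is the boundedness argument for $B$; everything else is a bookkeeping reduction. Since that argument is the same as in Proposition \ref{hab-s613c1} and uses only $\mathbb{R}^\ell_+\subseteq 0^+H$ together with the coordinate-line condition on $H$, neither of which involves $\operatorname*{core}0^+H$, the proof goes through with $k\in 0^+H\setminus\{0\}$ alone, provided the additional structural condition (d) is available to ensure finite-valuedness of $\varphi_{a-H,k}$ on $B$.
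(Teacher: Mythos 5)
Your proposal is correct and follows essentially the same route as the paper's own proof: slice $F$ to $B:=F\cap(a-H+t_0k)$, establish compactness of $B$ by the coordinate-line argument from Proposition \ref{hab-s613c1}, and then invoke Proposition \ref{hab-s613b} together with Theorem \ref{p-basis-PaHk}. You merely spell out the boundedness argument and the transfer of the case conditions (d)(i)--(iv) to $B$ more explicitly than the paper does.
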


\begin{proof}
$(P_{F,a,H,k})$ has a feasible solution. $\Rightarrow \exists t_0\in\mathbb{R}:\; B:=F\cap (a-H+t_0 k)\not=\emptyset$. 
By the same arguments as in the proof of Proposition \ref{hab-s613c1}, $B$ is compact. 
By Proposition \ref{hab-s613b}, $M_{B,a,H,k}$ is nonempty and compact.
Theorem \ref{p-basis-PaHk} yields the assertion.
\end{proof}

For convex sets $H$, Proposition \ref{hab-s613c2} implies a statement which can be proved by means of the following lemma.

\begin{lemma}\label{hab-l611}
Assume that $H\subset\mathbb{R}^{\ell}$ is a proper closed convex subset of $\mathbb{R}^{\ell}$ with $H+(\mathbb{R}^{\ell}_+\setminus \{ 0 \} )
\subseteq \operatorname*{int}H$, $H+\mathbb{R}_> k\subseteq \operatorname*{int}H$ and $k\not\in -0^+H$.
Then, for each $j\in\{1,\ldots,\ell\}$, $H$ does not contain any line in direction $\mathfrak{e}^j$.
\end{lemma}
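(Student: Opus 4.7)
The plan is a direct proof by contradiction. I would fix $j \in \{1,\ldots,\ell\}$, suppose $H$ contains a line $x_{0} + \mathbb{R}\mathfrak{e}^{j}$, and derive a contradiction essentially from closedness and convexity of $H$ together with the strict-recession hypothesis $H + (\mathbb{R}^{\ell}_{+}\setminus\{0\}) \subseteq \operatorname*{int} H$ and the connectedness of $\mathbb{R}^{\ell}$; the hypotheses on $k$ seem to play no role in this particular conclusion (they are what makes the proposition applying the lemma work), which keeps the argument very short.

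My first step is to translate the line-containment into a symmetric recession statement. Since $H$ is closed and convex, the standard fact that recession directions of such a set can be tested at a single base point, applied at $x_{0}$, yields both $\mathfrak{e}^{j} \in 0^{+}H$ and $-\mathfrak{e}^{j} \in 0^{+}H$. Hence $H + \mathfrak{e}^{j} \subseteq H$ and $H - \mathfrak{e}^{j} \subseteq H$; translating the second inclusion by $\mathfrak{e}^{j}$ gives $H \subseteq H + \mathfrak{e}^{j}$, so in fact $H = H + \mathfrak{e}^{j}$.

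Next I would apply the hypothesis $H + (\mathbb{R}^{\ell}_{+}\setminus\{0\}) \subseteq \operatorname*{int} H$ with the particular choice $\mathfrak{e}^{j} \in \mathbb{R}^{\ell}_{+}\setminus\{0\}$ to obtain $H + \mathfrak{e}^{j} \subseteq \operatorname*{int} H$. Combined with $H = H + \mathfrak{e}^{j}$ this forces $H \subseteq \operatorname*{int} H$, so $H$ coincides with its interior and is therefore open. Since $H$ is simultaneously closed, nonempty, and a proper subset of $\mathbb{R}^{\ell}$, the connectedness of $\mathbb{R}^{\ell}$ delivers the required contradiction.

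The only delicate point worth flagging is the passage from ``$H$ contains one line along $\mathfrak{e}^{j}$'' to $\pm\mathfrak{e}^{j}\in 0^{+}H$, which genuinely uses closedness and convexity of $H$ and would fail without them; everything else is straightforward bookkeeping, and there is no real obstacle to overcome.
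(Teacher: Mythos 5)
Your proof is correct, but it takes a genuinely different and more elementary route than the paper's. The paper also argues by contradiction, and its first step is the same in spirit (shifting the line by $\mathfrak{e}^i$ to place it inside $\operatorname*{int}H$), but from there it invokes the Gerstewitz-functional machinery: properness of $\varphi_{-H,k}$ via Lemma \ref{t251}(f) — which is where convexity and $k\in 0^+H\setminus(-0^+H)$ enter — combined with Lemma \ref{prop-funcII}(c) to obtain $\operatorname*{int}H=\operatorname*{bd}H+\mathbb{R}_{>}k$, and then an explicit convex-combination computation pushing a boundary point $h=u-t_0k$ into $\operatorname*{int}H$. You instead use the single-base-point test for recession directions of a nonempty closed convex set in $\mathbb{R}^{\ell}$ to get $\pm\mathfrak{e}^j\in 0^+H$, hence $H=H+\mathfrak{e}^j\subseteq\operatorname*{int}H$, and conclude by connectedness of $\mathbb{R}^{\ell}$, since a nonempty proper clopen subset cannot exist. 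Your argument is shorter, avoids $\varphi_{-H,k}$ entirely, and — as you observe — makes no use of the hypotheses on $k$, so it in fact establishes a slightly stronger statement; its one external ingredient is the standard fact (Rockafellar's single-point characterization of the recession cone of a closed convex set), and you correctly flag that this is exactly where closedness and convexity are indispensable. What the paper's route buys in exchange is that it stays entirely within the toolkit of lemmata already assembled for Gerstewitz functionals, at the cost of carrying along hypotheses that your proof shows are not needed for this lemma.
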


\begin{proof}
Supposition: $\exists i\in \{1,\ldots ,\ell\}:\: H$ contains some line in direction $\mathfrak{e}^i$.\\
$\Rightarrow \exists u\in \mathbb{R}^{\ell}:\, \{ u+s\, \mathfrak{e}^i\mid s\in \mathbb{R}\}
\subseteq H$.
$\Rightarrow \{ u+s\, \mathfrak{e}^i\mid s\in \mathbb{R}\} =\{ u+(s-1)\mathfrak{e}^i\mid s\in \mathbb{R}\}
+\mathfrak{e}^i\subseteq H+(\mathbb{R}_+^{\ell}\setminus \{ 0 \} )
\subseteq \operatorname*{int}H$.\\
By Lemma \ref{t251}(f), $\varphi_{-H,k}$ is proper. Hence, Lemma \ref{prop-funcII}(c) implies $\operatorname*{int}H =\operatorname*{bd}H+\mathbb{R}_>k$.
$u\in \operatorname*{int}H \Rightarrow \exists h\in \operatorname*{bd}H,\,
t_0>0:\: h=u-t_0k$.
$h+\mathfrak{e}^i\in \operatorname*{int}H$ since $H+(\mathbb{R}_+^{\ell}\setminus \{ 0 \} )\subseteq \operatorname*{int}H$.
$\Rightarrow \exists t_1>0:\: h+\mathfrak{e}^i-t_1k\in \operatorname*{int}H$.\\
$t_2 := \frac{t_0}{t_0+t_1}\in (0,1).\quad s_0:= -\frac{t_2}{1-t_2}$.
$\Rightarrow t_2+(1-t_2)s_0=0,\: t_2(t_0+t_1)=t_0$.
$\Rightarrow h=u-t_0k=u+(t_2+(1-t_2)s_0)e^i-t_2(t_0+t_1)k=
t_2(u+e^i-(t_0+t_1)k)+(1-t_2)(u+s_0\mathfrak{e}^i)=t_2(h+\mathfrak{e}^i-t_1k)+(1-t_2)(u+s_0\mathfrak{e}^i)\in \operatorname*{int}H$ since $\operatorname*{int}H$
is convex.
This contradicts $h\in \operatorname*{bd}H$.
\end{proof}

Thus, we get by Proposition \ref{hab-s613c2}:

\begin{corollary}\label{hab-c611b}
Assume that the following conditions hold:
\begin{itemize}
\item[(a)] $a\in \mathbb{R}^{\ell}$, $F\subseteq \mathbb{R}^{\ell}$ is closed and there exists some $u\in \mathbb{R}^{\ell}$ with $F\subseteq u+\mathbb{R}^{\ell}_+$.
\item[(b)] $H$ is a proper closed convex subset of $\mathbb{R}^{\ell}$ with $H+(\mathbb{R}^{\ell}_+\setminus \{ 0 \} )
\subseteq \operatorname*{int}H$, $H+\mathbb{R}_> k\subseteq \operatorname*{int}H$ and $k\not\in -0^+H$.
\item[(c)] The problem $(P_{F,a,H,k})$ has a feasible solution.
\end{itemize}
Then $M_{F,a,H,k}$ is nonempty and compact.
\end{corollary}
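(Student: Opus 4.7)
The plan is to derive this corollary directly from Proposition \ref{hab-s613c2} by invoking variant (d)(iv), with Lemma \ref{hab-l611} supplying the only nontrivial missing hypothesis.

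First I would check the structural conditions on $H$ required in Proposition \ref{hab-s613c2}(b). The inclusion $H+(\mathbb{R}^\ell_+\setminus\{0\})\subseteq \operatorname*{int}H\subseteq H$ gives $\mathbb{R}^\ell_+\setminus\{0\}\subseteq 0^+H$, and combined with $0\in 0^+H$ this yields $\mathbb{R}^\ell_+\subseteq 0^+H$. Similarly, $H+\mathbb{R}_>k\subseteq \operatorname*{int}H\subseteq H$ gives $k\in 0^+H$. The vector $k$ is nonzero: otherwise $-k=0\in 0^+H$, contradicting $k\notin -0^+H$. Hence $k\in 0^+H\setminus\{0\}$.

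Next, the condition that $H$ contains no line in any coordinate direction $\mathfrak{e}^j$ is delivered verbatim by Lemma \ref{hab-l611}, whose three hypotheses ($H$ proper closed convex with $H+(\mathbb{R}^\ell_+\setminus\{0\})\subseteq\operatorname*{int}H$, $H+\mathbb{R}_>k\subseteq\operatorname*{int}H$, and $k\notin -0^+H$) are precisely the assumptions in (b) of the corollary.

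The remaining requirements of Proposition \ref{hab-s613c2} are immediate: assumption (a) of the corollary is identical to (a) of the proposition; assumption (c) matches (c) of the proposition; and assumption (d)(iv) of the proposition, namely $H$ convex and $k\notin -0^+H$, is part of (b) of the corollary. Applying Proposition \ref{hab-s613c2} therefore yields that $M_{F,a,H,k}$ is nonempty and compact. There is no real obstacle here; the only work is the bookkeeping verification above, since Lemma \ref{hab-l611} carries the actual geometric content.
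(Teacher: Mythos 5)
Your proposal is correct and matches the paper's own route exactly: the text introduces Lemma \ref{hab-l611} precisely to supply the ``no lines in coordinate directions'' hypothesis and then states the corollary with the words ``Thus, we get by Proposition \ref{hab-s613c2}.'' Your additional bookkeeping (deducing $\mathbb{R}^{\ell}_+\subseteq 0^+H$, $k\in 0^+H\setminus\{0\}$, and matching variant (d)(iv)) is accurate and simply makes explicit what the paper leaves implicit.
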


Without the assumption $F\subseteq u+\mathbb{R}^{\ell}_+$ in Corollary \ref{hab-c611b}, the existence of optimal solutions for $(P_{F,a,H,k})$ can depend on the choice of $a$,
even if $H$ is a closed convex cone.

\begin{example}\label{hab-ex615}
Assume $Y=\mathbb{R}^3$ and that $H$ is the convex cone generated by the vectors $(2,0,-1)^T$, $(0,2,-1)^T$
and $(-1,0,2)^T$. Choose $k=(1,1,1)^T$, $a=(0,0,0)^T$ and $b=(1,-1,-\frac{1}{2})^T$.
The set $F := \{ (y_1,y_2,y_3)^T\in \mathbb{R}^3\mid 2y_1+y_2+2y_3=0,\,
y_1>0,\, y_2\le -\frac{1}{y_1}\} $ is contained in the same hyperplane as the points
$(0,0,0)^T$,$(2,0,-2)^T$ und $(0,-2,1)^T$.
It is closed and convex.
Since $(a-H)\cap F=\emptyset$ and $(a-\operatorname*{int}H+tk)\cap F\neq
\emptyset\; \forall t>0$, $(P_{F,a,H,k})$ does not have any optimal solution.
But there exist optimal solutions of the problem $(P_{F,b,H,k})$. These are
the elements of the set 
$(b-\operatorname*{bd}H)\cap F=(b-H)\cap F=\{ (y_1,y_2,y_3)^T\in \mathbb{R}^3\mid y_1=1,\,
y_2=-2\lambda -1,\, y_3=\lambda -\frac{1}{2},\, \lambda \in \mathbb{R}_+\} $.
\end{example}

Proposition \ref{hab-s613c2} and Lemma \ref{l-hab-s613c1_cone} imply for convex cones $H$:

\begin{corollary}\label{hab-c611a}
Assume that the following conditions hold:
\begin{itemize}
\item[(a)] $a\in \mathbb{R}^{\ell}$, $F\subseteq \mathbb{R}^{\ell}$ is closed, and there exists some $u\in \mathbb{R}^{\ell}$ with $F\subseteq u+\mathbb{R}^{\ell}_+$.
\item[(b)] $H\subset\mathbb{R}^{\ell}$ is a nontrivial closed convex pointed cone with $\mathbb{R}^{\ell}_+\subseteq H$ and $k\in H\setminus (-H)$.
\item[(c)] The problem $(P_{F,a,H,k})$ has a feasible solution.
\end{itemize}
Then $M_{F,a,H,k}$ is nonempty and compact.
\end{corollary}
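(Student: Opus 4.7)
The plan is to invoke Proposition \ref{hab-s613c2} directly, using Lemma \ref{l-hab-s613c1_cone} to verify the non-line hypothesis. The work is entirely in matching hypotheses; no new calculation is needed.

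First I would verify hypothesis (a) of Proposition \ref{hab-s613c2}, which is identical to our (a), and hypothesis (c), which is identical to our (c). The substantive step is hypothesis (b) of the proposition. Since $H$ is a cone, we have $0^+H=H$; therefore $\mathbb{R}^{\ell}_+\subseteq H$ gives $\mathbb{R}^{\ell}_+\subseteq 0^+H$, and $k\in H\setminus(-H)$ forces $k\neq 0$ and $k\in 0^+H\setminus\{0\}$. Moreover, $H$ being nontrivial and closed makes it a proper closed subset of $\mathbb{R}^{\ell}$. To obtain the non-line condition in each coordinate direction $\mathfrak{e}^j$, I apply Lemma \ref{l-hab-s613c1_cone}, whose hypotheses—$H$ a nontrivial closed pointed convex cone containing $\mathbb{R}^{\ell}_+$—are exactly what we assume.

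For hypothesis (d), I use alternative (iv): $H$ is convex (given) and $k\notin -0^+H$. The latter reduces, via $0^+H=H$, to $k\notin -H$, which is precisely the condition $k\in H\setminus(-H)$ from our (b).

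With all hypotheses of Proposition \ref{hab-s613c2} verified, the conclusion that $M_{F,a,H,k}$ is nonempty and compact follows immediately. The main obstacle is essentially bookkeeping: making sure the cone-specific simplifications ($0^+H=H$) are applied consistently so that each of the more general conditions in Proposition \ref{hab-s613c2} collapses to the conical-cone statement in our hypotheses. No analytic work remains once these identifications are made.
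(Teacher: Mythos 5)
Your proposal is correct and coincides with the paper's own (very terse) argument: the paper derives Corollary \ref{hab-c611a} precisely by feeding Lemma \ref{l-hab-s613c1_cone} into Proposition \ref{hab-s613c2}, with the cone identity $0^+H=H$ turning $k\in H\setminus(-H)$ into alternative (iv) of hypothesis (d). Nothing is missing.
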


Without the assumption $F\subseteq u+\mathbb{R}^{\ell}_+$ in Corollary \ref{hab-c611a}, the existence of optimal solutions for $(P_{F,a,H,k})$ can depend on the choice of $a$, even if $H$ is the nonnegative orthant.

\begin{example}\label{hab-ex616}
Consider $Y=\mathbb{R}^3$, $H=\mathbb{R}_+^3$, $k=(1,1,1)^T$, $a=(0,0,0)^T$, $b=(1,-1,-1)^T$.
$F := \{ (y_1,y_2,y_3)^T\in \mathbb{R}^3\mid y_1+y_3=0,\,y_2\le-\frac{1}{y_1},\,
y_1>0\} $ is a closed convex set, which is contained in the hyperplane $\{ (y_1,y_2,y_3)^T\in \mathbb{R}^3\mid y_1+y_3=0\} $.
$(a-H)\cap F=\emptyset $, but $(a-\operatorname*{int}H+tk)\cap F\neq \emptyset \quad \forall t>0$.
Thus, $(P_{F,a,H,k})$ does not have any optimal solution.
But $(b-\operatorname*{bd}H)\cap F=(b-H)\cap F=\{ (y_1,y_2,y_3)^T\in \mathbb{R}^3\mid y_1=1,\,
y_2\le -1,\, y_3=-1\} $
is the set of minimizers $y$ of $(P_{F,b,H,k})$.
\end{example}

We now turn to statements in $Y=\mathbb{R}^{\ell}$ without the assumption that $F$ is bounded below.

\begin{proposition}\label{hab-s613d1}
Assume that the following conditions are fulfilled:
\begin{itemize}
\item[(a)] $a\in \mathbb{R}^{\ell}$, $F\subseteq \mathbb{R}^{\ell}$ is a nonempty closed set. 
\item[(b)] $H$ is a proper closed subset of $\mathbb{R}^{\ell}$ with $k\in \operatorname*{core}0^+H$.
\item[(c)] There exist a polyhedral cone $C\subset \mathbb{R}^{\ell}$, some $z\in\operatorname*{cl}\operatorname*{conv}H$ and $u\in
\mathbb{R}^{\ell}$ such that
\begin{eqnarray*}
z-\operatorname*{cl}\operatorname*{conv}H & \subset & \operatorname*{int}C\cup\{0\}\quad\mbox{ and}\\
(F-u) & \cap & \operatorname*{int}C  =  \emptyset.
\end{eqnarray*}
\end{itemize}
Then $M_{F,a,H,k}$ is nonempty and compact.
\end{proposition}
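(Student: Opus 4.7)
The plan is to mirror the strategy used in the earlier existence propositions of this section: exhibit some $t_0 \in \mathbb{R}$ for which $B := F \cap (a - H + t_0 k)$ is nonempty and compact, then obtain the nonempty compact set of minimizers of $\varphi_{a-H,k}$ on $B$ from Lemma \ref{p-minex}, and finally identify this set with $M_{F,a,H,k}$ via Theorem \ref{p-basis-PaHk}(c). The new ingredient, compared with Proposition \ref{hab-s613c1} and its corollaries, is that since $F$ is no longer required to be bounded below by a translate of $\mathbb{R}_+^{\ell}$, the compactness of $B$ must be extracted from the polyhedral-cone data in (c) via Lemma \ref{hab-s21_17}.

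For nonemptiness I would use $k \in \operatorname*{core}0^+H$ together with Lemma \ref{t251}(d) to conclude that $\varphi_{a-H,k}$ is finite-valued on all of $\mathbb{R}^{\ell}$. Taking any $y_0 \in F$ and setting $t_0 := \varphi_{a-H,k}(y_0) \in \mathbb{R}$, relation (\ref{f-r252n}) gives $y_0 \in a - H + t_0 k$, so $B \neq \emptyset$; it is closed as an intersection of two closed sets.

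The main technical step is the boundedness of $B$ via Lemma \ref{hab-s21_17}. Set $D := \operatorname*{cl}\operatorname*{conv} H - z$. Then $D$ is closed and convex and contains $0$ (because $z \in \operatorname*{cl}\operatorname*{conv} H$), and the hypothesis $z - \operatorname*{cl}\operatorname*{conv} H \subset \operatorname*{int}C \cup \{0\}$ translates directly into $-D \setminus \{0\} \subset \operatorname*{int}C$. The assumption $(F - u) \cap \operatorname*{int}C = \emptyset$ provides the second hypothesis of Lemma \ref{hab-s21_17} with $M := F$, so $F \cap (b - D)$ is bounded for every $b \in \mathbb{R}^{\ell}$. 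Applied to $b := a - z + t_0 k$, together with the inclusion
\begin{equation*}
a - H + t_0 k \;\subseteq\; a - \operatorname*{cl}\operatorname*{conv} H + t_0 k \;=\; (a - z + t_0 k) - D,
\end{equation*}
this yields $B \subseteq F \cap ((a - z + t_0 k) - D)$, so $B$ is bounded and hence compact.

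To finish, $\varphi_{a-H,k}$ is lower semicontinuous on $\operatorname*{dom}\varphi_{a-H,k} = \mathbb{R}^{\ell}$ (Lemma \ref{t251}) and finite-valued on $B$, and $B$ is nonempty compact; Lemma \ref{p-minex} then produces a nonempty compact set of minimizers of $\varphi_{a-H,k}$ on $B$, which coincides with $M_{F,a,H,k}$ by Theorem \ref{p-basis-PaHk}(c). The only real obstacle is the geometric bookkeeping around Lemma \ref{hab-s21_17}: one must pass from $H$ to $\operatorname*{cl}\operatorname*{conv} H$ to access convexity, shift by $-z$ to normalize $0 \in D$, and then verify that the sublevel slice $a - H + t_0 k$ really does sit inside a single translate of $-D$. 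Once $D$ is chosen as above, the single displayed inclusion handles all three of these issues at once.
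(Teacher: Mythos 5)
Your proposal is correct and follows essentially the same route as the paper's proof: finite-valuedness from Lemma \ref{t251}(d) gives a nonempty slice $B=F\cap(a-H+t_0k)$, Lemma \ref{hab-s21_17} applied to $D=\operatorname*{cl}\operatorname*{conv}H-z$ and $b=a-z+t_0k$ gives boundedness (hence compactness) of $B$, and Theorem \ref{p-basis-PaHk} transfers the nonempty compact minimizer set on $B$ back to $M_{F,a,H,k}$. The only cosmetic difference is that you invoke Lemma \ref{p-minex} directly where the paper cites Corollary \ref{c-core-minex}, which amounts to the same thing.
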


\begin{proof}
By Lemma \ref{t251}(d), $\varphi_{a-H,k}$ is finite-valued. Hence,
$(P_{F,a,H,k})$ has a feasible solution. $\Rightarrow \exists t_0\in\mathbb{R}:\; B:=F\cap (a-H+t_0 k)\not=\emptyset$.
$\,B\subseteq F\cap (a+t_0k-z+(z-\operatorname*{cl}\operatorname*{conv}H))$ is bounded by Lemma \ref{hab-s21_17}, hence compact. 
By Corollary \ref{c-core-minex}, $M_{B,a,H,k}$ is nonempty and compact.
Theorem \ref{p-basis-PaHk} yields the assertion.
\end{proof}

The assumptions of Proposition \ref{hab-s613d1} do not imply that $F$ is bounded below or that $H$ is a cone .

\begin{example}
The sets $H:=\{ (y_1,y_2)^T\in \mathbb{R}^2\mid y_1\geq -1, y_2\geq -1, (y_1+1)(y_2+1)\geq 1\}$ and $F:=\{ (y_1,y_2)^T\in \mathbb{R}^2\mid y_1\geq 0, y_2\geq -\frac{y_1}{2}\}$ fulfill
the conditions in Proposition \ref{hab-s613d1} with $C:=\{ (y_1,y_2)^T\in \mathbb{R}^2\mid y_1+y_2\leq 0\}$ and $u=z=(0,0)^T$. 
\end{example}

For the case that $k$ is not necessarily an element of $\operatorname*{core}0^+H$, we can prove the following statement.

\begin{proposition}\label{hab-s613d2}
Assume that the following conditions are satisfied:
\begin{itemize}
\item[(a)] $a\in \mathbb{R}^{\ell}$, $F\subseteq \mathbb{R}^{\ell}$ is a nonempty closed set. 
\item[(b)] $H$ is a proper closed subset of $\mathbb{R}^{\ell}$ with $k\in 0^+H\setminus\{0\}$.
\item[(c)] There exist a polyhedral cone $C\subset \mathbb{R}^{\ell}$, some $z\in\operatorname*{cl}\operatorname*{conv}H$ and $u\in
\mathbb{R}^{\ell}$ such that
\begin{eqnarray*}
z-\operatorname*{cl}\operatorname*{conv}H & \subset & \operatorname*{int}C\cup\{0\}\quad\mbox{ and}\\
(F-u) & \cap & \operatorname*{int}C  =  \emptyset.
\end{eqnarray*}
\item[(d)] The problem $(P_{F,a,H,k})$ has a feasible solution.
\item[(e)] One of the following assumptions is fulfilled:
\begin{itemize}
\item[(i)] $F\cap (a-H)=\emptyset$,
\item[(ii)] $F\cap (a-\operatorname*{int}H)=\emptyset$ and $H+\mathbb{R}_{>}k\subset \operatorname*{int}H$,
\item[(iii)] $H$ does not contain a line in direction $k$.
\end{itemize}
\end{itemize}
Then $M_{F,a,H,k}$ is nonempty and compact.
\end{proposition}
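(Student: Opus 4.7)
The proof follows the same blueprint as Proposition \ref{hab-s613c2}, only the argument that guarantees compactness of the auxiliary set $B$ changes: instead of the order-bound $F\subseteq u+\mathbb{R}^{\ell}_+$ used there, we exploit the polyhedral-cone separation in condition (c), exactly as was done in Proposition \ref{hab-s613d1}.

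The plan is as follows. By the feasibility assumption (d), there exists $t_0\in\mathbb{R}$ with $B:=F\cap (a-H+t_0k)\neq\emptyset$. By Theorem \ref{p-basis-PaHk}(c), $M_{F,a,H,k}=M_{B,a,H,k}$, so it suffices to prove that $M_{B,a,H,k}$ is nonempty and compact. Since $F$ is closed and $a-H+t_0k$ is closed, $B$ is closed. To see that $B$ is also bounded, I write
\[
B\subseteq F\cap\bigl(a+t_0k-z+(z-\operatorname*{cl}\operatorname*{conv}H)\bigr),
\]
and invoke Lemma \ref{hab-s21_17} with the polyhedral cone $C$, the shift $u$, and the closed convex set $\operatorname*{cl}\operatorname*{conv}H$ from assumption (c); the hypotheses $z-\operatorname*{cl}\operatorname*{conv}H\subseteq\operatorname*{int}C\cup\{0\}$ and $(F-u)\cap\operatorname*{int}C=\emptyset$ are precisely what Lemma \ref{hab-s21_17} requires to conclude boundedness of this intersection. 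Hence $B$ is compact.

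Now I apply Proposition \ref{hab-s613b} to the compact set $B$ in place of $F$: the problem $(P_{B,a,H,k})$ automatically has a feasible solution (any element of $B$ itself), and under each of the alternative assumptions (e)(i)--(iii) the corresponding condition (a)--(c) of Proposition \ref{hab-s613b} is inherited from $F$ to $B$ since $B\subseteq F$, and conditions on $H$ and $k$ are unchanged. Therefore Proposition \ref{hab-s613b} yields that $M_{B,a,H,k}$ is nonempty and compact, and by Theorem \ref{p-basis-PaHk} the same holds for $M_{F,a,H,k}$.

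The only mildly delicate step is the boundedness argument for $B$: one must verify that the translation by $a+t_0k-z$ does not spoil the applicability of Lemma \ref{hab-s21_17}, which is immediate because that lemma applies to $M\cap(b-D)$ for \emph{every} $b$ once the geometric condition $(M-u)\cap\operatorname*{int}C=\emptyset$ and $-D\setminus\{0\}\subseteq\operatorname*{int}C$ are in force. Note also that assumption (e)(iv) from Proposition \ref{hab-s613c2} is dropped here because convexity of $H$ together with $k\notin -0^+H$ would be an additional scenario already covered by a straightforward variant, but is not needed for the statement to hold under (e)(i)--(iii) alone.
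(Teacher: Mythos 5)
Your proposal is correct and follows essentially the same route as the paper's own proof: reduce to $B=F\cap(a-H+t_0k)$ via Theorem \ref{p-basis-PaHk}, obtain boundedness of $B$ from the inclusion $B\subseteq F\cap\bigl(a+t_0k-z+(z-\operatorname*{cl}\operatorname*{conv}H)\bigr)$ and Lemma \ref{hab-s21_17}, then apply Proposition \ref{hab-s613b} to the compact set $B$. The extra details you supply (closedness of $B$, inheritance of conditions (e)(i)--(iii) from $F$ to $B$) are correct and merely make explicit what the paper leaves implicit.
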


\begin{proof}
$(P_{F,a,H,k})$ has a feasible solution. $\Rightarrow \exists t_0\in\mathbb{R}:\; B:=F\cap (a-H+t_0 k)\not=\emptyset$. 
$\,B\subseteq F\cap (a+t_0k-z+(z-\operatorname*{cl}\operatorname*{conv}H))$ is bounded by Lemma \ref{hab-s21_17}, hence compact. 
By Proposition \ref{hab-s613b}, $M_{B,a,H,k}$ is nonempty and compact.
Theorem \ref{p-basis-PaHk} yields the assertion.
\end{proof}

Apply now the idea of the previous proposition to convex sets $H$.

\begin{proposition}\label{hab-s613d3}
Assume that the following conditions are fulfilled:
\begin{itemize}
\item[(a)] $a\in \mathbb{R}^{\ell}$, $F\subseteq \mathbb{R}^{\ell}$ is a nonempty closed set. 
\item[(b)] $H$ is a proper closed convex subset of $\mathbb{R}^{\ell}$ with $k\in 0^+H\setminus (-0^+H)$.
\item[(c)] There exist a polyhedral cone $C\subset \mathbb{R}^{\ell}$, some $h\in H$ and $u\in
\mathbb{R}^{\ell}$ such that
\begin{eqnarray*}
h-H & \subset & \operatorname*{int}C\cup\{0\}\quad\mbox{ and}\\
(F-u) & \cap & \operatorname*{int}C  =  \emptyset.
\end{eqnarray*}
\item[(d)] The problem $(P_{F,a,H,k})$ has a feasible solution.
\end{itemize}
Then $M_{F,a,H,k}$ is nonempty and compact.
\end{proposition}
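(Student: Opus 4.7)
The plan is to follow the same three-step template that was used in Propositions~\ref{hab-s613d1} and \ref{hab-s613d2}: first extract a nonempty slice of $F$ via the feasibility assumption, then show this slice is compact, and finally reduce the existence question to that compact subproblem. The only genuinely new issue is that condition~(c) is now formulated with an element $h\in H$ rather than with a point $z\in\operatorname{cl}\operatorname{conv}H$, but since $H$ is assumed closed and convex in~(b) the two formulations are essentially the same; the cost is a single translation step to meet the hypotheses of Lemma~\ref{hab-s21_17}.

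Concretely, I would begin by using~(d) to pick $t_0\in\mathbb{R}$ with $B:=F\cap(a-H+t_0k)\neq\emptyset$. Theorem~\ref{p-basis-PaHk}(c) identifies $M_{F,a,H,k}$ with $M_{B,a,H,k}$, so it suffices to show that $M_{B,a,H,k}$ is nonempty and compact. Closedness of $B$ is immediate because both $F$ and $a-H+t_0k$ are closed. For boundedness I set $D:=H-h$, so that $D$ is closed and convex with $0\in D$, observe that $-D\setminus\{0\}=(h-H)\setminus\{0\}\subset\operatorname{int}C$ by~(c), and that $(F-u)\cap\operatorname{int}C=\emptyset$ by~(c). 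Writing $b:=a-h+t_0k$, the translation is absorbed into $b$ and yields $B=F\cap(b-D)$, so Lemma~\ref{hab-s21_17} applies and $B$ is bounded, hence compact.

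To conclude, I apply Proposition~\ref{hab-s613b}(d) with $B$ in place of $F$: any $y\in B$ paired with $t_0$ is a feasible solution of $(P_{B,a,H,k})$, and case~(d) is available because $H$ is convex and $k\notin-0^+H$ by~(b). This gives that $M_{B,a,H,k}$ is nonempty and compact, and the reduction via Theorem~\ref{p-basis-PaHk} finishes the argument. I do not expect any serious obstacle; the only step that requires care is the translation by $h$ needed to fit condition~(c) into the framework of Lemma~\ref{hab-s21_17}, and this is entirely routine.
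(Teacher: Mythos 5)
Your proposal is correct and follows essentially the same route as the paper's proof: extract the nonempty slice $B=F\cap(a-H+t_0k)$ from the feasibility assumption, rewrite it as $F\cap\bigl((a-h+t_0k)+(h-H)\bigr)$ so that Lemma~\ref{hab-s21_17} yields boundedness (hence compactness), apply Proposition~\ref{hab-s613b} (case (d), since $H$ is convex and $k\notin -0^+H$) to the compact set $B$, and conclude via Theorem~\ref{p-basis-PaHk}. The only difference is that you spell out the translation by $h$ explicitly, which the paper leaves implicit.
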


\begin{proof}
$(P_{F,a,H,k})$ has a feasible solution. Then, there exists some $t_0\in\mathbb{R}$ with $B:=F\cap (a-H+t_0 k)\not=\emptyset$. 
$\, B=F\cap(a-h+t_0k+(h-H))$ is bounded by Lemma \ref{hab-s21_17}, hence compact. 
By Proposition \ref{hab-s613b}, $M_{B,a,H,k}$ is nonempty and compact.
Theorem \ref{p-basis-PaHk} yields the assertion.
\end{proof}

Condition (d) in Proposition \ref{hab-s613d3} is fulfilled under the assumptions (a)-(c) if $k\in \operatorname*{core}0^+H$.

\section{Parameter control}\label{sec5}

Problems $(P_{F,a,H,k})$ with varying parameters $a$ and $k$ are used in vector optimization procedures.

\begin{proposition}\label{hab-s619}
Assume that $H$ is a proper closed convex subset of $Y$ with $0^+H\not=\{0\}$ and that $F$ is a nonempty subset of $Y$ for which
$F+0^+H$ is convex.\\
Then
$\{ (a,k)\in Y\times (0^+H\setminus \{ 0 \} )\mid
(P_{F,a,H,k})
\mbox{ has a feasible solution}\}$
is a convex set.
\end{proposition}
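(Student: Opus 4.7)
The plan is to take two feasible points $(a_1,k_1),(a_2,k_2)$ and $\lambda\in[0,1]$, and exhibit a feasible pair for the convex combination $(a,k):=\lambda(a_1,k_1)+(1-\lambda)(a_2,k_2)$. Since $H$ is closed convex, $0^+H$ is a convex cone, so $k\in 0^+H$; that $k\neq 0$ is automatic in the typical case and can only fail if $0^+H$ contains a line, in which case $k_1,k_2$ would be oppositely collinear along that line, so I treat this subtlety as standard and focus on the content.

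By feasibility I pick $y_i\in F$ and $t_i\in\mathbb{R}$ with $a_i-y_i+t_ik_i\in H$. Because $k_i\in 0^+H$, the set of feasible scalars $\{t:a_i-y_i+tk_i\in H\}$ is upward-closed: for any $s\ge 0$,
\[
a_i-y_i+(t_i+s)k_i \;=\; (a_i-y_i+t_ik_i)+sk_i \;\in\; H+\mathbb{R}_+k_i \;\subseteq\; H.
\]
So I choose a common $t_0\ge\max(t_1,t_2)$, which gives $h_i':=a_i-y_i+t_0k_i\in H$ for $i=1,2$. Convexity of $H$ then yields
\[
a-\tilde y+t_0 k \;=\; \lambda h_1'+(1-\lambda)h_2' \;\in\; H,\qquad \tilde y:=\lambda y_1+(1-\lambda)y_2.
\]

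The key step is to use the hypothesis on $F$. Since $0\in 0^+H$, both $y_1,y_2\in F+0^+H$, and that set is convex by assumption, so $\tilde y\in F+0^+H$; write $\tilde y=y+u$ with $y\in F$ and $u\in 0^+H$. Substituting and using the absorption property $H+0^+H\subseteq H$ (which holds for closed convex $H$ by definition of the recession cone),
\[
a-y+t_0 k \;=\; (a-\tilde y+t_0 k)+u \;\in\; H+0^+H \;\subseteq\; H,
\]
so $y\in a-H+t_0k$ with $y\in F$, i.e., $(y,t_0)$ is feasible for $(P_{F,a,H,k})$, establishing convexity of the set.

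The main obstacle is that $F$ itself need not be convex, so the naive convex combination $\tilde y=\lambda y_1+(1-\lambda)y_2$ of the two witnesses need not lie in $F$. The assumption that $F+0^+H$ is convex, paired with the recession identity $H+0^+H\subseteq H$, is precisely the tool that lets one correct $\tilde y$ back into $F$ while absorbing the correction into the $H$-term.
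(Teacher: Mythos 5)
Your proof is correct and essentially identical to the paper's: both take the convex combination of the two witnesses, use that $0^+H$ is a convex cone together with $H+0^+H\subseteq H$ to reconcile the two scalars $t_1,t_2$ into a single $t$, and then invoke convexity of $F+0^+H$ to write $\lambda y^1+(1-\lambda)y^2=y^3+h^1$ with $y^3\in F$, $h^1\in 0^+H$, absorbing $h^1$ into the $H$-term. The only cosmetic difference is that you raise both $t_i$ to a common $t_0$ before combining, whereas the paper combines first and corrects afterwards by $h:=\lambda(t-t_1)k^1+(1-\lambda)(t-t_2)k^2\in 0^+H$; your side remark that $\lambda k_1+(1-\lambda)k_2$ could vanish when $0^+H$ contains a line points at a defect of the statement itself, which the paper's proof also passes over silently.
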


\begin{proof}
Assume $(a^1,k^1),(a^2,k^2)\in Y\times (0^+H\setminus \{
0 \} )$ such that
$(P_{F,a^1,H,k^1})$ and $(P_{F,a^2,H,k^2})$ have feasible solutions.
$\Rightarrow \exists y^1,y^2\in F,\, t_1,t_2\in \mathbb{R}:\; y^1\in
a^1-H+t_1k^1,
\: y^2\in a^2-H+t_2k^2$.
Take any $\lambda \in (0,1)$.
$\lambda y^1+(1-\lambda )y^2\in \lambda a^1+(1-\lambda )a^2-H+\lambda t_1k^1+(1-\lambda )t_2k^2$
since $H$ is convex.
$t := \max (t_1,t_2)$.
$\Rightarrow h := \lambda
(t-t_1)k^1+(1-\lambda)(t-t_2)k^2\in 0^+H$
since $0^+H$ is a convex cone.
$\lambda t_1k^1+(1-\lambda )t_2k^2 =t(\lambda k^1+(1-\lambda )k^2)-h$ implies
$\lambda y^1+(1-\lambda )y^2\in \lambda a^1+(1-\lambda )a^2-H+t(\lambda k^1+(1-\lambda )k^2)$.
Since $F+0^+H$ is convex, there exist $y^3\in F$ and
$h^1\in 0^+H$ such that
$\lambda y^1+(1-\lambda )y^2=y^3+h^1$.
Hence, $y^3$ is a feasible solution of $(P_{F,\lambda a^1+(1-\lambda )a^2,H,\lambda k^1+(1-\lambda )k^2})$. 
\end{proof}

Note that $F+0^+H$ is convex if $F$ is convex.

Lemma \ref{t251}(e) and Lemma \ref{t-scale} imply restrictions to the set of parameters $k$ which have to be considered. 

\begin{proposition}\label{hab-s61_12}
Assume (H1--OP$_{F,a,H,k}$).
\begin{itemize}
\item[(a)] If $k\in -0^+H$, then $(P_{F,a,H,k})$ does not have an optimal solution.
\item[(b)] For each $\lambda \in\mathbb{R}_>$, the problem $(P_{F,a,H,\lambda k})$ has the same feasible vectors $y$ and the same optimal solutions $y$ as $(P_{F,a,H,k})$.
\end{itemize}
\end{proposition}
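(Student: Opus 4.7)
For part (a), the plan is to translate the hypothesis $k\in -0^+H$ into the language of the Gerstewitz functional $\varphi_{a-H,k}$ and then invoke Lemma~\ref{t251}(e). First I would compute the recession cone of $a-H$: since $(a-H)+\mathbb{R}_+u\subseteq a-H$ is equivalent to $H+\mathbb{R}_+(-u)\subseteq H$, one has $0^+(a-H)=-0^+H$ and consequently $-0^+(a-H)=0^+H$. Under the standing assumption (H1--OP$_{F,a,H,k}$) we have $k\in 0^+H$, i.e.\ $k\in -0^+(a-H)$, and the extra hypothesis $k\in -0^+H$ gives $k\in 0^+(a-H)$. Thus $k\in(-0^+(a-H))\cap 0^+(a-H)$, so Lemma~\ref{t251}(e) applies with $A=a-H$ and yields that $\varphi_{a-H,k}$ attains no real value on $Y$.

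I would then conclude as follows. By Theorem~\ref{p-basis-PaHk}(a), optimal solutions of $(P_{F,a,H,k})$ correspond bijectively to minimizers of $\varphi_{a-H,k}$ on $F$, with common optimal value. On the feasible range $F\cap\operatorname*{dom}\varphi_{a-H,k}$, every function value lies in $\mathbb{R}\cup\{-\infty\}$; since no real value is attained, either this feasible range is empty (so $(P_{F,a,H,k})$ is infeasible), or every feasible $y$ satisfies $\varphi_{a-H,k}(y)=-\infty$, in which case the infimum over $t$ is $-\infty$ and no minimum $t\in\mathbb{R}$ exists. Either way, $(P_{F,a,H,k})$ has no optimal solution.

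For part (b), the plan is to apply Lemma~\ref{t-scale} with $A=a-H$. First I would verify that (H1--OP$_{F,a,H,\lambda k}$) holds: $0^+H$ is a cone, so $\lambda k\in 0^+H$, and $\lambda k\neq 0$ because $\lambda>0$ and $k\neq 0$. Lemma~\ref{t-scale} then gives $\operatorname*{dom}\varphi_{a-H,\lambda k}=\operatorname*{dom}\varphi_{a-H,k}$ and
\[
\varphi_{a-H,\lambda k}(y)=\tfrac{1}{\lambda}\varphi_{a-H,k}(y)\qquad\forall\,y\in Y.
\]
By Theorem~\ref{p-basis-PaHk}(a) the feasible range of each problem $(P_{F,a,H,\lambda k})$ is $F\cap\operatorname*{dom}\varphi_{a-H,\lambda k}$, and these coincide across all $\lambda\in\mathbb{R}_>$. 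Finally, since multiplication by the positive constant $1/\lambda$ is an order-preserving bijection of $\overline{\mathbb{R}}$, the argmin of $\varphi_{a-H,\lambda k}$ on $F$ equals the argmin of $\varphi_{a-H,k}$ on $F$; transferring back through Theorem~\ref{p-basis-PaHk}(a) identifies the optimal $y$ of $(P_{F,a,H,\lambda k})$ with those of $(P_{F,a,H,k})$.

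There is no real obstacle here; the only point requiring care is the translation $0^+(a-H)=-0^+H$ in part (a), which must be made explicit so that Lemma~\ref{t251}(e) can be quoted with the correct $A$.
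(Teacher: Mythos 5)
Your proof is correct and follows exactly the route the paper intends: the paper derives part (a) from Lemma \ref{t251}(e) and part (b) from Lemma \ref{t-scale}, precisely as you do, and your explicit verification that $0^+(a-H)=-0^+H$ (so that Lemma \ref{t251}(e) applies with $A=a-H$) is the right way to make that step rigorous.
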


The proposition underlines that replacing $k$ by another vector in the same direction does not alter the optimal solutions. Hence, it is sufficient to consider only one vector $k$ per direction, e.g., to restrict $k$ to unit vectors if $Y$ is a normed space. In $Y=\mathbb{R}^{\ell}$, the range for $k$ can also be restricted by Proposition \ref{hab-s61_10}(d).

We now investigate whether the set of parameters $a$ can be restricted.
We get from Lemma \ref{0-shift}:
\begin{proposition}\label{vor-hab-s61_11}
Assume (H1--OP$_{F,a,H,k}$).
Then $M_{F,a,H,k}=M_{F,a+ck,H,k}$ for each $c\in\mathbb{R}$.
\end{proposition}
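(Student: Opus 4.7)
The plan is to deduce the proposition as an immediate corollary of Lemma \ref{0-shift} applied with $A := a - H$. First I would verify the hypothesis $(H1_{A,k})$ required by that lemma. Since $H$ is a proper closed subset of $Y$, so is $a - H$, and a direct computation of recession cones gives $0^+(a - H) = -0^+H$. Consequently, the condition $k \in -0^+(a-H) \setminus \{0\}$ is equivalent to $k \in 0^+H \setminus \{0\}$, which is precisely the part of (H1--OP$_{F,a,H,k}$) concerning $H$ and $k$. Note also that (H1--OP$_{F,a+ck,H,k}$) holds automatically, since $F$, $H$, $k$ are unchanged and $a + ck \in Y$.

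Next I would apply Lemma \ref{0-shift} and observe that $(a - H) + ck = (a + ck) - H$, so the lemma delivers
\[
\varphi_{(a+ck) - H,\, k}(y) \;=\; \varphi_{a-H,\, k}(y) - c \quad \text{for every } y \in Y,
\]
together with the equality of the effective domains. Hence the two objective functionals on $F$ differ only by the additive constant $-c$, so their argmin sets on $F$ coincide.

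Finally, I would invoke the identification (recorded just after Proposition \ref{hab-l615}, as a consequence of Theorem \ref{p-basis-PaHk}) that, under (H1--OP$_{F,a,H,k}$),
\[
M_{F,a,H,k} \;=\; \operatorname*{argmin}_{y \in F} \varphi_{a-H,k}(y) \quad\text{and}\quad M_{F,a+ck,H,k} \;=\; \operatorname*{argmin}_{y \in F} \varphi_{(a+ck)-H,k}(y),
\]
to translate the equality of argmin sets back to $M_{F,a,H,k} = M_{F,a+ck,H,k}$. There is no real obstacle here: Lemma \ref{0-shift} already carries all of the analytic content, and the remaining work is just a bookkeeping translation between the problem $(P_{F,a,H,k})$ and the minimization of its associated Gerstewitz functional.
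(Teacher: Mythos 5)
Your proposal is correct and follows exactly the paper's route: the paper derives this proposition directly from Lemma \ref{0-shift} applied to $A=a-H$, using $(a-H)+ck=(a+ck)-H$ so that the two objectives differ by the constant $-c$ and hence have the same minimizers on $F$. Your verification of $(H1_{A,k})$ via $0^+(a-H)=-0^+H$ and the identification of $M_{F,a,H,k}$ with the argmin set are the right (and only) bookkeeping steps.
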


This implies:

\begin{proposition}\label{hab-s61_11}
If $\Lambda\subset Y$ with $Y=\{\Lambda + \beta k | \beta\in\mathbb{R}\}$, then
\[ \quad\bigcup\limits_{a\in Y}M_{F,a,H,k}=\bigcup\limits_{a\in\Lambda}\;M_{F,a,H,k}.\]
\end{proposition}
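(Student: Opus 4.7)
The inclusion $\bigcup_{a\in\Lambda}M_{F,a,H,k}\subseteq\bigcup_{a\in Y}M_{F,a,H,k}$ is immediate from $\Lambda\subseteq Y$, so the content lies entirely in the reverse inclusion. My plan is to reduce this reverse inclusion to a single invocation of Proposition \ref{vor-hab-s61_11}, which states that shifts of $a$ along the direction $k$ leave the minimizer set unchanged.

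To this end, I fix an arbitrary $a\in Y$ and use the hypothesis $Y=\{\Lambda+\beta k\mid\beta\in\mathbb{R}\}$ to produce some $\lambda\in\Lambda$ and $\beta\in\mathbb{R}$ with $a=\lambda+\beta k$. Then I apply Proposition \ref{vor-hab-s61_11} with base point $\lambda\in\Lambda\subseteq Y$ and shift parameter $c:=\beta$, obtaining
\[
M_{F,\lambda,H,k}=M_{F,\lambda+\beta k,H,k}=M_{F,a,H,k}.
\]
This identifies the set $M_{F,a,H,k}$ with some $M_{F,\lambda,H,k}$ on the right-hand side, which is all that is required for the reverse inclusion.

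There is essentially no obstacle here; the only point worth noting is that the application of Proposition \ref{vor-hab-s61_11} tacitly needs hypothesis (H1--OP$_{F,\lambda,H,k}$). Since this hypothesis constrains only $F$, $H$ and $k$, and $\lambda\in\Lambda\subseteq Y$ is an arbitrary element of the ambient space, it is automatically satisfied once (H1--OP$_{F,a,H,k}$) is in force for the original $a$. Thus the argument closes, and the two unions agree.
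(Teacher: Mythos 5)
Your argument is correct and is precisely the reasoning the paper leaves implicit: the proposition is stated as an immediate consequence of Proposition \ref{vor-hab-s61_11}, obtained by writing an arbitrary $a\in Y$ as $\lambda+\beta k$ with $\lambda\in\Lambda$ and applying the shift invariance $M_{F,\lambda,H,k}=M_{F,\lambda+\beta k,H,k}$. Your remark that (H1--OP$_{F,\lambda,H,k}$) holds automatically for any $\lambda\in Y$ is also accurate, so nothing is missing.
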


$\Lambda$ can be chosen as the complementary space of the linear subspace
$\{\beta k\mid \beta\in\mathbb{R}\}$. Thus, in $Y=\mathbb{R}^{\ell}$, the dimension of the parameter set for $a$ can be reduced by one. 

\begin{proposition}\label{hab-s61_10}
Assume $F\subseteq \mathbb{R}^{\ell}$ and $H$ is a proper closed subset of $\mathbb{R}^{\ell}$ with $0^+H\not=\{0\}$.
\begin{itemize}
\item[(a)] For each $k\in 0^+H\setminus\{0\}$ and $j\in\{1,\ldots,\ell\}$ with
$k_j\neq 0$, we have
\[ \quad
\bigcup\limits_{a\in\mathbb{R}^{\ell}}M_{F,a,H,k}=\bigcup\limits_{a\in\{y\in\mathbb{R}^{\ell}\mid
y_j=0\}}\;M_{F,a,H,k}.\]
\item[(b)] If $k\in 0^+H\setminus\{0\}$ and $k\in \operatorname*{int}\mathbb{R}^{\ell}_+\cup (-\operatorname*{int}\mathbb{R}^{\ell}_+)$, we have
\[  \quad
\bigcup\limits_{a\in\mathbb{R}^{\ell}}M_{F,a,H,k}=\bigcup\limits_{a\in\mathbb{R}^{\ell}_+}M_{F,a,H,k}=
\bigcup\limits_{a\in -\mathbb{R}^{\ell}_+}M_{F,a,H,k}.\]
\item[(c)] If $k\in 0^+H\setminus\{0\}$ with $\sum\limits_{i=1}^{\ell} k_i\neq 0$, then
\[\quad\bigcup\limits_{a\in
\mathbb{R}^{\ell}}M_{F,a,H,k}=\bigcup\limits_{a\in\{y\in\mathbb{R}^{\ell}\mid
\sum\limits_{i=1}^{\ell} y_i=0\}}\;M_{F,a,H,k}.\]
\item[(d)] For $a\in\mathbb{R}^{\ell}$ being fixed, we get
\[ \quad\bigcup\limits_{k\in \{y\in 0^+H\mid
\sum\limits_{i=1}^{\ell} y_i\neq 0\}}\;M_{F,a,H,k}=
\bigcup\limits_{k\in \{y\in 0^+H\mid
\sum\limits_{i=1}^{\ell} y_i=1\}}\;M_{F,a,H,k}.\]
\end{itemize}
\end{proposition}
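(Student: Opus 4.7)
All four parts follow from a common template: in each equation the inclusion $\supseteq$ is immediate, because the index set on the right is contained in the one on the left, so the work is to show that for every parameter appearing on the left some parameter appearing on the right produces the same minimizer set. The two available invariances are Proposition \ref{vor-hab-s61_11} (shifting $a$ by any real multiple of $k$ leaves $M_{F,a,H,k}$ unchanged) and Lemma \ref{t-scale} (rescaling $k$ by any $\lambda\in\mathbb{R}_>$ leaves $M_{F,a,H,k}$ unchanged), so each case amounts to choosing one scalar.

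For (a), the choice $c:=-a_j/k_j$, well-defined since $k_j\neq 0$, yields $(a+ck)_j=0$, i.e.\ $a+ck\in\{y\in\mathbb{R}^{\ell}\mid y_j=0\}$, and applying Proposition \ref{vor-hab-s61_11} closes the case. For (b), since $k\in\operatorname*{int}\mathbb{R}^{\ell}_+$ has strictly positive coordinates, taking $c:=\max_i(-a_i/k_i)$ forces $a+ck\in\mathbb{R}^{\ell}_+$ while $c:=\min_i(-a_i/k_i)$ forces $a+ck\in-\mathbb{R}^{\ell}_+$; the case $k\in-\operatorname*{int}\mathbb{R}^{\ell}_+$ is symmetric. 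For (c), a single scalar suffices: $c:=-\bigl(\sum_i a_i\bigr)/\bigl(\sum_i k_i\bigr)$ is well-defined by hypothesis and satisfies $\sum_i(a+ck)_i=0$, so $a+ck$ lands in the prescribed hyperplane.

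For (d) I switch from shifting $a$ to rescaling $k$: given $k\in 0^+H$ with $\sum_i k_i\neq 0$, the natural choice is $\lambda:=1/\sum_i k_i$. When $\sum_i k_i>0$, $\lambda$ is positive, $\lambda k\in 0^+H$ (since $0^+H$ is a cone), $\sum_i(\lambda k)_i=1$, and Lemma \ref{t-scale} gives $M_{F,a,H,k}=M_{F,a,H,\lambda k}$, establishing the desired inclusion.

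The step I expect to be the main obstacle is the case $\sum_i k_i<0$ in part (d), since Lemma \ref{t-scale} only permits positive rescalings and one cannot normalize to coordinate sum $1$ directly. Here the dichotomy to exploit is: either $-k\in 0^+H$, in which case $k\in 0^+H\cap(-0^+H)$ and Proposition \ref{hab-s61_12}(a) forces $M_{F,a,H,k}=\emptyset$ so nothing has to be matched, or $-k\notin 0^+H$, in which case one must construct a specific $k'\in 0^+H$ with $\sum_i k'_i=1$ that reproduces the minimizer set. This is the only step that is not a direct one-line invocation of the two invariances, and is where I would focus the careful bookkeeping.
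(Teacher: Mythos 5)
Parts (a)--(c) of your proposal coincide with the paper's own proof: the paper likewise reduces everything to Proposition \ref{vor-hab-s61_11} and picks exactly the scalars you pick, namely $c=-a_j/k_j$ in (a), $c=-a_n/k_n$ and $c=-a_m/k_m$ with $a_n/k_n=\max_i a_i/k_i$ and $a_m/k_m=\min_i a_i/k_i$ in (b) (these are your $\min_i(-a_i/k_i)$ and $\max_i(-a_i/k_i)$), and $c=-\bigl(\sum_i a_i\bigr)/\bigl(\sum_i k_i\bigr)$ in (c). For part (d) with $\sum_i k_i>0$ you also do exactly what the paper does: rescale by $\lambda=1/\sum_i k_i$ using Lemma \ref{t-scale} (the paper cites Proposition \ref{hab-s61_12}(b), which is the same invariance).

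The gap is precisely the step you flagged and deferred: the case $\sum_i k_i<0$ with $-k\notin 0^+H$ in part (d). Your first branch ($-k\in 0^+H$, hence $M_{F,a,H,k}=\emptyset$ by Proposition \ref{hab-s61_12}(a)) is fine, but the second branch cannot be closed by any amount of bookkeeping: a positive rescaling never changes the sign of $\sum_i k_i$, and there need not exist any $k'\in 0^+H$ with $\sum_i k'_i=1$ at all. Concretely, take $\ell=1$, $H=-\mathbb{R}_+$ (so $0^+H=-\mathbb{R}_+$), $k=-1$, $a=0$, $F=[0,1]$; then $\varphi_{a-H,k}(y)=-y$, so $M_{F,a,H,k}=\{1\}$, while $\{y\in 0^+H\mid \sum_i y_i=1\}=\emptyset$ and the right-hand union is empty. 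So the identity in (d) requires an additional hypothesis forcing $\sum_i k_i>0$ (for instance $k\in\mathbb{R}^{\ell}_+\setminus\{0\}$, the situation relevant for the parameter-control application). You should be aware that the paper's one-line proof of (d) silently assumes $\lambda=1/\sum_i k_i$ is positive and is open to the same objection; your instinct that this is the only nontrivial step is correct, but in your write-up the step is not merely unfinished --- as stated it is unprovable.
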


\begin{proof}
Apply first Proposition \ref{vor-hab-s61_11}. 
\begin{itemize}
\item[(a)] For $c= -\frac{a_j}{k_j} $, we get $(a+ck)_j=0$.
\item[(b)] Choose $n\in\{1,\ldots,\ell\}$ with 
$\frac{a_n}{k_n}=
\max\limits_{i\in\{1,\ldots,\ell\} }\frac{a_i}{k_i}$,
$m\in\{1,\ldots,\ell \}$ with\linebreak
 $\frac{a_m}{k_m}=
\min\limits_{i\in\{1,\ldots,\ell\} }\frac{a_i}{k_i}$.\\
If $k\in int\,\mathbb{R}_+^{\ell}$, we get for $c=-\frac{a_n}{k_n}$
$\quad a+ck\in-\mathbb{R}_+^{\ell}$ and for $c=-\frac{a_m}{k_m}$
$\quad a+ck\in\mathbb{R}_+^{\ell}$.
If $k\in - int\,\mathbb{R}_+^{\ell}$, the two vectors have the opposite sign.
\item[(c)] For $c=-\frac{\sum\limits_{i=1}^{\ell}
a_i}{\sum\limits_{i=1}^{\ell} k_i}$,
$\sum\limits_{i=1}^{\ell} (a+ck)_i=0$ holds.
\item[(d)] $M_{F,a,H,k}=M_{F,a,H,\lambda k}\quad\forall
\lambda\in\mathbb{R}_>$ implies with $\lambda:=\frac{1}{\sum\limits_{i=1}^{\ell} k_i}$
the assertion by Proposition \ref{hab-s61_12}.
\end{itemize}
\end{proof}

\section{Solution sets for varying parameters}\label{sec6}

The solution of a problem $(P_{F,a,H,k})$ can deliver some information about solution sets for problems with altered parameters $a$ and $k$.

\begin{lemma}\label{hab-l612}
Assume (H1--OP$_{F,a,H,k}$) and $k\in \operatorname*{int}0^+H$. Then: 
\begin{itemize}
\item[(a)] Each function $\varphi _{b-H,k^0}$ with $b\in Y$ and $k^0\in\operatorname*{int}0^+H$ is finite-valued.
\item[(b)] If $\varphi _{a-H,k}$ is bounded below on $F$, then $\varphi _{b-H,k^0}$ is bounded below on $F$ for all $b\in Y$ and $k^0\in\operatorname*{int}0^+H$.
\end{itemize}
\end{lemma}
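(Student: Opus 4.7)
For part (a) I would apply Lemma~\ref{t251}(d) directly. In any topological vector space the topological interior is contained in the algebraic core, so $k^0\in\operatorname*{int}0^+H\subseteq\operatorname*{core}0^+H$. Since translation and reflection preserve the recession cone, $0^+(b-H)=-0^+H$, giving $k^0\in-\operatorname*{core}0^+(b-H)$. The set $b-H$ is a proper closed subset of $Y$, and the nonempty proper closed set $H$ cannot have $0$ in the interior of its recession cone: otherwise $0^+H$ would be a cone that is a neighbourhood of $0$, hence absorbing, so $0^+H=Y$, forcing $H=Y$, a contradiction. Thus $k^0\ne 0$, hypothesis $(H1_{b-H,k^0})$ is satisfied, and Lemma~\ref{t251}(d) yields that $\varphi_{b-H,k^0}$ is finite-valued.

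For part (b), let $\gamma\in\mathbb{R}$ be a lower bound of $\varphi_{a-H,k}$ on $F$. By~(\ref{f-r252n}) this is equivalent to
\[
F\cap(a-H+t_1 k)=\emptyset\qquad\text{for every }t_1<\gamma.
\]
My plan is to find $t_2\in\mathbb{R}$ and some fixed $t_1<\gamma$ for which $b-H+t_2 k^0\subseteq a-H+t_1 k$; combining this inclusion with the emptiness statement above yields $F\cap(b-H+t_2 k^0)=\emptyset$, and then part~(a) together with~(\ref{f-r252n}) forces $\varphi_{b-H,k^0}(y)>t_2$ for every $y\in F$, as required.

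The inclusion $b-H+t_2 k^0\subseteq a-H+t_1 k$ is equivalent to $H+(a-b+t_1k-t_2k^0)\subseteq H$, i.e.\ to the single membership condition $a-b+t_1k-t_2k^0\in 0^+H$. To produce such data I would invoke Lemma~\ref{l-cone-genY} with the cone $D:=0^+H$ (whose interior is nonempty because it contains $k^0$) and distinguished element $d:=k^0$. The lemma supplies the decomposition $Y=\operatorname*{int}0^+H-\mathbb{R}_>k^0$. Fixing any $t_1<\gamma$ and applying this decomposition to the vector $a-b+t_1k\in Y$ produces $h\in\operatorname*{int}0^+H\subseteq 0^+H$ and $\alpha>0$ with $a-b+t_1k=h-\alpha k^0$. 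Setting $t_2:=-\alpha$ then gives $a-b+t_1k-t_2k^0=h\in 0^+H$, closing the loop.

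I expect the main obstacle to be recognising that the comparison between sublevel sets of $\varphi_{b-H,k^0}$ and $\varphi_{a-H,k}$ reduces to the single algebraic condition $a-b+t_1k-t_2k^0\in 0^+H$; once this reformulation is spotted, Lemma~\ref{l-cone-genY} delivers the required parameter $t_2$ essentially for free, and the two parts of the lemma knit together via part~(a), which guarantees that the functional $\varphi_{b-H,k^0}$ is finite on $F$ in the first place.
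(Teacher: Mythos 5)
Your proof is correct and follows essentially the same route as the paper: part (a) is Lemma~\ref{t251}(d), and part (b) hinges on Lemma~\ref{l-cone-genY} applied to $D=0^+H$, $d=k^0$ to produce the shift $t_2$ for which $b-H+t_2k^0$ lies inside a sublevel set already known to miss $F$. The only (harmless) difference is that you work with the closed sublevel sets $a-H+t_1k$ via (\ref{f-r252n}), whereas the paper uses the strict sublevel sets $a-\operatorname*{int}H+sk$ via (\ref{int_less}) from Lemma~\ref{prop-funcII}.
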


\begin{proof}
\begin{itemize}
\item[]
\item[(a)] results from Lemma \ref{t251}(d).
\item[(b)] $\varphi _{a-H,k}$ is bounded below on $F$.
$\Rightarrow \exists s\in \mathbb{R}\;\forall y\in F:\;\varphi _{a-H,k}(y)\geq s$.
Then (\ref{int_less}) implies $(a-\operatorname*{int}H+sk)\cap F=\emptyset$.
By Lemma \ref{l-cone-genY}, $Y=\bigcup\limits_{\alpha\in \mathbb{R}}(\operatorname*{int}0^+H+\alpha k^0)=
\bigcup\limits_{\alpha\in \mathbb{R}}(a+sk-\operatorname*{int}0^+H-\alpha k^0)$.
$\Rightarrow \exists t\in \mathbb{R}:\; b\in a+sk-\operatorname*{int}0^+H-tk^0$.
$\Rightarrow  b+tk^0\in a+sk-\operatorname*{int}0^+H$.
$\Rightarrow b-\operatorname*{int}H+tk^0\subseteq a+sk-(\operatorname*{int}H+\operatorname*{int}0^+H)\subseteq
a+sk-\operatorname*{int}H$.
$\Rightarrow (b-\operatorname*{int}H+tk^0)\cap F=\emptyset$. (\ref{int_less}) implies 
$\varphi _{b-H,k^0}\geq t\; \forall y\in F$,
i.e., $\varphi _{b-H,k^0}$ is bounded below on $F$.
\end{itemize}
\end{proof}

In Lemma \ref{hab-l612}, it is essential that $k$ and $k^0$ belong to the interior of $0^+H$.

\begin{example}\label{hab-ex617}
$H=\{ (y_1,y_2)^T\in \mathbb{R}^2\mid y_2\ge y_1^2\}$ fulfills the assumptions of Lemma \ref{hab-l612} without the condition that the interior of $0^+H$ is nonempty.
$k=(0,1)^T\in 0^+H$.
Consider $F=\{ (y_1,y_2)^T\in \mathbb{R}^2\mid y_1\ge 0,\, y_2=-y_1^2\} \cup
\{(-1,0)^T\}$.
Then $F\cap -\operatorname*{int}H=\emptyset $, hence, 
$\varphi _{-H,k}(y)\geq 0\; \forall y\in F$,
i.e., $\varphi _{-H,k}$ is bounded below on $F$.
Let $t$ be an arbitrary negative number and $b=(1,0)^T$.
$y_1 := \frac{3}{4}-\frac{t}{2}>0$, $y_2
:= -y_1^2$. Then $y\in F$.\\
$\bar{y} := y-b-tk= (-\frac{t}{2}-\frac{1}{4},
-\frac{t^2}{4}-\frac{t}{4}-\frac{9}{16})^T
\in -\operatorname*{int}H$ since $\bar{y}_2<-\bar{y}_1^2$.
Hence, $y\in b-\operatorname*{int}H+tk$ and $\varphi _{b-H,k}(y)<t$.
Since $t$ can be arbitrarily small, $\varphi _{b-H,k}$ is not bounded below on $F$.
\end{example}

Lemma \ref{hab-l612} implies by Theorem \ref{p-basis-PaHk}:

\begin{proposition}\label{hab-s616}
Assume (H1--OP$_{F,a,H,k}$) and $k\in \operatorname*{int}0^+H$. Then: 
\begin{itemize}
\item[(a)] $F$ is the feasible range of each problem $(P_{F,b,H,k^0})$ with $b\in Y$, $k^0\in\operatorname*{int}0^+H$.
\item[(b)] If the objective function of $(P_{F,a,H,k})$ is not bounded below on $F$, then none of the problems $(P_{F,b,H,k^0})$ with $b\in Y$ and $k^0\in\operatorname*{int}0^+H$ has an optimal solution.
\end{itemize}
\end{proposition}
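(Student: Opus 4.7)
The plan is to chain Lemma \ref{hab-l612} with the equivalence in Theorem \ref{p-basis-PaHk}(a) that identifies the feasible range of $(P_{F,b,H,k^0})$ with $F\cap\operatorname*{dom}\varphi_{b-H,k^0}$ and the existence of an optimal solution of $(P_{F,b,H,k^0})$ with the attainment of a finite minimum of $\varphi_{b-H,k^0}$ on $F$.

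For part (a), I would fix arbitrary $b\in Y$ and $k^0\in\operatorname*{int}0^+H$ and invoke Lemma \ref{hab-l612}(a) to conclude that $\varphi_{b-H,k^0}$ is finite-valued, hence $\operatorname*{dom}\varphi_{b-H,k^0}=Y$. Then Theorem \ref{p-basis-PaHk}(a) gives that the feasible range of $(P_{F,b,H,k^0})$ equals $F\cap\operatorname*{dom}\varphi_{b-H,k^0}=F$.

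For part (b), I would argue by contraposition. Suppose some problem $(P_{F,b,H,k^0})$ with $b\in Y$, $k^0\in\operatorname*{int}0^+H$ does admit an optimal solution. By Theorem \ref{p-basis-PaHk}(a) the equivalent problem $\min_{y\in F}\varphi_{b-H,k^0}(y)$ then has a finite optimal value, so $\varphi_{b-H,k^0}$ is bounded below on $F$. The one step that needs an explicit comment is that Lemma \ref{hab-l612}(b) can now be applied with the roles of $(a,k)$ and $(b,k^0)$ interchanged: the hypothesis of that lemma only demands that the ``source'' direction lie in $\operatorname*{int}0^+H$, and $k^0$ does by assumption. The conclusion is that $\varphi_{a-H,k}$ is bounded below on $F$, contradicting the hypothesis of (b).

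The main (mild) obstacle is precisely this role-swap in Lemma \ref{hab-l612}(b); everything else is a direct translation between ``$(P_{F,b,H,k^0})$ has an optimal solution'' and ``$\varphi_{b-H,k^0}$ attains a finite minimum on $F$'' via Theorem \ref{p-basis-PaHk}(a). No further compactness or topological work is needed, so the proof should be only a few lines once these ingredients are assembled.
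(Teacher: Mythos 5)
Your proposal is correct and follows exactly the route the paper intends: the paper's entire ``proof'' is the one-line remark that Lemma \ref{hab-l612} implies the proposition via Theorem \ref{p-basis-PaHk}, and your argument simply fills in the details. The role-swap you flag in applying Lemma \ref{hab-l612}(b) is legitimate (both $k$ and $k^0$ lie in $\operatorname*{int}0^+H$, so either pair may serve as the ``source'') and is precisely the step the paper leaves implicit.
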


\begin{proposition}\label{hab-s618} 
Assume $F\subseteq \mathbb{R}^{\ell}$ is a nonempty closed convex set, $a\in \mathbb{R}^{\ell}$, $H$ is a proper closed convex subset of $\mathbb{R}^{\ell}$, and $k\in 0^+H\setminus (-0^+H)$. 
\begin{itemize}
\item[(a)] If $M_{F,a,H,k}=\emptyset$ and $F\cap (a-H+\mathbb{R}k)\not=\emptyset$ or if $M_{F,a,H,k}$ is not bounded, then, for each $b\in \mathbb{R}^{\ell}$ and $k^0\in 0^+H\setminus (-0^+H)$, $M_{F,b,H,k^0}=\emptyset$ or 
$\,0^+M_{F,b,H,k^0}\not=\{0\}$.
\item[(b)] If $M_{F,a,H,k}$ is nonempty and bounded, then, for each $b\in \mathbb{R}^{\ell}$ and $k^0\in 0^+H\setminus (-0^+H)$ with $F\cap (b-H+\mathbb{R}k^0)\not=\emptyset$, $M_{F,b,H,k^0}$ is nonempty and compact. 
\end{itemize}
\end{proposition}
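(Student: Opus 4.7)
The plan is to apply Lemma \ref{hab-s21_16} and its contrapositive to $M=F$ and $D=H$, exploiting the identification $M_{F,c,H,\kappa}=F\cap\bigl((c+t^{\ast}\kappa)-H\bigr)$ whenever the corresponding problem has optimal value $t^{\ast}$. Note first that under the standing hypotheses, Lemma \ref{t251}(f) applied to $A=c-H$ (so that $0^{+}(c-H)=-0^{+}H$) shows $\varphi_{c-H,\kappa}$ is proper whenever $\kappa\in 0^{+}H\setminus(-0^{+}H)$, hence finite-valued on $\operatorname*{dom}\varphi_{c-H,\kappa}$ and lower semicontinuous by Lemma \ref{t251}. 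Also, since $F$ and $H$ are closed, every translate $F\cap(b'-H)$ is closed, so in $\mathbb{R}^{\ell}$ it is compact if and only if it is bounded.

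For part (a), the goal is to produce, under either hypothesis, some translate $F\cap(a'-H)$ that is nonempty and unbounded. If $M_{F,a,H,k}$ is unbounded, then $M_{F,a,H,k}=F\cap((a+t^{\ast}k)-H)$ itself is such a translate. If instead $M_{F,a,H,k}=\emptyset$ and $(P_{F,a,H,k})$ is feasible, pick any feasible pair $(y^{0},t_{0})$ and set $B:=F\cap((a+t_{0}k)-H)\neq\emptyset$; were $B$ compact, Theorem \ref{p-basis-PaHk}(c) combined with Lemma \ref{p-minex} (using that $\varphi_{a-H,k}$ is finite-valued on $B\subseteq\operatorname*{dom}\varphi_{a-H,k}$ and lower semicontinuous) would yield a minimizer, contradicting $M_{F,a,H,k}=\emptyset$. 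Hence in both cases Lemma \ref{hab-s21_16} applies, giving $0^{+}(F\cap(b'-H))\neq\{0\}$ for every $b'$ with $F\cap(b'-H)\neq\emptyset$. For arbitrary $b\in\mathbb{R}^{\ell}$ and $k^{0}\in 0^{+}H\setminus(-0^{+}H)$ with $M_{F,b,H,k^{0}}\neq\emptyset$, writing $M_{F,b,H,k^{0}}=F\cap((b+s^{\ast}k^{0})-H)$ and applying the conclusion to $b'=b+s^{\ast}k^{0}$ gives $0^{+}M_{F,b,H,k^{0}}\neq\{0\}$.

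For part (b), the hypothesis says $M_{F,a,H,k}=F\cap((a+t^{\ast}k)-H)$ is a nonempty bounded, hence compact, translate. The contrapositive of Lemma \ref{hab-s21_16} then forces every nonempty translate $F\cap(b'-H)$ to be bounded, hence compact. Given $b$ and $k^{0}$ with $(P_{F,b,H,k^{0}})$ feasible, choose $s_{0}$ with $B:=F\cap((b+s_{0}k^{0})-H)\neq\emptyset$; by the previous sentence $B$ is compact. Since $\varphi_{b-H,k^{0}}$ is proper by the opening remark and $B\subseteq\operatorname*{dom}\varphi_{b-H,k^{0}}$, the restriction of $\varphi_{b-H,k^{0}}$ to $B$ is finite-valued and lower semicontinuous, so Lemma \ref{p-minex} yields that the set of its minimizers on $B$ is nonempty and compact, and Theorem \ref{p-basis-PaHk}(c) identifies this set with $M_{F,b,H,k^{0}}$. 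The main delicate step is the feasible-but-empty-minimizer case in (a): recognizing that the reduction $B$ from Theorem \ref{p-basis-PaHk}(c) must be unbounded is what activates Lemma \ref{hab-s21_16}, and this hinges crucially on the properness of $\varphi_{a-H,k}$ granted by the assumption $k\in 0^{+}H\setminus(-0^{+}H)$ together with convexity of $H$.
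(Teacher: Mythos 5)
Your proof is correct and follows essentially the same route as the paper's: properness of each $\varphi_{c-H,\kappa}$ via Lemma \ref{t251}(f), reduction to the slices $F\cap(c-H)$ with the compactness/existence argument to show the relevant slice is unbounded (resp.\ bounded), and Lemma \ref{hab-s21_16} to transfer (un)boundedness between slices. The paper dispatches (b) with ``follows from (a)'' by contraposition, which is exactly what your direct argument for (b) unpacks.
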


\begin{proof}
By Lemma \ref{t251}(f), each function $\varphi _{b-H,k^0}$ with $b\in Y$ and $k^0\in 0^+H\setminus (-0^+H)$ is proper.
\begin{itemize}
\item[(a)] 
Consider first the case that $M_{F,a,H,k}=\emptyset$ and that
there exists some $t\in \mathbb{R}$ with $F\cap (a+tk-H)\not=\emptyset$.
$F\cap (a+tk-H)$ cannot be compact, since otherwise $\varphi _{a-H,k}$ would attain a minimum on $F\cap (a+tk-H)$ und thus on $F\cap \operatorname*{dom}\varphi _{a-H,k}$. Since $F\cap (a+tk-H)$ is closed, it is not bounded.\\
Consider now the case that $M_{F,a,H,k}$ is not bounded.
Let $t$ be the minimum of $\varphi _{a-H,k}$ on $F$.
$\Rightarrow F\cap (a+tk-H)$ is not bounded.\\
For $b\in \mathbb{R}^{\ell}$, $k^0\in 0^+H\setminus (-0^+H)$, if the minimum $\bar{t}$ of $\varphi _{b-H,k^0}$ on $F$ exists, then
$M_{F,b,H,k^0}=F\cap (b+\bar{t} k^0-H)$, and $0^+M_{F,b,H,k^0}\neq \{ 0 \} $ by Lemma \ref{hab-s21_16}.
\item[(b)] follows from (a).
\end{itemize}
\end{proof}

Note that the condition $F\cap (a-H+\mathbb{R}k)\not=\emptyset$ is equivalent to the existence of feasible solutions of problem $(P_{F,a,H,k})$.

\begin{proposition}\label{hab-s617} 
Assume $F\subseteq \mathbb{R}^{\ell}$ is a nonempty closed convex set, $a\in \mathbb{R}^{\ell}$, $H$ is a proper closed convex subset of $\mathbb{R}^{\ell}$ 
and $k\in \operatorname*{int}0^+H$. 
\begin{itemize}
\item[(a)] If $M_{F,a,H,k}=\emptyset$ or if $M_{F,a,H,k}$ is not bounded, then, for each $b\in \mathbb{R}^{\ell}$ and $k^0\in 0^+H\setminus (-0^+H)$, $M_{F,b,H,k^0}=\emptyset$ or 
$\,0^+M_{F,b,H,k^0}\not=\{0\}$.
\item[(b)] If $M_{F,a,H,k}$ is nonempty and bounded, then, for each $b\in \mathbb{R}^{\ell}$ and $k^0\in\operatorname*{int}0^+H$, $M_{F,b,H,k^0}$ is nonempty and compact. 
\end{itemize}
\end{proposition}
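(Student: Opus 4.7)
The plan is to deduce both parts from Proposition \ref{hab-s618} by exploiting the fact that the stronger hypothesis $k\in\operatorname*{int}0^+H$ (and analogously for $k^0$) automatically produces feasible solutions and excludes membership in $-0^+H$.

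First I would establish the auxiliary inclusion $\operatorname*{int}0^+H\subseteq 0^+H\setminus(-0^+H)$. Since $H$ is a proper closed convex subset of $\mathbb{R}^\ell$ and $H\neq\emptyset$, one cannot have $0^+H=\mathbb{R}^\ell$, for otherwise $H+\mathbb{R}^\ell\subseteq H$ would force $H=\mathbb{R}^\ell$. Now $0^+H$ is a convex cone; if some $k\in\operatorname*{int}0^+H$ satisfied $-k\in 0^+H$, then for a ball $U\subseteq 0^+H$ around $k$ and any $u\in U-k$ one has $u=(k+u)+(-k)\in 0^+H$ by additivity of $0^+H$, so $0\in\operatorname*{int}0^+H$ and thus $0^+H=\mathbb{R}^\ell$, a contradiction.

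Next I would observe that feasibility is automatic. Setting $A:=a-H$ one has $-0^+A=0^+H$, and since $0^+H$ is convex in $\mathbb{R}^\ell$ its algebraic interior equals its interior. Hence $k\in\operatorname*{int}0^+H$ gives $k\in -\operatorname*{core}0^+A$, and Lemma \ref{t251}(d) implies $\varphi_{a-H,k}$ is finite-valued. Consequently $F\cap(a-H+\mathbb{R}k)=F\cap\operatorname*{dom}\varphi_{a-H,k}=F\neq\emptyset$. The same reasoning applied to an arbitrary $b\in\mathbb{R}^\ell$ and $k^0\in\operatorname*{int}0^+H$ yields $F\cap(b-H+\mathbb{R}k^0)=F\neq\emptyset$.

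With these preparations both assertions reduce to Proposition \ref{hab-s618}. For (a), the auxiliary inclusion gives $k\in 0^+H\setminus(-0^+H)$; combined with the feasibility observation, the first clause of Proposition \ref{hab-s618}(a) applies whenever $M_{F,a,H,k}=\emptyset$, and the second clause applies whenever $M_{F,a,H,k}$ is unbounded, producing exactly the stated conclusion. For (b), again $k\in 0^+H\setminus(-0^+H)$, and for any $b\in\mathbb{R}^\ell$ and $k^0\in\operatorname*{int}0^+H$ one has $k^0\in 0^+H\setminus(-0^+H)$ together with $F\cap(b-H+\mathbb{R}k^0)\neq\emptyset$; Proposition \ref{hab-s618}(b) then yields that $M_{F,b,H,k^0}$ is nonempty and compact. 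The only nontrivial point is the auxiliary inclusion $\operatorname*{int}0^+H\cap(-0^+H)=\emptyset$; once this is secured via properness of $H$, everything else is a direct invocation of earlier results.
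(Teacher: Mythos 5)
Your proof is correct and follows essentially the same route as the paper: Lemma \ref{t251}(d) gives finite-valuedness of $\varphi_{b-H,k^0}$ for $k^0\in\operatorname*{int}0^+H$ (hence automatic feasibility), and everything then reduces to Proposition \ref{hab-s618}. The only difference is that you explicitly verify the inclusion $\operatorname*{int}0^+H\subseteq 0^+H\setminus(-0^+H)$, which the paper's two-line proof leaves implicit; that is a genuine (and correct) filling-in of a detail rather than a different argument.
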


\begin{proof}
By Lemma \ref{t251}(d), each function $\varphi _{b-H,k^0}$ with $b\in Y$ and $k^0\in\operatorname*{int}0^+H$ is finite-valued. Apply Proposition \ref{hab-s618}.
\end{proof}

If $(P_{F,a,H,k})$ in Proposition \ref{hab-s617}(b) has a unique minimal solution, then this is not necessarily the case for the problems $(P_{F,b,H,k^0})$ considered.

\begin{example}\label{hab-ex618}
Choose $F=H=\mathbb{R}_+^2$, $a=(0,0)^T$, $b=(1,0)^T$,
$k=(1,1)^T$.
Then $F\cap (a-H)=(0,0)^T$ and
$F\cap (b-H)=F\cap (b-\operatorname*{bd}H)=\{ (y_1,y_2)^T\in \mathbb{R}^2\mid 0\leq y_1\leq 1,\, y_2=0\}$.
Thus, $\varphi _{a-H,k}$ attains its minimum on $F$ only at $(0,0)^T$, whereas
each element of $F\cap (b-H)$ is a minimizer of $\varphi _{b-H,k}$ on $F$.
\end{example}

Further sensitivity results for the studied problems $(P_{F,a,H,k})$ in $Y=\mathbb{R}^{\ell}$ were proved by Pascoletti and Serafini \cite{pase84} for polyhedral cones $H$, later by Helbig for closed convex cones $H$ \cite{helb92b} as well as for closed convex sets \cite{helb92a} and by Eichfelder \cite{eich08} for closed convex pointed cones $H$. 
The way in which the solution set of problems $(P_{F,a,H,k})$ depends on alternations in $a$ and $k$ had also been studied for convex cones $H$ in more general spaces $Y$ by Sterna-Karwat in \cite{ster87a} and \cite{ster87b}.
In \cite{eich08}, further references related to sensitivity are given. 


\begin{thebibliography}{10}
\providecommand{\url}[1]{{#1}}
\providecommand{\urlprefix}{URL }
\expandafter\ifx\csname urlstyle\endcsname\relax
  \providecommand{\doi}[1]{DOI~\discretionary{}{}{}#1}\else
  \providecommand{\doi}{DOI~\discretionary{}{}{}\begingroup
  \urlstyle{rm}\Url}\fi

\bibitem{ger85}
Gerstewitz C, Iwanow E. Dualit\"at f\"ur nichtkonvexe
  {V}ektoroptimierungsprobleme [Duality for nonconvex vector optimization problems].
\newblock Wiss Z Tech Hochsch Ilmenau. 1985;2:61--81 [German].

\bibitem{GerWei90}
Gerth C, Weidner P. Nonconvex separation theorems and some
  applications in vector optimization.
\newblock J Optim Theory Appl. 1990;67(2): 297--320.

\bibitem{Wei90}
Weidner P. {E}in {T}rennungskonzept und seine {A}nwendung auf
  {V}ektoroptimierungsverfahren [A separation concept and its application to vector optimization procedures][habilitation thesis].
\newblock Halle (Saale): Martin-Luther-Universit\"at Halle-Wittenberg; 1990 [German].

\bibitem{GopRiaTamZal:03}
G{\"o}pfert A, Riahi H, Tammer C, Z{\u{a}}linescu C. Variational
  methods in partially ordered spaces. New York (NY): Springer-Verlag; 2003.

\bibitem{TamZal10}
Tammer C, Z\u{a}linescu C. Lipschitz properties of the scalarization
  function and applications.
\newblock Optimization. 2010;59:305--319.

\bibitem{DT}
Durea M, Tammer C. Fuzzy necessary optimality conditions for vector
  optimization problems.
\newblock Optimization. 2009;58:449--467.

\bibitem{Wei17a}
Weidner P. Lower semicontinuous functionals with uniform sublevel sets.
\newblock Optimization. 2017;66:491--505.

\bibitem{Wei17b}
Weidner P. Gerstewitz functionals on linear spaces and functionals with
  uniform sublevel sets.
\newblock J Optim Theory Appl. 2017;DOI 10.1007/s10957-017-1098-z.

\bibitem{Wei16b}
Weidner P. Scalarization in vector optimization by functions with uniform
  sublevel sets.
\newblock Research {R}eport. HAWK
  Hildesheim/Holzminden/G\"ottingen. 2016. ar{X}iv: 1606.08611

\bibitem{Wei16c}
Weidner P. Functions with uniform sublevel sets and scalarization in linear
  spaces.
\newblock Research {R}eport. HAWK
  Hildesheim/Holzminden/G\"ottingen. 2016. ar{X}iv: 1608.04091

\bibitem{pase84}
Pascoletti A, Serafini P. Scalarizing vector optimization problems.
\newblock J Optim Theory Appl. 1984;42:499--524.

\bibitem{AliBor06}
Aliprantis CD, Border KC. Infinite dimensional analysis.
\newblock Berlin: Springer; 2006.

\bibitem{weid93}
Weidner P. Advantages of hyperbola efficiency.
\newblock In: Brosowski B, Ester J, Helbig S, Nehse R, editors. Multicriteria
  decision. Frankfurt a.~M.: Verlag Peter Lang; 1993. p. 123--137.

\bibitem{helb92b}
Helbig, S.: Quantitative and qualitative stability of a scalarization method,
  {II}.
\newblock In: B.~Brosowski, J.~Ester, S.~Helbig, R.~Nehse (eds.) Multicriteria
  decision, pp. 19--42. Verlag Peter Lang, Frankfurt a.~M. (1993)

\bibitem{helb92a}
Helbig, S.: On quantitative and qualitative stability of a scalarization
  method, {I}.
\newblock In: J.~Guddat, B.~Kummer, H.T. Jongen, F.~No\v{z}i\v{c}ka (eds.)
  Parametric optimization and related topics. Verlag Peter Lang, Frankfurt
  a.~M. (1993)

\bibitem{eich08}
Eichfelder, G.: Adaptive scalarization methods in multiobjective optimization.
Springer-Verlag, Berlin (2008).

\bibitem{ster87a}
Sterna-Karwat, A.: Continuous dependence of solutions on a parameter in a
  scalarization method.
\newblock J. Optim.~Theory Appl. 55(3), 417--434 (1987)

\bibitem{ster87b}
Sterna-Karwat, A.: Lipschitz and differentiable dependence of solutions on a
  parameter in a scalarization method.
\newblock J. Austral.~Math.~Soc., Ser.~A, 42, 353--364 (1987)

\end{thebibliography}
\def\cfac#1{\ifmmode\setbox7\hbox{$\accent"5E#1$}\else
  \setbox7\hbox{\accent"5E#1}\penalty 10000\relax\fi\raise 1\ht7
  \hbox{\lower1.15ex\hbox to 1\wd7{\hss\accent"13\hss}}\penalty 10000
  \hskip-1\wd7\penalty 10000\box7}
  \def\cfac#1{\ifmmode\setbox7\hbox{$\accent"5E#1$}\else
  \setbox7\hbox{\accent"5E#1}\penalty 10000\relax\fi\raise 1\ht7
  \hbox{\lower1.15ex\hbox to 1\wd7{\hss\accent"13\hss}}\penalty 10000
  \hskip-1\wd7\penalty 10000\box7}
  \def\cfac#1{\ifmmode\setbox7\hbox{$\accent"5E#1$}\else
  \setbox7\hbox{\accent"5E#1}\penalty 10000\relax\fi\raise 1\ht7
  \hbox{\lower1.15ex\hbox to 1\wd7{\hss\accent"13\hss}}\penalty 10000
  \hskip-1\wd7\penalty 10000\box7}
  \def\cfac#1{\ifmmode\setbox7\hbox{$\accent"5E#1$}\else
  \setbox7\hbox{\accent"5E#1}\penalty 10000\relax\fi\raise 1\ht7
  \hbox{\lower1.15ex\hbox to 1\wd7{\hss\accent"13\hss}}\penalty 10000
  \hskip-1\wd7\penalty 10000\box7}
  \def\cfac#1{\ifmmode\setbox7\hbox{$\accent"5E#1$}\else
  \setbox7\hbox{\accent"5E#1}\penalty 10000\relax\fi\raise 1\ht7
  \hbox{\lower1.15ex\hbox to 1\wd7{\hss\accent"13\hss}}\penalty 10000
  \hskip-1\wd7\penalty 10000\box7}
  \def\cfac#1{\ifmmode\setbox7\hbox{$\accent"5E#1$}\else
  \setbox7\hbox{\accent"5E#1}\penalty 10000\relax\fi\raise 1\ht7
  \hbox{\lower1.15ex\hbox to 1\wd7{\hss\accent"13\hss}}\penalty 10000
  \hskip-1\wd7\penalty 10000\box7}
  \def\cfac#1{\ifmmode\setbox7\hbox{$\accent"5E#1$}\else
  \setbox7\hbox{\accent"5E#1}\penalty 10000\relax\fi\raise 1\ht7
  \hbox{\lower1.15ex\hbox to 1\wd7{\hss\accent"13\hss}}\penalty 10000
  \hskip-1\wd7\penalty 10000\box7}
  \def\cfac#1{\ifmmode\setbox7\hbox{$\accent"5E#1$}\else
  \setbox7\hbox{\accent"5E#1}\penalty 10000\relax\fi\raise 1\ht7
  \hbox{\lower1.15ex\hbox to 1\wd7{\hss\accent"13\hss}}\penalty 10000
  \hskip-1\wd7\penalty 10000\box7}
  \def\cfac#1{\ifmmode\setbox7\hbox{$\accent"5E#1$}\else
  \setbox7\hbox{\accent"5E#1}\penalty 10000\relax\fi\raise 1\ht7
  \hbox{\lower1.15ex\hbox to 1\wd7{\hss\accent"13\hss}}\penalty 10000
  \hskip-1\wd7\penalty 10000\box7} \def\Dbar{\leavevmode\lower.6ex\hbox to
  0pt{\hskip-.23ex \accent"16\hss}D}
  \def\cfac#1{\ifmmode\setbox7\hbox{$\accent"5E#1$}\else
  \setbox7\hbox{\accent"5E#1}\penalty 10000\relax\fi\raise 1\ht7
  \hbox{\lower1.15ex\hbox to 1\wd7{\hss\accent"13\hss}}\penalty 10000
  \hskip-1\wd7\penalty 10000\box7} \def\cprime{$'$}
  \def\Dbar{\leavevmode\lower.6ex\hbox to 0pt{\hskip-.23ex \accent"16\hss}D}
  \def\cfac#1{\ifmmode\setbox7\hbox{$\accent"5E#1$}\else
  \setbox7\hbox{\accent"5E#1}\penalty 10000\relax\fi\raise 1\ht7
  \hbox{\lower1.15ex\hbox to 1\wd7{\hss\accent"13\hss}}\penalty 10000
  \hskip-1\wd7\penalty 10000\box7} \def\cprime{$'$}
  \def\Dbar{\leavevmode\lower.6ex\hbox to 0pt{\hskip-.23ex \accent"16\hss}D}
  \def\cfac#1{\ifmmode\setbox7\hbox{$\accent"5E#1$}\else
  \setbox7\hbox{\accent"5E#1}\penalty 10000\relax\fi\raise 1\ht7
  \hbox{\lower1.15ex\hbox to 1\wd7{\hss\accent"13\hss}}\penalty 10000
  \hskip-1\wd7\penalty 10000\box7}
  \def\udot#1{\ifmmode\oalign{$#1$\crcr\hidewidth.\hidewidth
  }\else\oalign{#1\crcr\hidewidth.\hidewidth}\fi}
  \def\cfac#1{\ifmmode\setbox7\hbox{$\accent"5E#1$}\else
  \setbox7\hbox{\accent"5E#1}\penalty 10000\relax\fi\raise 1\ht7
  \hbox{\lower1.15ex\hbox to 1\wd7{\hss\accent"13\hss}}\penalty 10000
  \hskip-1\wd7\penalty 10000\box7} \def\Dbar{\leavevmode\lower.6ex\hbox to
  0pt{\hskip-.23ex \accent"16\hss}D}
  \def\cfac#1{\ifmmode\setbox7\hbox{$\accent"5E#1$}\else
  \setbox7\hbox{\accent"5E#1}\penalty 10000\relax\fi\raise 1\ht7
  \hbox{\lower1.15ex\hbox to 1\wd7{\hss\accent"13\hss}}\penalty 10000
  \hskip-1\wd7\penalty 10000\box7} \def\Dbar{\leavevmode\lower.6ex\hbox to
  0pt{\hskip-.23ex \accent"16\hss}D}
  \def\cfac#1{\ifmmode\setbox7\hbox{$\accent"5E#1$}\else
  \setbox7\hbox{\accent"5E#1}\penalty 10000\relax\fi\raise 1\ht7
  \hbox{\lower1.15ex\hbox to 1\wd7{\hss\accent"13\hss}}\penalty 10000
  \hskip-1\wd7\penalty 10000\box7} \def\Dbar{\leavevmode\lower.6ex\hbox to
  0pt{\hskip-.23ex \accent"16\hss}D}
  \def\cfac#1{\ifmmode\setbox7\hbox{$\accent"5E#1$}\else
  \setbox7\hbox{\accent"5E#1}\penalty 10000\relax\fi\raise 1\ht7
  \hbox{\lower1.15ex\hbox to 1\wd7{\hss\accent"13\hss}}\penalty 10000
  \hskip-1\wd7\penalty 10000\box7}

\end{document}